\newcommand{\bu}{\boldsymbol u}
\newcommand{\bv}{\boldsymbol v}
\newcommand{\bw}{\boldsymbol w}
\newcommand{\bz}{\boldsymbol z}
\newcommand{\bx}{\boldsymbol x}
\newcommand{\be}{\boldsymbol e}
\newcommand{\bV}{\boldsymbol V}
\newcommand{\bE}{\boldsymbol E}
\newcommand{\bs}{\boldsymbol s}
\newcommand{\bg}{\boldsymbol g}
\newcommand{\btau}{\boldsymbol \tau}
\newcommand{\ee}{\mathrm e}
\newcommand{\bff}{\boldsymbol f}
\newcommand{\di}{\mathrm d}
\renewcommand{\(}{\left(}
\renewcommand{\)}{\right)}
\newcommand{\esssup}[1]{\mathrm{ess}\sup_{\hspace*{-1.5em} #1}}
\newcommand{\todo}[1]{{\color{red} #1}}
\newtheorem{Theorem}{Theorem}
\newtheorem{lema}{Lemma}
\newcounter{remark}
\def\theremark {\arabic{remark}}
\newenvironment{remark}{\refstepcounter{remark}\par\noindent{\bf Remark\ \theremark}\ }{\par}
\newtheorem{Proof}{Proof}
\newenvironment{proof}{\begin{Proof}\rm}{\hfill $\Box$ \end{Proof}}
\title{Error analysis of BDF schemes for
the evolutionary incompressible Navier--Stokes equations}
\author{Bosco
Garc\'{\i}a-Archilla\thanks{Departamento de Matem\'atica Aplicada
II, Universidad de Sevilla, Sevilla, Spain. Research is supported by
Spanish MCINYU under grants PID2021-123200NB-I00 and PID2022-136550NB-I00. (bosco@esi.us.es)}
\and  Volker John\thanks{Weierstrass Institute for Applied Analysis and Stochastics,
Leibniz Institute in Forschungsverbund Berlin e. V. (WIAS), Mohrenstr. 39, 10117 Berlin, Germany.
Freie Universit\"at of Berlin,
Department of Mathematics and Computer Science,
Arnimallee 6, 14195 Berlin, Germany, ({john@wias-berlin.de}).}
  \and Julia Novo\thanks{Departamento de
Matem\'aticas, Universidad Aut\'onoma de Madrid, Spain. Research is supported
by Spanish MINECO
under grant PID2022-136550NB-I00. (julia.novo@uam.es)}}
\date{\today}
\begin{document}

\maketitle
\abstract{Error bounds for fully discrete schemes for the evolutionary incompressible Navier--Stokes equations are derived in this paper.
For the time integration we apply BDF-$q$ methods, $q\le 5$, for which error bounds for $q\ge 3$ cannot be found in the literature.
Inf-sup stable mixed finite elements are used as spatial approximation. 
First, we analyze the standard Galerkin method and second a grad-div stabilized method. 
The grad-div stabilization allows to prove error bounds with constants independent of inverse powers of the viscosity coefficient. We prove optimal bounds for the velocity and pressure with order $(\Delta t)^q$ in time for the BDF-$q$ scheme and order $h^{k+1}$ for the $L^2(\Omega)$ error of the velocity in the first case and $h^k$ in the second  
case, $k$ being the degree of the polynomials in finite element velocity space.}
\medskip

{\bf Key words} {BDF schemes; evolutionary incompressible Navier--Stokes equations; mixed finite elements; inf-sup stable elements; grad-div stabilization, robust error bounds.}

\section{Introduction}

Despite the extensive literature concerning the finite element analysis of the evolutionary incompressible Navier--Stokes equations there are still topics that deserve research.
On the one hand, there are not many papers in which high order methods in time are analyzed. On the other hand, there are also only few papers in which
error bounds with constants independent of inverse powers of the viscosity are proved, so-called robust bounds. The present paper contributes to both aspects.

In the first part of the paper, we prove a priori bounds for fully discrete methods using
the  BDF-$q$ schemes, $q\le 5$, for the time integration and inf-sup stable mixed finite elements for the spatial discretization. The constants in the error bounds of this
part are not independent of $\nu^{-1}$. However, the ideas and methodology are useful for the second part of the paper in which robust bounds  are obtained adding grad-div stabilization to the spatial discretization. 

Error bounds for first and second order methods in time are available in the literature. In \cite{nos_grad_div}, the error analysis of a fully implicit Euler method combined with mixed finite elements with grad-div stabilization is included, and in \cite{nos_bdf}, the authors prove robust bounds for the same spatial discretization and the implicit-explicit BDF-2 method with variable step. 
It is known that BDF methods of order higher than 2 are not $A$-stable. In \cite{YC25} a semi-implicit BDF-3 method for the two-dimensional Navier--Stokes equations with $H^1$ initial data is analyzed. In this paper, only the time discretization is considered while the method is continuous in space; besides, the
bounds are not robust. With this error analysis one cannot account for restrictions on the time step required in the fully discretization of the method, which become then important in practical simulations. 

The motivation of the present paper comes from the recent reference \cite{bdf_stokes}. In this paper, the authors analyze BDF-1 -- BDF-6 for the transient Stokes problem and both inf-sup stable and non-inf-sup stable finite elements. The main ingredient of the analysis of the temporal discretization in \cite{bdf_stokes} is the $G$-stability of BDF schemes,  see  \cite{dalquis1,dalquis2}, which we apply in the present paper to the Navier--Stokes equations. Applying this technique, one can follow the spirit of the error analysis of lower
order time discretization methods to prove a priori bounds for the fully discrete scheme. The price to pay, compared with first or second order methods, is of course a more complicated error analysis with longer proofs  and some technical difficulties. 

As stated above, we start with the error analysis of the non-robust case, which is the standard Galerkin method, since several of the technical difficulties appear already in this case. As a consequence of bounding the nonlinear term of the Navier--Stokes equations,  we find a (mild) time step restriction for $q\ge 3$ that did
not appear in \cite{bdf_stokes}. Concretely, with $\Delta t$ being the time step and $h$ the spatial mesh width, 
we need to assume $\Delta t\le C h^{d/(2q)}$ for the error bound of the velocity and $\Delta t \le C h^{3/(2q)}$ for the error bound of the pressure, where $d \in \{2,3\}$ is the spatial dimension and $C$ is a generic constant that does not depend on the mesh width.
We prove optimal bounds of order $(\Delta t)^q$ in time and $h^{k+1}$ in space for the $L^2(\Omega)$ norm of the error in the velocity and $h^k$ for the $L^2(\Omega)$ norm of the pressure error.

As discussed in \cite{review_nos}, in the situation of small viscosity coefficients and coarse grids, 
which is of high interest in practice, only robust error bounds provide useful information about 
the behavior of a numerical method for smooth solutions; and being able to derive robust error bounds can be considered 
as a necessary condition for a good performance of a method in more realistic problems. 
To prove a robust bound,  we had to deal with additional technical difficulties. To this end, we 
derive a new $G$-stability result for first order finite differences for the BDF-q method, see 
Lemma~\ref{lema_bosco}. With this new result, we can  bound the error of a sum of first order finite differences, which is an extra term appearing in the error analysis of the robust case. This is the 
essential extension to the error analysis of the non-robust case. 
The techniques from \cite{nos_grad_div} can be utilized to bound 
the nonlinear term using the grad-div stabilization. The restrictions of the time step in the robust case are stronger since we need to assume $\Delta t h^{-2}\le C$. This CFL-type condition was already found for the IMEX-BDF-2 method in \cite{nos_bdf}. As known for grad-div stabilized methods, see \cite{review_nos}, the spatial bound for the $L^2(\Omega)$ error of the velocity in the robust case behaves as $h^k$.

Finally, we want to mention that although the BDF-6 method could be handled following the error analysis 
from \cite{bdf_stokes}, we have decided not to include it. As shown in \cite{bdf_stokes}, the analysis of this case is more difficult and must be considered separately and the present paper is already quite
technical and long.

The outline of the paper is as follows. In Section~\ref{sec:notations}, we introduce the problem and 
state some preliminaries and notations.  Section~\ref{sec:bdf} presents results concerning the BDF-q methods. Some prerequisites of the error analysis, including the new $G$-stability lemma, are collected 
in Section~\ref{sec:4}.  Section~\ref{sec:5} contains the error analysis of the non-robust case and Section~\ref{sec:6} the error analysis of the robust case. Finally, some numerical studies are presented in 
Section~\ref{sec:7}. 

\section{Notations, the Navier--Stokes equations, and the Stokes projection}
\label{sec:notations}

Let $\Omega \subset {\mathbb R}^d$, $d \in \{2,3\}$, be a bounded domain with
polyhedral  and Lipschitz boundary $\partial \Omega$. The incompressible
Navier--Stokes equations model the conservation of linear momentum and the
conservation of mass (continuity equation) by
\begin{align}
\label{NS} \bu_t -\nu \Delta \bu + (\bu\cdot\nabla)\bu + \nabla p &= \bff &&\text{in }\ (0,T)\times\Omega,\nonumber\\
\nabla \cdot \bu &=0&&\text{in }\ (0,T)\times\Omega,\\
\bu(0, \cdot) &= \bu_0(\cdot)&&\text{in }\ \Omega,\nonumber\\
\bu(t,\bx)&=0&&\text{in}\ (0,T)\times \partial \Omega\nonumber,
\end{align}
where $\bu$ is the velocity field, $p$ the kinematic pressure, $\nu>0$ the kinematic viscosity coefficient,
$\bu_0$ a given initial velocity, and $\bff$ represents the external body accelerations acting
on the fluid. 

Throughout the paper we will denote by $W^{s,p}(D)$ the Sobolev space of real valued functions defined on the domain $D\subset\mathbb{R}^d$ with distributional derivatives of order up to $s$ in $L^p(D)$, endowed with the usual norm which is denoted by $\|\cdot\|_{W^{s,p}(D)}$. If $s=0$ we understand that $W^{0,p}(D)=L^p(D)$. As it is standard, $W^{s,p}(D)^d$ will be endowed with the product norm that, if no confusion can arise, will be denoted again by $\|\cdot\|_{W^{s,p}(D)}$. We will distinguish the case  $p=2$ using $H^s(D)$ to denote the space $W^{s,2}(D)$. We will make use of the space $H_0^1(D)$, the closure in $H^1(D)$ of the set of infinitely differentiable functions with compact support
in $D$.  For simplicity,
we use $\|\cdot\|_s$ (resp. $|\cdot |_s$) to denote the norm (resp. seminorm) both in $H^s(\Omega)$ or $H^s(\Omega)^d$. The exact meaning will be clear by the context. The inner product of $L^2(\Omega)$ or $L^2(\Omega)^d$ will be denoted by $(\cdot,\cdot)$ and the corresponding norm by $\|\cdot\|_0$. The norm of the space of essentially bounded functions $L^\infty(\Omega)$ will be denoted by $\|\cdot\|_\infty$. For vector-valued function we will use the same conventions as before.
We represent by $\|\cdot\|_{-1}$  the norm of the dual space of $H^1_0(\Omega)$ which is denoted by $H^{-1}(\Omega)$. As usual, we always identify $L^2(\Omega)$
with its dual so we have $H^1_0(\Omega)\subset L^2(\Omega)\subset H^{-1}(\Omega) $ with compact injection.

Using the function spaces
$
V=H_0^1(\Omega)^d$, and \[ Q=L_0^2(\Omega)=\left\{q\in L^2(\Omega):
(q,1)=0\right\},
\]
the weak formulation of problem (\ref{NS}) is:
Find $(\bu,p):(0,T]\rightarrow  V\times Q$ such that for all $(\bv,q)\in V\times Q$,
\begin{equation*}
(\bu_t,\bv)+\nu (\nabla \bu,\nabla \bv)+((\bu\cdot \nabla) \bu,\bv)-(\nabla \cdot \bv,p)
+(\nabla \cdot \bu,q)=(\boldsymbol f,\bv).
\end{equation*}

The Hilbert space
\[
H^{\rm div}=\left\{ \bu \in L^{2}(\Omega)^d, {\nabla \cdot \bu\in L^2(\Omega)} \ \mid \ \nabla \cdot \bu=0, \,
\bu\cdot \mathbf n|_{\partial \Omega} =0 \right\}\]
will be endowed with the inner product of $L^{2}(\Omega)^{d}$ and the space
\[V^{\rm div}=\left\{ \bu \in V \ \mid \ \nabla \cdot \bu=0 \right\}\]
with the inner product of $V$.

The following Sobolev's embedding \cite{Adams} will be used in the analysis: For  $1\le p<\nicefrac{d}s$
let $q$ be such that $\nicefrac{1}{q}
= \nicefrac{1}{p}-\nicefrac{s}{d}$. There exists a positive constant $C$, independent of $s$, such
that
\begin{equation}\label{sob1}
\|v\|_{L^{q'}(\Omega)} \le C \| v\|_{W^{s,p}(\Omega)}, \quad
q' \le q, 
\quad v \in
W^{s,p}(\Omega).
\end{equation}
If $p=\nicefrac{d}s$, then estimate \eqref{sob1} is valid for $1\le q'<\infty$. 
If $p>\nicefrac{d}s$ the above relation is valid for $q'=\infty$. An analog embedding inequality holds for 
vector-valued functions.

In the error analysis, the Poincar\'{e}--Friedrichs inequality
\begin{equation}\label{eq:poin}
\|\bv\|_{0} \leq
C_P\|\nabla \bv\|_{0},\quad\forall\ \bv\in H_0^1(\Omega)^d,
\end{equation}
will be used.

Let $\{V_h\subset V\}_{h>0}$ and $\{Q_h\subset Q\}_{h>0}$ be two families of finite element
spaces composed of piecewise polynomials of
degrees at most $k$ and $k-1$, respectively, that correspond to a family of partitions $\{\mathcal T_h\}_{h>0}$ of
$\Omega$ into mesh cells with maximal diameter $h$. It will be assumed that
the family of triangulations is quasi-uniform.
In the following, we consider pairs of finite element spaces that satisfy the discrete
inf-sup condition
\begin{equation}\label{LBB}
\inf_{q_h\in Q_h}\sup_{\bv_h\in V_h,\bv_h\neq \boldsymbol 0}\frac{(\nabla \cdot \bv_h,q_h)}{\|\nabla \bv_h\|_0\|q_h\|_0}\ge \beta_0,
\end{equation}
with $\beta_0>0$ being a constant independent of the mesh size $h$. The space of discrete divergence-free functions is denoted by
\[
V_h^{\rm div}=\left\{\bv_h\in V_h\ \mid\ (\nabla\cdot \bv_h,q_h)=0\quad \forall\ q_h\in Q_h \right\}.
\]

The following inverse
inequality holds for each $v_{h} \in V_{h}$, see e.g., \cite[Theorem 3.2.6]{Cia78},
\begin{equation}
\label{inv} \| \bv_{h} \|_{W^{m,p}(K)} \leq {c_{\mathrm{inv}}}
h_K^{n-m-d\left(\frac{1}{q}-\frac{1}{p}\right)}
\|\bv_{h}\|_{W^{n,q}(K)},
\end{equation}
where $0\leq n \leq m \leq 1$, $1\leq q \leq p \leq \infty$ and $h_K$
is the size (diameter) of the mesh cell $K \in \mathcal T_h$.

In the sequel,  $I_h \bu \in V_h$ will denote
the Lagrange interpolant of a continuous function $\bu$. The following bound can be found in \cite[Theorem 4.4.4]{brenner-scot}
\begin{equation}\label{cota_inter}
|\bu-I_h \bu|_{W^{m,p}(K)}\le c_\text{\rm int} h^{n-m}|\bu|_{W^{n,p}(K)},\quad 0\le m\le n\le k+1,
\end{equation}
where $n>d/p$ when $1< p\le \infty$ and $n\ge d$ when $p=1$. 
Denoting by $\pi_h$ the $L^2(\Omega)$ projection of the pressure $p$
onto $Q_h$, we have that for $0\le m\le 1$
\begin{equation}\label{eq:pressurel2}
\|p-\pi_h\|_m\le C h^{l+1-m}\|p\|_{l+1}, \quad \forall\ p\in H^{l+1}(\Omega).
\end{equation}

In the analysis, the Stokes problem
\begin{align}\label{eq:stokes_str}
-\nu \Delta \bv+\nabla q&={\bg}\quad\mbox{in }\Omega, \nonumber\\
\bv&=\boldsymbol 0\quad\mbox{on } \partial \Omega,\\
\nabla \cdot \bv&=0\quad\mbox{in } \Omega,\nonumber
\end{align}
will be considered. Let us denote by
$(\bv_h,q_h)\in V_h \times Q_h$  the mixed finite element approximation to \eqref{eq:stokes_str},
given by
\begin{equation}\label{eq:gal_stokes}
\begin{split}
\nu(\nabla \bv_h,\nabla \bw_h)-(\nabla \cdot \bw_h, q_h)&=({\bg},\bw_h)\quad \forall\ \bw_h\in V_h\\
(\nabla \cdot \bv_h,r_h)&=0\qquad\quad\,\,\,  \forall r_h\in Q_h.
\end{split}
\end{equation}
Following \cite{girrav}, one gets the estimates
\begin{align}\label{eq:cota_stokes_v0}
h^{-1}\|\bv-\bv_h\|_0+\|\bv-\bv_h\|_1&\le  C \left(\inf_{\bw_h\in V_h}\|\bv-\bw_h\|_1+\nu^{-1}\inf_{r_h\in Q_h}\|q-r_h\|_0\right),\\
\|q-q_h\|_0&\le C\left( \nu \inf_{\bw_h\in V_h}\|\bv-\bw_h\|_1+
\inf_{r_h\in Q_h}\|q-r_h\|_0\right).\label{eq:cota_stokes_pre}\end{align}
Let  $(\bu,p)$ be the solution of the Navier--Stokes equations \eqref{NS} with $\bu\in V\cap H^{k+1}(\Omega)^d$,
$p \in Q\cap H^{k}(\Omega)$,
$k\ge 0$. In case 
\begin{equation}\label{eq:stokes_rhs_g_1}
{\bg}={\bff}-\bu_t-(\bu\cdot \nabla)\bu,
\end{equation}
the pair $(\bu,p)$ is also the solution of \eqref{eq:stokes_str} with right-hand side \eqref{eq:stokes_rhs_g_1}. We will denote by $({\bf s}_h,p^s_h)$ its
finite element approximation in $V_h\times Q_h$ (the so-called Stokes projection) that, according to \eqref{eq:cota_stokes_v0} and \eqref{eq:cota_stokes_pre}, satisfies
\begin{eqnarray}\label{eq:cotastokes}
\|\bu-{\bs}_h\|_0+h\|\bu-{\bs}_h\|_1&\le& C h^{k+1}\(\|\bu \|_{k+1}+\|p\|_k\),\\
\|p-p_h^s\|_0&\le& C h^{k}\(\|\bu\|_{k+1}+\|p\|_k\), \nonumber
\end{eqnarray}
where the constant $C$ in \eqref{eq:cotastokes} depends on $\nu^{-1}$.
Assuming enough regularity for the domain $\Omega$ one can also prove that
\begin{eqnarray}\label{sh_menos1}
\|\bu-{\bs}_h\|_{-1}\le C h^{\min(k+2,2k)}\(\|\bu \|_{k+1}+\|p\|_k\).
\end{eqnarray}

Following \cite{chenSiam},
one can also obtain the bound
\begin{align}
\|\bu-{\bs}_h\|_\infty&\le C_\infty h\left(\log(1/h)\right)^{\bar r}\left(\|\nabla \bu\|_{W^{1,\infty}}
+\|p\|_{\infty}\right)\label{cotainfty0},
\end{align}
where  $\bar r=0$ for linear elements and $\bar r=1$ otherwise.
Applying \eqref{cotainfty0}, we obtain that there exists $h_0$ such that for $h\le h_0$ 
\begin{eqnarray}\label{cotasinf}
\|\bs_h\|_\infty&\le& \|\bu\|_\infty+\|\bu-{\bs}_h\|_\infty\le \|\bu\|_\infty+C_\infty h\left(\log(1/h)\right)^{\bar r}\left(\|\nabla \bu\|_{W^{1,\infty}}+\|p\|_{\infty}\right)\nonumber\\
&\le& 2\|\bu\|_\infty.
\end{eqnarray}
Utilizing \eqref{inv} and \eqref{eq:cotastokes} yields
\begin{eqnarray}\label{cotasl2d}
\|\nabla \cdot \bs_h\|_{L^{2d}}&\le&  c_{\mathrm{inv}} h^{-(d-1)/2}\|\nabla \cdot \bs_h\|_0=c_{\mathrm{inv}} h^{-(d-1)/2}\|\nabla \cdot (\bs_h-\bu)\|_0\nonumber\\
&\le& C h^{(3-d)/2}(\|\bu\|_{2}+\|p\|_1)\le C(\|\bu\|_{2}+\|p\|_1).
\end{eqnarray}
Using the 
triangle inequality, the inverse
inequality \eqref{inv}, the interpolation estimate \eqref{cota_inter}, \eqref{eq:cotastokes}, and 
the Sobolev embedding \eqref{sob1}, we get
\begin{eqnarray}\label{cotas2d}
\|\nabla \bs_{h}\|_{L^{2d}} &\le&
c_{\mathrm{inv}}h^{-(d+1)/2}\(\|\bs_h-\bu\|_{0}+\|\bu-I_{h}\bu\|_{0}\)+\|\nabla I_h \bu\|_{L^{2d}}
 \nonumber\\
&\le& C h^{(3-d)/2} \(\|\bu\|_2+\|p\|_1\)+C\|\nabla \bu\|_{L^{2d}}\le C \(\|\bu\|_2+\|p\|_1\).
\end{eqnarray}
Using that $\Omega$ is bounded, we also obtain
\begin{equation}\label{cotas2d_dmenos1}
\|\nabla \bs_{h}\|_{L^{2d/(d-1)}} \le C \|\nabla \bs_{h}\|_{L^{2d}} \le  C \(\|\bu\|_2+\|p\|_1\).
\end{equation}

To get error bounds uniform in $\nu$, in \cite{nos_oseen} a modified Stokes projection was introduced.
We observe that considering  again 
the Stokes problem \eqref{eq:stokes_str} but with
 right-hand side
\begin{equation}\label{eq:stokes_rhs_g}
{\bg}={\bff}- \bu_t-(\bu\cdot \nabla)\bu-\nabla p,
\end{equation}
then $(\bu,0)$ 
 is the solution of  \eqref{eq:stokes_str} with
 right-hand side \eqref{eq:stokes_rhs_g}.
Denoting the corresponding
Galerkin approximation in $V_h\times Q_h$ by $({{\bs}^m_h},l_h)$, one obtains from
\eqref{eq:cota_stokes_v0}--\eqref{eq:cota_stokes_pre}
\begin{align}\label{eq:cotanewpro}
\|\bu-{\bs}_h^m\|_0+h\|\bu-{\bs}^m_h\|_1&\le C h^{k+1}\|\bu \|_{k+1},\\
\label{eq:cotanewpropre}
\|l_h\|_0&\le C \nu h^{k}\|\bu\|_{k+1},
\end{align}
where the constant $C$ does not depend on $\nu$.

Following \cite{chenSiam}, the analogue to bound \eqref{cotainfty0} reads
\begin{align}
\|\bu-{\bs}^m_h\|_\infty&\le C_\infty h\left(\log(\left|\Omega\right|^{1/d}/h)\right)^{\bar r}\|\nabla \bu\|_\infty\label{cotainfty0_ro}.
\end{align}
It is also proved in \cite{chenSiam} that
\begin{align}
\|\nabla (\bu-{\bs}^m_h)\|_\infty&\le C_\infty\|\nabla \bu\|_\infty \label{cotainfty1}.
\end{align}
The constant $C_\infty$ in \eqref{cotainfty0_ro} and \eqref{cotainfty1} does not depend on $\nu$. Arguing as in \eqref{cotasinf},
\eqref{cotasl2d} and \eqref{cotas2d_dmenos1} (applying \eqref{eq:cotanewpro} instead of \eqref{eq:cotastokes}), we get 
\begin{eqnarray}\label{cotas_s_h_mod}
\|{\bs}^m_h\|_\infty\le 2\|\bu\|_\infty,\quad
\|\nabla \cdot{\bs}^m_h\|_{L^{2d}}\le C\|\bu\|_2,\quad
\|\nabla {\bs}^m_h\|_{L^{2d/(d-1)}}\le C\|\bu\|_2,
\end{eqnarray}
with $C$ independent of $\nu$. Finally, from \eqref{cotainfty1}, we can infer that 
\begin{equation}\label{nabla_sh_mod}
\|\nabla {\bs}^m_h\|_\infty\le (C_\infty+1)\|\nabla \bu\|_\infty.
\end{equation}

\section{BDF methods}\label{sec:bdf}

For the temporal integration, we apply a $q$-step backward differentiation formulae (BDF). 
This section reviews those properties of BDF methods that are important for our analysis. 
We use the notations from \cite{bdf_stokes}. 

For $T>0$ we consider a uniform partition of $[0,T]$. 
Let us denote the length of the time step 
by $\Delta t=T/M$ and the time instant by $t_n=n\Delta t$, $n=0,\ldots,M$. 
Then, for $u^n=u(t_n)$, the BDF approximation of order
$q$ to the time derivative $u_t$ is given by
\begin{equation}\label{eq_bdf_q}
\overline \partial_q u^n=\frac{1}{\Delta t}\sum_{i=0}^q \delta_i u^{n-i},\ n\ge q,\quad \mbox{with the relation}\quad \delta(\zeta)=\sum_{i=0}^q \delta_i\zeta^i
=\sum_{l=1}^q\frac{1}{l}(1-\zeta)^l
\end{equation}  
being used for determining the constants of the method. 

We recall the results on $G$-stability for BDF schemes of \cite{bdf_stokes} that allow to use energy estimates 
in the error analysis of the methods. The following lemma is stated and proved in \cite[Lemma 3.6]{bdf_stokes}, see also \cite{dalquis1,dalquis2}.
\begin{lema}[$G$-stability for Hilbert spaces]\label{lema_3_6}
Let $\delta(\zeta)=\sum_{i=0}^q \delta_i \zeta^i$ and $\mu(\zeta)=\sum_{j=0}^q\mu_j\zeta^j$ be polynomials that have no common divisor. Assume that the following condition holds
\begin{equation}\label{con_g}
{\rm Re}\frac{\delta(\zeta)}{\mu(\zeta)}>0 \quad\forall\ \zeta\in {\Bbb C},|\zeta|<1.
\end{equation}
Let $H$ be a Hilbert space with associated norm $\|\cdot\|_H$ and let 
$(\cdot,\cdot)_H$
be a semi-inner product on $H$. Then, there exists a symmetric positive definite
matrix $G=[g_{i,j}]\in {\Bbb R}^{q\times q}$ such that for $v_0,\ldots,v_q\in H$ the following bound holds
\begin{equation*}
{\rm Re}\left(\sum_{i=0}^q \delta_i v^{q-i},\sum_{j=0}^q\mu_j v^{q-j}\right)_H\ge \sum_{i,j=1}^qg_{i,j}\(v^i,v^j\)_H-\sum_{i,j=1}^q g_{i,j}\(v^{i-1},v^{j-1}\)_H.
\end{equation*}
\end{lema}

By using the multiplier technique developed in \cite{nevan_ode}, it is possible to apply the concept 
of $G$-stability to the analysis of $A(\alpha)$-stable methods, like BDF for $q\in\{3,4,5\}$. The following 
lemma can be found in \cite[Lemma 3.7]{bdf_stokes}.

\begin{lema}[$G$-stability and multiplier technique]\label{lem:mult_tech}
For $q=1,\ldots,5$ there exits $0\le \eta_q<1$ such that for $\delta(\zeta)=\sum_{l=1}^q (1/l)(1-\zeta)^l$ and $\mu(\zeta)=1-\eta_q\zeta$  condition \eqref{con_g} holds. The values of $\eta_q$ found 
in \cite{nevan_ode,AK15} are given by 
\begin{equation}\label{eta}
\eta_1=\eta_2=0,\ \eta_3=0.0769,\ \eta_4=0.2878,\ \eta_5=0.8097.
\end{equation}
\end{lema}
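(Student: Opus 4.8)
The plan is to reduce condition \eqref{con_g} to a positivity statement on the boundary of the unit disk and then to verify that statement order by order. Write $g(\zeta)=\delta(\zeta)/\mu(\zeta)$. First I would check that $g$ is holomorphic on a neighbourhood of the closed unit disk: since $0\le\eta_q<1$, the only possible zero of $\mu(\zeta)=1-\eta_q\zeta$ is $\zeta=1/\eta_q$, which satisfies $|1/\eta_q|>1$ (and $\mu$ has no zero at all when $\eta_q=0$). Hence $\mathrm{Re}\,g$ is harmonic on the closed disk. I would also record that $\delta(1)=\sum_{l=1}^q(1/l)(1-1)^l=0$, so that $g(1)=0$ and $g$ is nonconstant; this both identifies a boundary point where $\mathrm{Re}\,g$ vanishes and rules out $g$ being a constant.

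Next I would invoke the minimum principle for harmonic functions. If $\mathrm{Re}\,g\ge 0$ holds on $|\zeta|=1$, then the minimum of $\mathrm{Re}\,g$ over the closed disk is attained on the boundary, so $\mathrm{Re}\,g\ge 0$ throughout. Strictness in the open disk then follows from the strong minimum principle: were $\mathrm{Re}\,g(\zeta_0)=0$ at an interior point $\zeta_0$, this $\zeta_0$ would be an interior global minimum of the harmonic, already nonnegative function $\mathrm{Re}\,g$, forcing $\mathrm{Re}\,g\equiv 0$ and hence $g$ constant, a contradiction. Thus it suffices to establish the boundary inequality, and since on $|\zeta|=1$ one has $\mathrm{Re}\,g=\mathrm{Re}(\delta\overline\mu)/|\mu|^2$ with $|\mu|^2>0$, condition \eqref{con_g} reduces to showing
\[
\mathrm{Re}\big(\delta(\zeta)\overline{\mu(\zeta)}\big)\ge 0\qquad\text{for }\zeta=\ee^{\mathrm{i}\theta},\ \theta\in[-\pi,\pi].
\]

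Finally I would carry out the boundary verification for each $q$. Substituting $\overline{\mu(\ee^{\mathrm{i}\theta})}=1-\eta_q\ee^{-\mathrm{i}\theta}$ together with the closed form of $\delta$, and using $\ee^{\mathrm{i}\theta}+\ee^{-\mathrm{i}\theta}=2\cos\theta$, the left-hand side becomes a trigonometric polynomial that can be rewritten as an algebraic polynomial in $c=\cos\theta\in[-1,1]$; nonnegativity on $[-1,1]$ is then the statement to be checked. For $q\in\{1,2\}$ one has $\eta_q=0$ and the inequality is exactly the classical $A$-stability of BDF-1 and BDF-2, i.e. $\mathrm{Re}\,\delta(\ee^{\mathrm{i}\theta})\ge 0$. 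For $q\in\{3,4,5\}$ the BDF polynomial is only $A(\alpha)$-stable, so a genuinely nonzero multiplier is needed, and here I would insert the explicit values in \eqref{eta}. \emph{The main obstacle is precisely this last step}: for $q\in\{3,4,5\}$ the resulting polynomial inequality in $c$ has no transparent closed form and its nonnegativity is delicate near $\theta=0$, where $\delta$ vanishes to high order, so the sharp admissible range of $\eta_q$ must be pinned down by the careful multiplier estimates of \cite{nevan_ode,AK15}. For this reason the statement is quoted from \cite[Lemma 3.7]{bdf_stokes} rather than reproved from scratch.
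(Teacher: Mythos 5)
The paper offers no proof of this lemma: it is quoted directly from \cite[Lemma 3.7]{bdf_stokes}, which in turn rests on \cite{nevan_ode,AK15}, so there is no in-paper argument to match yours against line by line. Your reduction is correct and is the standard route. Since $0\le\eta_q<1$, the function $g=\delta/\mu$ is holomorphic on a neighbourhood of the closed unit disk, the minimum principle for $\mathrm{Re}\,g$ converts the open-disk condition \eqref{con_g} into the boundary inequality $\mathrm{Re}\bigl(\delta(\ee^{\mathrm{i}\theta})\overline{\mu(\ee^{\mathrm{i}\theta})}\bigr)\ge 0$ (using $|\mu|^2>0$ on the circle), and strictness in the interior follows from the strong minimum principle once $g$ is nonconstant. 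One small imprecision: $g(1)=0$ by itself does not rule out $g$ being constant (a priori it could be identically zero); you should add that $g(0)=\delta(0)=\sum_{l=1}^q 1/l>0$, which settles nonconstancy — a one-line fix. The genuinely hard content, namely that the boundary inequality holds for the specific multipliers $\eta_3=0.0769$, $\eta_4=0.2878$, $\eta_5=0.8097$ (and fails for $\mu\equiv1$ when $q\ge3$, since BDF-$q$ is then only $A(\alpha)$-stable), you defer to \cite{nevan_ode,AK15}; this is exactly what the paper does, so your proposal is, if anything, more detailed than the paper's treatment. If you wished to close that last step yourself, the paper's Appendix (proof of Lemma~\ref{lema_bosco}) shows the template: rewrite the real part on the circle as a polynomial in $c=\cos\theta$ and minimize it over $[-1,1]$.
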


As in \cite{bdf_stokes}, we define a so-called $G$-norm associated with the semi-inner product $(\cdot,\cdot)_H$ on
$H$. Given
$\bV^n=[v^n,\ldots,v^{n-q+1}]$ with $v^{n-i+1}\in H$, $i=1,\ldots,q$, we define
\begin{equation}\label{norma_G}
\left|\bV^n\right|_G^2=\sum_{i,j=1}^q g_{i,j}\left(v^{n-i+1},v^{n-j+1}\right)_H,
\end{equation}
where $G$ is the matrix from Lemma~\ref{lema_3_6}. In fact, $|\cdot|_G$ is a seminorm on $H^q$. 
Since $G$ is symmetric positive definite, the following inequality holds
\begin{equation}\label{equi_nor}
\lambda_{\rm min}\left|v^n\right|_H^2\le \lambda_{\rm min}\sum_{j=1}^q\left|v^{n-j+1}\right|_H^2\le \left|\bV^n\right|_G^2\le \lambda_{\rm max} \sum_{j=1}^q\left|v^{n-j+1}\right|_H^2,
\end{equation}
where $|\cdot|_H$ is
the semi-norm induced by the semi-inner product $(\cdot,\cdot)$ and 
$\lambda_{\rm min}$ and $\lambda_{\rm max}$ denote the smallest and largest eigenvalues of $G$, respectively.

Finally, we introduce a time-discrete norm by 
\[
\|v\|_{l^2(0,T;H)} = \(\Delta t \sum_{n=0}^M \|v\|_H^2\)^{1/2}.
\]

\section{Error analysis of the method. Preliminary results}\label{sec:4}

The fully discrete method reads as follows.
 Given $\bu_h^0,\ldots,\bu_h^{q-1}$, solve for $q\le n\le M$ and for all $(\bv_h,q_h)\in V_h\times Q_h$ 
 \begin{eqnarray}\label{eq:bdfq_grad_div}
(\overline \partial_q \bu_h^n,\bv_h)+\nu(\nabla \bu_h^{n},\nabla \bv_h)+b(\bu_h^n,\bu_h^n,\bv_h)
-(p_h^{n},\nabla \cdot \bv_h)&&\nonumber\\
+(\nabla\cdot \bu_h^{n},q_h)+\mu(\nabla \cdot \bu_h^{n},\nabla \cdot \bv_h)& =&(\boldsymbol f^{n},\bv_h),
\end{eqnarray} 
where $\mu$ is the grad-div parameter and the notation $\boldsymbol f^{n}$ means $\boldsymbol f(t_{n})$. The discrete convective term is 
given by 
\[
b\(\bu,\bv,\bw\)=\(B(\bu,\bv),\bw\)\quad \forall\ \bu,\bv,\bw\in V,
\]
where,
\[
B(\bu,\bv)=(\bu\cdot \nabla )\bv+\frac{1}{2}(\nabla  \cdot\bu)\bv\quad \forall\ \bu,\bv\in V.
\]
With this definition, we have the well known property
\begin{equation}
\label{eq:skew}
b(\bu,\bv,\bw)= - b(\bu,\bw,\bv)\quad \forall\ \bu,\bv,\bw\in V,
\end{equation}
such that, in particular, $b(\bu,\bw,\bw) = 0$ for all $\bu, \bw\in V$. 
We will apply the following lemma to bound the nonlinear term.
\begin{lema}[Bounds for the convective term]\label{nonli_bound} Let $\bv, \bw, \bz\in V$, then the following bounds hold
\begin{eqnarray}
|b(\bv,\bv,\bw)|&\le& 2\|\bw\|_{\infty}\|\nabla \bv\|_0\|\bv\|_0,\label{cota_non_1}\\
|b(\bv,\bw,\bz)|&\le& \left(C\|\nabla \bw\|_{L^{2d/(d-1)}}+\|\bw\|_\infty\right)\|\nabla \bv\|_0\|\bz\|_0,\label{cota_non_2}\\
|b(\bv,\bw,\bz)|&\le& \left(\|\bv\|_\infty+C\|\nabla \cdot \bv\|_{L^{2d/(d-1)}}\right)\|\nabla \bw\|_0\|\bz\|_0\label{cota_non_3},\\
|b(\bv,\bw,\bz)|&\le& C(\|\nabla \bw\|_{L^{2d}}+\|\bw\|_\infty)\|\bv\|_0\|\nabla \bz\|_0,\label{cota_non_4}\\
|b(\bv,\bw,\bz)|&\le& C\|\nabla \bv\|_0\|\nabla \bw\|_0\|\nabla\bz\|_0.\label{cota_non_5}
\end{eqnarray}	
\end{lema}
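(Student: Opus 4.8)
The plan is to reduce all five inequalities to Hölder's inequality applied to the two scalar integrals that make up the trilinear form, followed by the Sobolev embedding \eqref{sob1} and the Poincaré--Friedrichs inequality \eqref{eq:poin} to turn every $L^p$ norm of a function in $V$ into a gradient $L^2$ norm. Writing out the definition of $b$ gives the direct form
\[
b(\bv,\bw,\bz)=((\bv\cdot\nabla)\bw,\bz)+\tfrac12((\nabla\cdot\bv)\bw,\bz),
\]
and combining this with the skew-symmetry \eqref{eq:skew} (using that $((\nabla\cdot\bv)\bw,\bz)$ is symmetric in $\bw$ and $\bz$) gives the skew form
\[
b(\bv,\bw,\bz)=\tfrac12((\bv\cdot\nabla)\bw,\bz)-\tfrac12((\bv\cdot\nabla)\bz,\bw).
\]
The only embeddings I will need are $\|\bphi\|_{L^{2d}}\le C\|\nabla\bphi\|_0$ and $\|\bphi\|_{L^{2d/(d-1)}}\le C\|\nabla\bphi\|_0$ for $\bphi\in V$, both valid for $d\in\{2,3\}$, together with $\|\nabla\cdot\bv\|_0\le C\|\nabla\bv\|_0$.

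I would prove \eqref{cota_non_1}, \eqref{cota_non_2}, \eqref{cota_non_3} and \eqref{cota_non_5} from the direct form. For \eqref{cota_non_1} both terms are estimated with $\bw$ taken in $L^\infty$ and $\bv$ split by Cauchy--Schwarz, the grad-div term contributing the extra factor $\tfrac12$, which yields the constant $2$. For \eqref{cota_non_2} the convective term is treated by Hölder with exponents $(2d,\,2d/(d-1),\,2)$ on $(\bv,\nabla\bw,\bz)$, so that $\bv\in L^{2d}$ is controlled by $\|\nabla\bv\|_0$ while the middle factor produces $\|\nabla\bw\|_{L^{2d/(d-1)}}$; the grad-div term supplies the $\|\bw\|_\infty$ contribution. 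Estimate \eqref{cota_non_3} is the companion case: the convective term is taken with $\bv\in L^\infty$, and it is the grad-div term, bounded by Hölder with exponents $(2d/(d-1),\,2d,\,2)$ on $(\nabla\cdot\bv,\bw,\bz)$, that generates the $\|\nabla\cdot\bv\|_{L^{2d/(d-1)}}$ term. Finally \eqref{cota_non_5} is the classical bound, obtained from Hölder with $(2d,\,2,\,2d/(d-1))$ on the convective term and an analogous split on the grad-div term.

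The one estimate that does not go through on the direct form is \eqref{cota_non_4}, and this is where the main care is needed. Its target factor is $\|\bv\|_0$, but the direct form contains $\nabla\cdot\bv$, which cannot be absorbed into $\|\bv\|_0$; I would therefore use the skew form, which eliminates the divergence altogether. The first term $\tfrac12((\bv\cdot\nabla)\bw,\bz)$ is then split by Hölder with exponents $(2,\,2d,\,2d/(d-1))$ on $(\bv,\nabla\bw,\bz)$, producing $\|\bv\|_0\|\nabla\bw\|_{L^{2d}}\|\nabla\bz\|_0$, and the second term $\tfrac12((\bv\cdot\nabla)\bz,\bw)$ is bounded directly with $\bw$ in $L^\infty$. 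The principal obstacle throughout is thus bookkeeping rather than depth: for each bound one must choose the representation of $b$ that keeps the divergence term in a controllable place, and then pick the unique Hölder triple whose reciprocals sum to one and whose factors each land in a space dominated by the prescribed norms. No single step is hard, but the choice of direct versus skew form for the grad-div contribution is exactly what makes all five estimates close.
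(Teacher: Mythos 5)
Your proposal is correct and follows essentially the same route as the paper, whose proof is exactly the one-line recipe you expand: H\"older's inequality, the Sobolev embedding \eqref{sob1} combined with Poincar\'e--Friedrichs, the bound $\|\nabla\cdot\bv\|_0\le\|\nabla\bv\|_0$, and, for \eqref{cota_non_4}, integration by parts of the divergence term --- which is precisely what produces the skew form $\tfrac12((\bv\cdot\nabla)\bw,\bz)-\tfrac12((\bv\cdot\nabla)\bz,\bw)$ you use there. All your H\"older exponent triples check out for $d\in\{2,3\}$, so there is nothing to add.
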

\begin{proof}
The proof is performed in a standard way by applying H\"older's inequality, the bound $\|\nabla \cdot \bv\|_0\le \|\nabla \bv\|_0$ for any $\bv\in H_0^1(\Omega)^d$, e.g., see \cite[(3.41)]{volker_book}, the Sobolev embedding \eqref{sob1}, and the
Poincar\'e--Friedrichs inequality \eqref{eq:poin}. To prove \eqref{cota_non_4}, the divergence term is integrated by parts.
\end{proof}

For the error analysis we will apply the following discrete Gronwall inequality taken from \cite[Lemma 3.2]{nos_bdf}.
\begin{lema}[Discrete Gronwall lemma I]\label{gronwall2}
Let $B,a_j,b_j,c_j,\gamma_j,\delta_j$ be nonnegative numbers such that
\begin{equation}\label{eq:gronwall1}
a_n+\sum_{j=0}^n b_j\le \gamma_n a_n +\sum_{j=0}^{n-1} (\gamma_j+\delta_j) a_j+\sum_{j=0}^n c_j+B,\quad for \quad n\ge 0.
\end{equation}
Suppose that $\gamma_j<1$ for all j, and set $\sigma_j=(1-\gamma_j)^{-1}$, then 
\[
a_n+\sum_{j=0}^nb_j\le \exp\left(\sigma_n \gamma_n+\sum_{j=0}^{n-1}(\sigma_j\gamma_j+\delta_j)\right)\left\{\sum_{j=0}^nc_j+B\right\}, \quad for\quad  n\ge 0.
\]
\end{lema}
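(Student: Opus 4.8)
The plan is to reduce the inequality to a single scalar estimate and then close it by a telescoping induction. First I would abbreviate $A_n=a_n+\sum_{j=0}^n b_j$ and $S_n=\sum_{j=0}^n c_j+B$, noting that $S_n$ is nondecreasing in $n$ since $c_j,B\ge 0$. Moving the term $\gamma_n a_n$ in \eqref{eq:gronwall1} to the left and using $0<1-\gamma_n\le 1$ gives $(1-\gamma_n)A_n\le(1-\gamma_n)a_n+\sum_{j=0}^n b_j\le\sum_{j=0}^{n-1}(\gamma_j+\delta_j)a_j+S_n$. Dividing by $1-\gamma_n$, writing $\sigma_n=(1-\gamma_n)^{-1}\ge 1$, and bounding $a_j\le A_j$ yields the recursion
\begin{equation*}
A_n\le \sigma_n\sum_{j=0}^{n-1}(\gamma_j+\delta_j)A_j+\sigma_n S_n.
\end{equation*}

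The heart of the argument is the elementary inequality $\sigma_m\le \exp(\sigma_m\gamma_m)$, valid for $0\le\gamma_m<1$; it follows from $-\log(1-t)\le t/(1-t)$ on $[0,1)$, which is immediate by comparing values and derivatives at $t=0$. Together with $1+\delta_m\le \exp(\delta_m)$, this is all the scalar input the proof requires.

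With these tools I would establish $A_n\le E_nS_n$ by strong induction on $n$, where $E_n=\exp\big(\sigma_n\gamma_n+\sum_{j=0}^{n-1}(\sigma_j\gamma_j+\delta_j)\big)$. The base case $n=0$ has empty history and reduces to $A_0\le\sigma_0S_0\le E_0S_0$ via $\sigma_0\le\exp(\sigma_0\gamma_0)$. For the inductive step, substituting $A_j\le E_jS_j$ ($j<n$) into the recursion and using $S_j\le S_n$ gives $A_n\le\sigma_nS_n\big(1+\sum_{j=0}^{n-1}(\gamma_j+\delta_j)E_j\big)$. Since $\sigma_n\le\exp(\sigma_n\gamma_n)$, it then suffices to show $1+\sum_{j=0}^{n-1}(\gamma_j+\delta_j)E_j\le \exp\big(\sum_{j=0}^{n-1}(\sigma_j\gamma_j+\delta_j)\big)$. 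Setting $\Sigma_m=\sum_{j=0}^{m-1}(\sigma_j\gamma_j+\delta_j)$, I would prove this as a telescoping estimate: it is enough to verify the per-index bound $\exp(\Sigma_{m+1})-\exp(\Sigma_m)\ge(\gamma_m+\delta_m)E_m$, because summing over $0\le m\le n-1$ collapses the left-hand side to $\exp(\Sigma_n)-1$. After dividing by $\exp(\Sigma_m)$, the per-index bound reduces (using $1+\delta_m\le\exp(\delta_m)$) exactly to $\exp(\sigma_m\gamma_m)(1-\gamma_m)\ge 1$, i.e.\ once more to $\sigma_m\le\exp(\sigma_m\gamma_m)$.

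The main obstacle is conceptual rather than computational: isolating $a_n$ forces the prefactor $\sigma_n$ onto the entire right-hand side, including the history terms, whereas the claimed bound carries only the index-dependent $\sigma_j$ inside the exponent. The telescoping identity is precisely what reconciles this, since each past contribution $A_j$ already carries its accumulated weight $E_j$, which absorbs the apparent over-counting. Care must also be taken that $\gamma_j<1$ is used at every index (so that all $\sigma_j$ are well defined and $\ge 1$) and that the monotonicity $S_j\le S_n$ is invoked only after the nonnegativity of the $c_j$ and $B$ has been recorded.
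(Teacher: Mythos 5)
Your proof is correct. Note, however, that the paper itself does not prove this lemma: it is quoted verbatim from \cite[Lemma 3.2]{nos_bdf}, so there is no in-paper argument to compare against. Your reduction to the recursion $A_n\le \sigma_n\sum_{j=0}^{n-1}(\gamma_j+\delta_j)A_j+\sigma_n S_n$, the key scalar inequality $\sigma_m\le\exp(\sigma_m\gamma_m)$ (equivalently $-\log(1-t)\le t/(1-t)$), and the telescoping verification $\exp(\Sigma_{m+1})-\exp(\Sigma_m)\ge(\gamma_m+\delta_m)E_m$ all check out, including the base case $n=0$ and the use of the monotonicity of $S_n$; this is essentially the standard Heywood--Rannacher-type induction for discrete Gronwall lemmas, organized cleanly, and it constitutes a complete self-contained proof of the cited result.
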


We will also use the following discrete Gronwall inequality whose proof can be found in \cite{hr4}.
\begin{lema}[Discrete Gronwall lemma II]\label{gronwall}
Let $k,B,a_j,b_j,c_j,\gamma_j$ be nonnegative numbers such that
$$
a_n+k\sum_{j=0}^n b_j\le k\sum_{j=0}^n \gamma_j a_j+k\sum_{j=0}^n c_j+B,\quad for \quad n\ge 0.
$$
Suppose that $k\gamma_j<1$, for all j, and set $\sigma_j=(1-k\gamma_j)^{-1}$. Then
$$
a_n+k\sum_{j=0}^nb_j\le \exp\left(k\sum_{j=0}^n\sigma_j\gamma_j\right)\left\{k\sum_{j=0}^nc_j+B\right\}, \quad for\quad  n\ge 0.
$$
\end{lema}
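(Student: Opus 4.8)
The plan is to prove the bound by isolating the diagonal term $k\gamma_n a_n$ on the right-hand side and reducing the inequality to a scalar first-order recursion for the right-hand side itself. Write $C_n := k\sum_{j=0}^n c_j + B$, which is nondecreasing in $n$ because the $c_j$ are nonnegative, and denote by
\[
D_n := k\sum_{j=0}^n \gamma_j a_j + C_n
\]
the full right-hand side of the hypothesis, so that the assumption reads $a_n + k\sum_{j=0}^n b_j \le D_n$; in particular $a_n \le D_n$ since the $b_j$ are nonnegative.

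First I would derive a recursion for $D_n$. Subtracting consecutive values gives $D_n - D_{n-1} = k\gamma_n a_n + kc_n$, and using $a_n \le D_n$ together with $\gamma_n \ge 0$ yields $D_n - D_{n-1} \le k\gamma_n D_n + kc_n$. Because $k\gamma_n < 1$, the factor $1-k\gamma_n$ is positive and we may divide by it, obtaining
\[
D_n \le \sigma_n D_{n-1} + \sigma_n k c_n, \qquad \sigma_n = (1-k\gamma_n)^{-1} \ge 1.
\]
The base case follows from the hypothesis at $n=0$, which gives $(1-k\gamma_0)D_0 \le C_0$, i.e.\ $D_0 \le \sigma_0 C_0$.

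Next I would iterate this recursion down to $n=0$, producing the product form
\[
D_n \le \Big(\prod_{j=0}^n \sigma_j\Big) C_0 + \sum_{i=1}^n \Big(\prod_{j=i}^n \sigma_j\Big) k c_i.
\]
The decisive step, and the only place where any genuine estimate enters, is converting the products of $\sigma_j$ into the exponential in the claim. Here I would use the algebraic identity $\sigma_j = 1 + k\sigma_j\gamma_j$ (immediate from $\sigma_j(1-k\gamma_j)=1$) together with $1+x \le \exp(x)$ to get the pointwise bound $\sigma_j \le \exp(k\sigma_j\gamma_j)$; since every $\sigma_j \ge 1$, each partial product is dominated by the full product, so $\prod_{j=0}^n \sigma_j \le \exp\big(k\sum_{j=0}^n \sigma_j\gamma_j\big)$. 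Substituting and collecting the $c_i$ terms via $C_0 + k\sum_{i=1}^n c_i = C_n$ gives $D_n \le \exp\big(k\sum_{j=0}^n \sigma_j\gamma_j\big)\,C_n$, and the claim follows from $a_n + k\sum_{j=0}^n b_j \le D_n$.

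The main obstacle is essentially bookkeeping rather than analysis: one must check that absorbing $k\gamma_n a_n$ into $D_n$ is legitimate (which needs $a_n \le D_n$, hence the nonnegativity of the $b_j$), that $\sigma_j \ge 1$ so that the shorter partial products are controlled by the full product, and that the identity $\sigma_j = 1 + k\sigma_j\gamma_j$ reproduces exactly the $\sigma_j\gamma_j$ appearing in the stated exponent. An equivalent route is a direct induction on $n$ with inductive hypothesis $D_n \le \exp\big(k\sum_{j=0}^n\sigma_j\gamma_j\big)C_n$, using the same recursion and the same $1+x\le \exp(x)$ estimate in the inductive step.
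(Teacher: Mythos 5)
Your proof is correct. The paper does not prove this lemma itself but only cites \cite{hr4}; your argument --- absorbing the diagonal term via $a_n\le D_n$, dividing by the positive factor $1-k\gamma_n$, iterating the resulting one-step recursion, and converting the product of the $\sigma_j$ into the exponential through the identity $\sigma_j=1+k\sigma_j\gamma_j$ and $1+x\le \exp(x)$ --- is the standard proof of this classical discrete Gronwall inequality and matches the cited one.
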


We will need the following auxiliary lemma for the error analysis of the robust case. The proof can be found in the appendix.

\begin{lema}[$G$-stability for first order finite differences]\label{lema_bosco}
Let $H$ be a Hilbert space with inner product $(\cdot,\cdot)_H$ and induced norm $\|\cdot\|_H$. 
Let $v^{n-q},\ldots, v^{n} \in H$ and denoting by 
$D v^n=v^n-v^{n-1}$ and so on, then for $q\in\{3,4,5\}$ there exists a constant $s_q>0$ and a symmetric positive definite matrix $G^q$ such that the following bound holds
\begin{equation}\label{cota_Aesta}
\Delta t \(\overline \partial_q v^n,D v^n\)_H\ge  s_q\left\|D v^n\right\|_{{H}}^2+\left\|D\bV^n\right\|_{G^q}^2-\left\|D\bV^{n-1}\right\|_{G^q}^2,
\end{equation}
where, according to \eqref{norma_G}
\[
\left|D\bV^n\right|_{G^q}^2=\sum_{i,j=1}^q g^q_{i,j}\left(Dv^{n-i+1},Dv^{n-j+1}\right)_H.
\]
\end{lema}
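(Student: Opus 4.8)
The plan is to reduce the statement to the standard $G$-stability theory of Lemma~\ref{lema_3_6}, but applied to the sequence of \emph{first differences} rather than to the iterates themselves. Writing $\zeta$ for the backward shift ($\zeta v^n=v^{n-1}$) and setting $w^n=Dv^n=(1-\zeta)v^n$, the first step is the algebraic identity
\[
\Delta t\,\overline\partial_q v^n=\delta(\zeta)v^n=\tilde\delta(\zeta)w^n,\qquad \tilde\delta(\zeta):=\frac{\delta(\zeta)}{1-\zeta}=\sum_{m=0}^{q-1}\frac{1}{m+1}(1-\zeta)^m,
\]
which is legitimate because $\delta(1)=0$, so $(1-\zeta)$ divides $\delta(\zeta)$ and $\tilde\delta$ is a genuine polynomial of degree $q-1$. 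Consequently $\Delta t(\overline\partial_q v^n,Dv^n)_H=(\tilde\delta(\zeta)w^n,w^n)_H$, and the whole question is recast as a $G$-stability estimate for the symbol $\tilde\delta$ paired with the trivial multiplier $\mu\equiv 1$, acting on the shifted sequence $(w^n)$. Note that the coprimality required by Lemma~\ref{lema_3_6} forces this reduction: $\delta$ and $1-\zeta$ share the factor $1-\zeta$, whereas $\tilde\delta$ and $1$ are coprime.

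The second, and main, step is to establish the positivity that powers the energy argument, namely that there is a constant $m_q>0$ with $\mathrm{Re}\,\tilde\delta(\zeta)\ge m_q$ for all $|\zeta|\le 1$. Since $\tilde\delta$ is a polynomial, $\mathrm{Re}\,\tilde\delta$ is harmonic and attains its minimum on the boundary $\zeta=e^{i\theta}$; there, using $1-e^{i\theta}=2\sin(\theta/2)e^{i(\theta-\pi)/2}$, the quantity $\mathrm{Re}\,\tilde\delta$ reduces to a one–variable polynomial in $\cos\beta$ with $\beta=(\theta-\pi)/2$, and the claim becomes an elementary inequality to be checked for each of the three cases $q\in\{3,4,5\}$. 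This is in sharp contrast with the symbol $\delta$ of the non-$A$-stable BDF-$q$ methods, for which $\mathrm{Re}\,\delta>0$ fails and the Nevanlinna--Odeh multiplier of Lemma~\ref{lem:mult_tech} is indispensable; the division by $1-\zeta$ restores positivity with $\mu\equiv 1$. I expect this positivity bound to be the crux of the proof.

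With $m_q>0$ in hand I would fix $s_q\in(0,m_q)$, so that $\mathrm{Re}(\tilde\delta(\zeta)-s_q)\ge m_q-s_q>0$ on the closed disk, and observe that $\tilde\delta-s_q$ and $\mu\equiv 1$ remain coprime. Applying Lemma~\ref{lema_3_6} to the pair $(\tilde\delta-s_q,\,1)$ on the sequence $(w^n)$ then yields a symmetric positive definite matrix and a telescoping bound of the form $((\tilde\delta-s_q)(\zeta)w^n,w^n)_H\ge |W^n|_{G}^2-|W^{n-1}|_{G}^2$, where $W^n$ collects the relevant consecutive differences. Adding back $s_q\|w^n\|_H^2$ and recalling $w^n=Dv^n$ gives precisely the right-hand side of \eqref{cota_Aesta}, once the matrix is identified with $G^q$ as discussed next.

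The one bookkeeping subtlety to settle is dimensional: since $\tilde\delta$ has degree $q-1$, the matrix delivered directly by Lemma~\ref{lema_3_6} is naturally $(q-1)\times(q-1)$, whereas the statement records a $q\times q$ matrix $G^q$ acting on $D\bV^n=[Dv^n,\dots,Dv^{n-q+1}]$. I would reconcile this by embedding the $(q-1)$-dimensional form into the $q$-dimensional one and verifying, case by case for $q\in\{3,4,5\}$, that the resulting $G^q$ can be taken symmetric positive definite while preserving both the telescoping identity and the sign of the leftover terms. This explicit completion, together with the boundary positivity estimate of the second step, is the technical content that I would defer to the appendix.
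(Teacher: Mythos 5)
Your proposal follows essentially the same route as the paper's appendix: factoring $\delta(\zeta)=(1-\zeta)\tilde\delta(\zeta)$ so that the pairing becomes $(\tilde\delta(\zeta)Dv^n,Dv^n)_H$, verifying $\mathrm{Re}\,\tilde\delta(e^{i\theta})\ge m_q>0$ by an elementary one-variable computation for each $q\in\{3,4,5\}$ (the paper's $m_q=1-\sigma_q$ with $\sigma_3=1/96$, $\sigma_4\approx 0.1859$, $\sigma_5\approx 0.8145$), and then splitting off $s_q\|Dv^n\|_H^2$ before invoking the equivalence of $A$- and $G$-stability to obtain the telescoping part. The dimensional mismatch you flag is genuine but is present in the paper itself, whose proof produces a $(q-1)\times(q-1)$ matrix while the statement records a $q\times q$ one; this is harmless for the way the lemma is subsequently used.
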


\section{Error analysis of the method. Non-robust case}\label{sec:5}
In this section we consider the fully discrete error analysis of the plain Galerkin method. To this end, we take
in \eqref{eq:bdfq_grad_div} $\mu=0$.
\subsection{Estimate for the velocity}

\begin{Theorem}[Non-robust error bound for the velocity]\label{thm:velo}
Assume that 
\begin{eqnarray*}\label{eq:regu}
\bu \in L^\infty(0,T;H^2(\Omega)^d) \cap l^2(0,T;H^{k+1}(\Omega)^d), \
\bu_t\in  H^{q+1}(0,T;H^1(\Omega)^d) \cap l^2(0,T;H^k(\Omega)^d),\nonumber\\
p \in L^\infty(0,T;H^1(\Omega)) \cap l^2(0,T;H^{k}(\Omega)),\
p_t \in H^{q+1}(0,T;L^2(\Omega))\cap l^2(0,T;H^{k-1}(\Omega)).\nonumber
\end{eqnarray*}
Let the time step be sufficiently small, where the concrete restrictions are given in \eqref{cond_1t}, \eqref{cond_t2}, 
and \eqref{cond_ht} below in the proof. In addition, it will be assumed the initial conditions are computed sufficiently accurately, concretely, 
that there exits a constant $C_{\mathrm{ic}}>0$ such that
\begin{equation}\label{ini}
\sum_{n=0}^{q-1} \|\bs_h^n-\bu_h^n\|_0^2\le C_{\mathrm{ic}} \(h^{2k+2}+(\Delta t)^{2q}\),
\end{equation}
where $\bs_h^n$ is the velocity component of the Stokes projection. Then the following 
estimate of the velocity error holds
\begin{equation}\label{eq:velo_est}
\left\|\bu(T) - \bu_h^M\right\|_0^2 \le
C \(h^{2k+2}+(\Delta t)^{2q}\),\ \Delta t \nu\sum_{n=q}^M\left\|\nabla \(\bu(t_n) - \bu_h^n\)\right\|_0^2\le
C \(h^{2k}+(\Delta t)^{2q}\),
\end{equation}
where $k$ is the degree of the piecewise polynomials of the velocity space and the  constant
on the right-hand side of \eqref{eq:velo_est} depends on $\nu^{-1}$ and on some norms of the velocity and pressure.  
\end{Theorem}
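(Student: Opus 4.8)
The plan is to split the velocity error with the help of the Stokes projection, writing $\bu^n-\bu_h^n=\boldsymbol\eta^n+\be_h^n$, where $\boldsymbol\eta^n=\bu^n-\bs_h^n$ is the projection error controlled by \eqref{eq:cotastokes} and \eqref{sh_menos1}, and $\be_h^n=\bs_h^n-\bu_h^n\in V_h^{\rm div}$ is the discrete error to be estimated. Since both $\bs_h^n$ and $\bu_h^n$ are discretely divergence free, I may test with functions in $V_h^{\rm div}$, so the pressure drops out. Subtracting the discrete scheme \eqref{eq:bdfq_grad_div} (with $\mu=0$) from the identity satisfied by the Stokes projection, and using $B(\bu^n,\bu^n)=(\bu^n\cdot\nabla)\bu^n$ because $\bu^n$ is divergence free, I obtain an error equation
\[
(\overline\partial_q\be_h^n,\bv_h)+\nu(\nabla\be_h^n,\nabla\bv_h)=(\btau^n-\overline\partial_q\boldsymbol\eta^n,\bv_h)+b(\bu_h^n,\bu_h^n,\bv_h)-b(\bu^n,\bu^n,\bv_h),
\]
valid for all $\bv_h\in V_h^{\rm div}$, where $\btau^n=\overline\partial_q\bu^n-\bu_t(t_n)$ is the BDF-$q$ consistency error.

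The core is an energy estimate via the multiplier technique. Following Lemma~\ref{lem:mult_tech} I test with $\bv_h=\be_h^n-\eta_q\be_h^{n-1}$ and sum over $n$. For the discrete time derivative, Lemma~\ref{lema_3_6} applied with $H=L^2(\Omega)^d$, $\delta(\zeta)$, and $\mu(\zeta)=1-\eta_q\zeta$ turns $\Delta t\sum_n(\overline\partial_q\be_h^n,\be_h^n-\eta_q\be_h^{n-1})$ into a telescoping sum, leaving $\left|\bE^M\right|_G^2-\left|\bE^{q-1}\right|_G^2$ with $\bE^n=[\be_h^n,\ldots,\be_h^{n-q+1}]$; by \eqref{equi_nor} the first term dominates $\|\be_h^M\|_0^2$ and the second is controlled by \eqref{ini}. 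For the viscous term, $\nu(\nabla\be_h^n,\nabla(\be_h^n-\eta_q\be_h^{n-1}))$ summed over $n$ produces, because $\eta_q<1$, a coercive contribution $c\,\nu\,\Delta t\sum_n\|\nabla\be_h^n\|_0^2$ up to boundary terms at $n=q-1,M$. This is where the $G$-stability results of Section~\ref{sec:bdf} replace the energy arguments available only for $q\le2$.

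The delicate point, which I expect to be the main obstacle, is bounding the right-hand side so that the optimal orders $h^{2k+2}$ in $L^2$ and $h^{2k}$ in the viscous seminorm survive. The term $\btau^n$ is $O((\Delta t)^q)$ in $L^2$ under the assumed time regularity of $\bu_t$, and is absorbed by Young's inequality into the dissipation and the Gronwall sum. The genuinely dangerous term is $\overline\partial_q\boldsymbol\eta^n$: the regularity $\bu_t\in l^2(0,T;H^k)$ only yields $\|\overline\partial_q\boldsymbol\eta^n\|_0=O(h^k)$, one power short of the $L^2$ velocity bound. To recover the missing order I would not pair this term in $L^2$ but in the dual norm, using that $\partial_t\boldsymbol\eta$ is the Stokes projection error of $(\bu_t,p_t)$ and that an Aubin--Nitsche duality gains one power of $h$, so $\|\overline\partial_q\boldsymbol\eta^n\|_{-1}=O(h^{k+1})$ in the $l^2$-in-time sense; then $(\overline\partial_q\boldsymbol\eta^n,\be_h^n)\le\|\overline\partial_q\boldsymbol\eta^n\|_{-1}\|\nabla\be_h^n\|_0$ and absorbing the gradient into the viscous term contributes only $C\nu^{-1}h^{2k+2}$, the $\nu^{-1}$ being harmless in the non-robust case.

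For the nonlinear part I expand $b(\bu_h^n,\bu_h^n,\cdot)-b(\bu^n,\bu^n,\cdot)$ in $\boldsymbol\eta^n$ and $\be_h^n$ and bound each piece with Lemma~\ref{nonli_bound}. Terms linear in $\be_h^n$ are controlled, using the $L^\infty$ and $L^{2d/(d-1)}$ bounds \eqref{cotasinf}--\eqref{cotas2d_dmenos1} on $\bs_h$ and the regularity of $\bu$, by $\tfrac{\nu}{4}\|\nabla\be_h^n\|_0^2$ plus a Gronwall contribution; the purely projection-error pieces give $O(h^{2k+2})$. The term driving the time-step restriction is the quadratic one: since $b(\be_h^n,\be_h^n,\be_h^n)=0$ by \eqref{eq:skew}, only $-\eta_q b(\be_h^n,\be_h^n,\be_h^{n-1})$ remains (and it is absent for $q\le2$, where $\eta_q=0$), which I bound by \eqref{cota_non_1} as $2\|\be_h^{n-1}\|_\infty\|\nabla\be_h^n\|_0\|\be_h^n\|_0$. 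Estimating $\|\be_h^{n-1}\|_\infty\le c_{\mathrm{inv}}h^{-d/2}\|\be_h^{n-1}\|_0$ by \eqref{inv} and inserting the bound being proven forces a smallness condition of the type $(\Delta t)^q h^{-d/2}\le C$, i.e. $\Delta t\le C h^{d/(2q)}$, to be closed by induction on $n$. Collecting all contributions and applying the discrete Gronwall Lemma~\ref{gronwall2} together with \eqref{ini} yields $\|\be_h^M\|_0^2+\nu\Delta t\sum_{n=q}^M\|\nabla\be_h^n\|_0^2\le C(h^{2k+2}+(\Delta t)^{2q})$; the triangle inequality with \eqref{eq:cotastokes} then gives \eqref{eq:velo_est}, the $h^{2k}$ in the viscous seminorm arising from the projection part $\nu\Delta t\sum_n\|\nabla\boldsymbol\eta^n\|_0^2$.
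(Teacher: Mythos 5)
Your proposal follows essentially the same route as the paper: comparison with the Stokes projection, the multiplier/$G$-stability energy argument with test function $\be_h^n-\eta_q\be_h^{n-1}$, the negative-norm (duality) gain of one power of $h$ for the time derivative of the projection error, the identification of the quadratic term $\eta_q b(\be_h^n,\be_h^n,\be_h^{n-1})$ as the source of the restriction $\Delta t\le Ch^{d/(2q)}$ closed by induction on the $L^\infty$ bound, and the discrete Gronwall lemma. The only (harmless) organizational differences are that the paper keeps $\overline\partial_q\bs_h^n-\bu_t^n$ together as one truncation term and treats the level $n=q$ separately before summing from $n=q+1$, since the gradient of $\be_h^{q-1}$ is not controlled by the initial-data assumption.
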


\begin{proof}
{\em Derivation of the error equation.}
As usual for deriving estimates for the velocity error, the discrete solution is compared with 
another discrete function for which estimates are available. To this end, 
we will compare $(\bu_h^n,p_h^n)$ with the  Stokes projection $(\bs_h^{n},p_h^{s,n}):=(\bs_h(t_n),p_h^{s}(t_n))$. Using \eqref{eq:gal_stokes} and \eqref{eq:stokes_rhs_g_1}, it is straightforward to show that for 
 $n\ge q$ and all $\bv_h\in V_h$
\begin{eqnarray}\label{eq:fullydiscrete}
\lefteqn{\(\overline \partial_q\bs_h^{n},\bv_h\)+\nu\(\nabla \bs_h^{n},\nabla \bv_h\)+b\(\bs_h^{n},\bs_h^{n},\bv_h\) -\(\nabla \cdot \bv_h, p_h^{s,n}\)
\nonumber}\\
&=& \(\boldsymbol f^{n},\bv_h\)+\(\overline \partial_q\bs_h^{n}-\bu_t^{n},\bv_h\)
+b\(\bs_h^{n},\bs_h^{n},\bv_h\)-b\(\bu^{n},\bu^{n},\bv_h\).
\end{eqnarray}
Then, denoting by 
\[
\be_h^n=\bs_h^n-\bu_h^n,
\]
and subtracting \eqref{eq:bdfq_grad_div} from \eqref{eq:fullydiscrete}, we get
for all $\bv_h\in V_h$  
\begin{eqnarray}\label{fully_error_gd}
\lefteqn{\hspace*{-12em}\(\overline \partial_q\be_h^{n},\bv_h\)+\nu\(\nabla \be_h^{n},\nabla \bv_h\)
+b\(\bs_h^{n}, \bs_h^{n},\bv_h\)-b\(\bu_h^n,\bu_h^n,\bv_h\)+\(\nabla \cdot \bv_h, p_h^n-p_h^{s,n}\)}
 \nonumber\\
 &=& \(\btau_1^{n}+\btau_2^{n},\bv_h\),
\end{eqnarray}
with
\begin{equation}\label{lostrun}
\btau_1^{n}=\overline \partial_q\bs_h^{n}-\bu_t^{n} \quad\mbox{and}\quad
\(\btau_2^{n},\bv_h\) = b\(\bs_h^{n},\bs_h^{n},\bv_h)-b(\bu^{n},\bu^{n},\bv_h\).
\end{equation}

{\em Application of $G$-stability.}
We choose in \eqref{fully_error_gd} $\bv_h=\be_h^n-\eta_q\be_h^{n-1}\in V_h^{\rm div}$ to get
\begin{eqnarray*}
\lefteqn{\(\overline \partial_q\be_h^{n},\be_h^n-\eta_q\be_h^{n-1}\)+\nu\|\nabla \be_h^{n}\|_0^2= \eta_q\nu \(\nabla\be_h^n,\nabla\be_h^{n-1}\)}\nonumber\\
&&+b\(\bu_h^n,\bu_h^n,\be_h^n-\eta_q\be_h^{n-1}\)-b\(\bs_h^{n}, \bs_h^{n},\be_h^n-\eta_q\be_h^{n-1}\)+\(\btau_1^{n}+\btau_2^{n},\be_h^n-\eta_q\be_h^{n-1}\).
\end{eqnarray*}
Let $\bE_h^n = [\be_h^n,\ldots,\be_h^{n-q+1}]$, using the definition of $\overline \partial_q$ from 
\eqref{eq_bdf_q}, and applying Lemma~\ref{lema_3_6} in combination with Lemma~\ref{lem:mult_tech} 
with $H=L^2(\Omega)^d$ yields
\begin{eqnarray}\label{eq:prin}
\lefteqn{\left|\bE_h^n\right|_G^2-\left|\bE_h^{n-1}\right|_G^2+\nu\Delta t \|\nabla \be_h^{n}\|_0^2
\le {\eta_q} \Delta t \nu(\nabla \be_h^{n},\nabla \be_h^{n-1})}\\
&& +\Delta t \left[b\(\bu_h^n,\bu_h^n,\be_h^n-\eta_q\be_h^{n-1}\)-b\(\bs_h^{n}, \bs_h^{n},\be_h^n-\eta_q\be_h^{n-1}\)\right]+\Delta t \(\btau_1^{n}+\btau_2^{n},\be_h^n-\eta_q\be_h^{n-1}\),\nonumber
\end{eqnarray}
where the definition \eqref{norma_G} of the $G$-norm was also used. 

{\em Estimate of nonlinear convective term.}
The nonlinear term on the right-hand side of \eqref{eq:prin} can be decomposed with respect 
to the test function 
\begin{eqnarray}\label{nonli1}
\lefteqn{b(\bu_h^n,\bu_h^n,\be_h^n-\eta_q\be_h^{n-1})-b(\bs_h^{n}, \bs_h^{n},\be_h^n-\eta_q\be_h^{n-1})}\\
&=&
\left[b\(\bu_h^n,\bu_h^n,\be_h^n\)-b\(\bs_h^{n}, \bs_h^{n},\be_h^n\)\right]
+\left[b\(\bu_h^n,\bu_h^n,-\eta_q\be_h^{n-1}\)-b\(\bs_h^{n}, \bs_h^{n},-\eta_q\be_h^{n-1}\)\right].\nonumber
\end{eqnarray}
Applying the skew-symmetric property of the trilinear term \eqref{eq:skew} together with  \eqref{cota_non_1} leads to 
\begin{eqnarray}\label{nonli2}
&&|b\(\bu_h^n,\bu_h^n,\be_h^n\)-b\(\bs_h^{n}, \bs_h^{n},\be_h^n\)|=|-b\(\be_h^n,\bs_h^{n},\be_h^n\)-b\(\bu_h^n,\be_h^n,\be_h^n\)|=|b\(\be_h^n,\be_h^n,\bs_h^{n}\)|\nonumber\\
&&\quad\le 2\| \bs_h^{n}\|_\infty\|\nabla\be_h^n\|_0\|\be_h^n\|_0
\le\frac{64}{\nu}\|\bs_h^{n}\|_\infty^2\|\be_h^n\|_0^2+\frac{\nu}{64}\|\nabla \be_h^n\|_0^2.
\end{eqnarray}
The second term on the right-hand side of \eqref{nonli1} can be rewritten in the form 
\begin{eqnarray}\label{nonli3med}
\lefteqn{\hspace*{-5em}b\(\bu_h^n,\bu_h^n,-\eta_q\be_h^{n-1}\)-b\(\bs_h^{n}, \bs_h^{n},-\eta_q\be_h^{n-1}\)=-b\(\be_h^n,\bs_h^{n},-\eta_q\be_h^{n-1}\)-b\(\bu_h^n,\be_h^n,-\eta_q\be_h^{n-1}\)}\nonumber\\
&=&b\(\be_h^n,\bs_h^{n},\eta_q\be_h^{n-1}\)-b\(\be_h^n,\be_h^n,\eta_q\be_h^{n-1}\)+b\(\bs_h^{n},\be_h^n,\eta_q\be_h^{n-1}\).
\end{eqnarray}
We now apply \eqref{cota_non_2}, \eqref{cota_non_1}, and \eqref{cota_non_3}  followed by Young's inequality
to bound the first, second, and third term on the right-hand side above
\begin{eqnarray}\label{nonli3}
&&\left|b\(\be_h^n,\bs_h^{n},\eta_q\be_h^{n-1}\)\right|
\le  \frac{32 \eta_q^2}{\nu} \(C \|\nabla\bs_h^{n}\|_{L^{2d/(d-1)}}^2+ \|\bs_h^{n}\|_{\infty}^2\)\|\be_h^{n-1}\|_0^2
+ \frac{\nu}{64}\|\nabla\be_h^n\|_0^2,\nonumber\\
&&\left|b\(\be_h^n,\be_h^{n},\eta_q\be_h^{n-1}\)\right|
\le \frac{64\eta_q^2}{\nu}\|\be_h^{n-1}\|_\infty^2\|\be_h^{n}\|_0^2+\frac{\nu}{64}\|\nabla \be_h^n\|_0^2,\\
&&\left|b\(\bs_h^{n},\be_h^n,\eta_q\be_h^{n-1}\)\right|
\le \frac{32\eta_q^2}{\nu}\left(\|\bs_h^n\|_{\infty}^2+C\|\nabla \cdot\bs_h^n\|_{L^{2d}}^2\right)\|\be_h^{n-1}\|_0^2
+\frac{\nu}{64}\|\nabla \be_h^n\|_0^2\nonumber.
\end{eqnarray} 
Summarizing, \eqref{nonli1}, \eqref{nonli2}, and \eqref{nonli3} gives 
\begin{eqnarray}\label{nonli_ult}
\lefteqn{b(\bu_h^n,\bu_h^n,\be_h^n-\eta_q\be_h^{n-1})-b(\bs_h^{n}, \bs_h^{n},\be_h^n-\eta_q\be_h^{n-1})
\le \frac{64}{\nu} \(\|\bs_h^{n}\|_\infty^2+\eta_q^2 \|\be_h^{n-1}\|_\infty^2 \)\|\be_h^n\|_0^2}\nonumber\\
&&+\frac{32\eta_q^2}{\nu}\left(2\|\bs_h^n\|_\infty^2+ C \|\nabla\bs_h^{n}\|_{L^{2d/(d-1)}}^2+ C\|\nabla\cdot\bs_h^n\|_{L^{2d}}^2\right)\|\be_h^{n-1}\|_0^2+\frac{\nu}{16}\|\nabla \be_h^n\|_0^2.
\end{eqnarray}

{\em Estimating remaining terms and summarize estimates.}
Going back to \eqref{eq:prin}, applying the Cauchy--Schwarz inequality and Young's inequality
yields for the first term on the right-hand side 
\begin{eqnarray}\label{eq:con}
\eta_q\nu \(\nabla\be_h^n,\nabla\be_h^{n-1}\)\le \frac{\eta_q}{2}\left(\nu\|\nabla\be_h^n\|_0^2+\nu\|\nabla\be_h^{n-1}\|_0^2\right).
\end{eqnarray}
The last term on the right-hand side of \eqref{eq:prin} is bounded by applying the dual pairing, 
the triangle and Young's inequality, so that 
\begin{eqnarray}\label{eq:trun}
\left|\(\btau_1^{n}+\btau_2^{n},\be_h^n-\eta_q\be_h^{n-1}\)\right|&\le& \|\btau_1^{n}+\btau_2^{n}\|_{-1}\(\|\nabla \be_h^n-\eta_q \nabla\be_h^{n-1}\|_0\)\nonumber\\
&\le&\frac{C}{\nu}\|\btau_1^{n}+\btau_2^{n}\|_{-1}^2+\frac{\nu}{32}\|\nabla \be_h^n\|_0^2+\frac{\nu}{32}\|\nabla\be_h^{n-1}\|_0^2,
\end{eqnarray}
where the constant $C$ depends on $\eta_q^2$.

Inserting \eqref{nonli_ult}, \eqref{eq:con}, and \eqref{eq:trun} in \eqref{eq:prin}, we reach
\begin{eqnarray}\label{eq:prin3}
\lefteqn{\left|\bE_h^n\right|_G^2-\left|\bE_h^{n-1}\right|_G^2+\frac{15\nu}{16}\Delta t \|\nabla \be_h^{n}\|_0^2-\Delta t \left(\frac{\eta_q}{2}+\frac{1}{32}\right)\left(\nu\|\nabla\be_h^n\|_0^2+\nu\|\nabla\be_h^{n-1}\|_0^2\right)}\nonumber\\
&\le& \Delta t \frac{64}{\nu}\(\|\bs_h^{n}\|_\infty^2+\eta_q^2 \|\be_h^{n-1}\|_\infty^2 \) \|\be_h^n\|_0^2+\frac{C\Delta t}{\nu}\|\btau_1^{n}+\btau_2^{n}\|_{-1}^2\nonumber\\
&& +\frac{32\eta_q^2}{\nu}\left(2\|\bs_h^n\|_\infty^2+ C \|\nabla\bs_h^{n}\|_{L^{2d/(d-1)}}^2+ C\|\nabla\cdot\bs_h^n\|_{L^{2d}}^2\right)\|\be_h^{n-1}\|_0^2.
\end{eqnarray}

Adding terms in \eqref{eq:prin3} from $n=q+1,\ldots,M$, we obtain
\begin{eqnarray}\label{eq:prin4}
\lefteqn{\left|\bE_h^M\right|_G^2+\frac{(14-16\eta_q)}{16}\Delta t \nu\sum_{n=q+1}^{M}\|\nabla \be_h^n\|_0^2\le 
\left|\bE_h^q\right|_G^2+\left(\frac{\eta_q}{2}+\frac{1}{32}\right)\Delta t \nu\|\nabla \be_h^q\|_0^2}\\
&&+ \Delta t c^q \|\be_h^q\|_0^2 +\Delta t\tilde{c}^M\|\be_h^M\|_0^2
+\Delta t\sum_{n=q+1}^{M-1} \(c^n+\tilde{c}^n\)\|\be_h^n\|_0^2+\frac{C}\nu\Delta t \sum_{n=q+1}^M
\|\btau_1^{n}+\btau_2^{n}\|_0^2,\nonumber
\end{eqnarray}
where, for $n=q,\ldots,M$, 
\begin{eqnarray}\label{constants}
c^n &=&\frac{32\eta_q^2}{\nu}\left(2\|\bs_h^n\|_\infty^2+ C \|\nabla\bs_h^{n}\|_{L^{2d/(d-1)}}^2+ C\|\nabla\cdot\bs_h^n\|_{L^{2d}}^2\right), \nonumber\\
\tilde{c}^n &=& \frac{64}{\nu}\(\|\bs_h^{n}\|_\infty^2+\eta_q^2 \|\be_h^{n-1}\|_\infty^2 \).
\end{eqnarray}
Let us fix a constant $c_\infty$ and let us assume for the moment that 
\begin{eqnarray}\label{apriori}
\|\be_h^n\|_\infty\le c_\infty,\quad n=0,1,\ldots,M-1.
\end{eqnarray}
We will prove \eqref{apriori} later by induction. 
The constants in \eqref{constants} can be bounded by 
taking into account \eqref{cotasinf}, \eqref{cotasl2d}, and  \eqref{cotas2d_dmenos1}
\begin{eqnarray}\label{constants_aco}
c^n&\le&c_{1,\bu}:=\frac{32\eta_q^2}{\nu}\left(8\ \esssup{0\le t\le T}\|\bu(t)\|_\infty^2 +C\ \esssup{0\le t\le T}\(\|\bu(t)\|_2+\|p(t)\|_1\)^2\right),\nonumber\\
\tilde{c}^n&\le&c_{2,\bu}:= \frac{64}{\nu}\(4\esssup{0\le t\le T}\|\bu(t)\|_\infty^2+ \eta_q^2c_\infty^2\).
\end{eqnarray}
From \eqref{eta}, 
we observe that ${(14-16\eta_q)}\ge 1,$ $q=1,\ldots,5$.
Applying \eqref{equi_nor} to \eqref{eq:prin4} gives
\begin{eqnarray}\label{eq:prin5}
\lefteqn{\lambda_{\rm min}\|\be_h^M\|_0^2+\frac{\Delta t}{16} \nu\sum_{n=q+1}^M\|\nabla \be_h^n\|_0^2\le\lambda_{\rm max}\sum_{n=1}^{q-1} \|\be_h^n\|_0^2+c^*\Delta t \nu\|\nabla \be_h^q\|_0^2+\hat{c}  \|\be_h^q\|_0^2}\nonumber\\
&& +\Delta t{c_{2,\bu}}\|\be_h^M\|_0^2+\Delta t\sum_{n=q+1}^{M-1} \(c_{1,\bu}+c_{2,\bu}\)\|\be_h^n\|_0^2+\Delta t\frac{C}\nu \sum_{n=q+1}^M
\|\btau_1^{n}+\btau_2^{n}\|_0^2,
\end{eqnarray}
where $\hat{c}=\lambda_{\rm max}+\Delta t c^q\le \lambda_{\rm max}+{\Delta t}c_{1,\bu},$ and $c^*=\left(\nicefrac{\eta_q}{2}+\nicefrac{1}{32}\right)$.

{\em Deriving a form of the estimate that fits the discrete Gronwall lemma.}
We need to bound the second and third terms on the right-hand side of \eqref{eq:prin5}. To this end, we consider \eqref{fully_error_gd}
for $n=q$ and $\bv_h=\be_h^q$, use \eqref{eq_bdf_q}, apply the dual pairing, and the Cauchy--Schwarz
inequality to get 
\begin{eqnarray}\label{nq}
\delta_0 \|\be_h^q\|_0^2+\Delta t \nu\|\nabla \be_h^{q}\|_0^2&\le&
\Delta t\left|(b\(\bs_h^{q}, \bs_h^{q},\be_h^q\)-b\(\bu_h^q,\bu_h^q,\be_h^q\)\right|+\Delta t\|\btau_1^{q}+\btau_2^{q}\|_{-1}\|\nabla \be_h^q\|_0\nonumber\\
&&+\left\|\sum_{i=1}^q\delta_i\be_h^{q-i}\right\|_0\|\be_h^q\|_0.
\end{eqnarray}
Utilizing \eqref{nonli2} with $n=q$ and Young's inequality yields
\begin{equation}\label{ehq_1}
\frac{3\delta_0}4 \|\be_h^q\|_0^2+\Delta t \frac{31}{32}\nu\|\nabla \be_h^{q}\|_0^2 \le
 \Delta t\frac{64}{\nu}\|\bs_h^{q}\|_\infty^2\|\be_h^q\|_0^2+\Delta t\frac{C}{\nu}\|\btau_1^{q}+\btau_2^{q}\|_{-1}^2 +\frac{1}{\delta_0}\left\|\sum_{i=1}^q\delta_i\be_h^{q-i}\right\|_0^2.
\end{equation}
In view of \eqref{cotasinf}, we assume
\begin{eqnarray}\label{cond_1t}
\Delta t <\frac{\delta_0}{4}\left(\frac{128}{\nu}\esssup{0\le t\le T}\|\bu(t)\|_\infty^2 \right)^{-1},
\end{eqnarray}
so that we get from \eqref{ehq_1}
\begin{eqnarray}\label{ehq_2}
\frac{\delta_0}{2} \|\be_h^q\|_0^2+\Delta t \frac{31}{32}\nu\|\nabla \be_h^{q}\|_0^2\le
\Delta t\frac{C}{\nu} \|\btau_1^{q}+\btau_2^{q}\|_{-1}^2 +\frac{1}{\delta_0}\left\|\sum_{i=1}^q\delta_i\be_h^{q-i}\right\|_0^2.
\end{eqnarray}
From \eqref{ehq_2} we can infer that there exists a constant $C$ such that
\begin{equation}\label{cond_iniq}
c^*\Delta t \nu\|\nabla \be_h^q\|_0^2+\hat{c}_q  \|\be_h^q\|_0^2\le {\Delta t} \frac{C}{\nu}\|\btau_1^{q}+\btau_2^{q}\|_{-1}^2 +C\sum_{n=0}^{q-1}\|\be_h^{n}\|_0^2.
\end{equation}
Inserting \eqref{cond_iniq} into \eqref{eq:prin5} we obtain, after having reordered terms, 
\begin{eqnarray}\label{eq:prindef}
\lefteqn{\|\be_h^M\|_0^2+\frac{\Delta t \nu}{16\lambda_{\rm min}}\sum_{n=q+1}^M\|\nabla \be_h^n\|_0^2\le \Delta t\frac{c_{2,\bu}}{\lambda_{\rm min}}\|\be_h^M\|_0^2} \nonumber\\
&&+\Delta t\sum_{n=q+1}^{M-1} \frac{(c_{1,\bu}+c_{2,\bu})}{\lambda_{\rm min}}\|\be_h^n\|_0^2+\Delta t \frac{C}{\nu}\sum_{n=q}^M\|\btau_1^{n}+\btau_2^{n}\|_{-1}^2+ C\sum_{n=0}^{q-1} \|\be_h^n\|_0^2.
\end{eqnarray}

{\em Application of the discrete Gronwall lemma.} Estimate \eqref{eq:prindef} is in such a form that the discrete Gronwall lemma, Lemma~\ref{gronwall2},
can be applied. To this end, take $a_n=\|\be_h^n\|_0^2$, $b_n=\Delta t \nu\|\nabla \be_h^n\|_0^2/({16\lambda_{\rm min}})$, 
$\gamma_n=\Delta t c_{2,\bu}/\lambda_{\rm min}$, $\delta_n=\Delta t c_{1,\bu}/\lambda_{\rm min}$
and the last two terms in \eqref{eq:prindef} represent the last two terms in \eqref{eq:gronwall1}.
We can apply Lemma~\ref{gronwall2} if
$
\gamma_n<1.
$
From now on, we will assume
\begin{eqnarray}\label{cond_t2}
 \frac{\Delta t}{\lambda_{\rm min}} c_{2,\bu}\le \frac{1}{2},
\end{eqnarray}
so that $\gamma_n\le 1/2$ and $\sigma_n=(1-\gamma_n)^{-1}\le 2$, $n=q+1,\ldots,M$. 
Then, the application of the discrete Gronwall lemma yields
\begin{eqnarray*}
\lefteqn{\|\be_h^M\|_0^2+\frac{\Delta t \nu}{16\lambda_{\rm min}}\sum_{n=q+1}^M\|\nabla \be_h^n\|_0^2}\\
&\le& C \exp\left(\sigma_M\gamma_M+\sum_{n=q+1}^{M-1}(\sigma_j\gamma_j+\delta_j)\right)\left(\frac{\Delta t}{\nu}\sum_{n=q}^M\|\btau_1^{n}+\btau_2^{n}\|_{-1}^2+ \sum_{n=0}^{q-1} \|\be_h^n\|_0^2\right)\nonumber\\
&\le& C \ee^{T(c_{1,\bu}+2c_{2,\bu})/\lambda_{\rm min}}\left(\frac{\Delta t}{\nu}\sum_{n=q}^M\|\btau_1^{n}+\btau_2^{n}\|_{-1}^2+ \sum_{n=0}^{q-1} \|\be_h^n\|_0^2\right).
\end{eqnarray*}
Adding a term on the left-hand side, 
taking into account \eqref{cond_iniq}, and simplifying the representation of the constants, then 
there is a constant $C>0$ such that 
\begin{equation}\label{eq:coner}
\|\be_h^M\|_0^2+\Delta t \nu\sum_{n=q}^M\|\nabla \be_h^n\|_0^2\le C  \ee^{T(c_{1,\bu}+2c_{2,\bu})/\lambda_{\rm min}} \left(\frac{\Delta t}{\nu}\sum_{n=q}^M\|\btau_1^{n}+\btau_2^{n}\|_{-1}^2+ \sum_{n=0}^{q-1} \|\be_h^n\|_0^2\right).
\end{equation}

{\em Estimate terms on the right-hand side of \eqref{eq:coner}.}
The term with the initial conditions is bounded by assumption \eqref{ini}.

We will bound now the truncation errors. 
Recalling \eqref{lostrun}, for $\btau_1^n$ we have
\begin{equation}\label{cota_trun_1}
\|\btau_1^{n}\|_{-1}=\|\overline \partial_q\bs_h^{n}-\bu_t^{n}\|_{-1}\le \|\overline \partial_q\bs_h^{n}-\bs_t^{n}\|_{-1}
+\|\bs_t^{n}-\bu^n_t\|_{-1}.
\end{equation}
The second term on the right-hand side can be bounded by utilizing \eqref{sh_menos1}, 
which gives 
\begin{equation}\label{cota_trun_2}
\|\bs_t^{n}-\bu^n_t\|_{-1}\le C h^{k+1}\(\|\bu_t^n\|_k+\|p_t^n\|_{k-1}\).
\end{equation}
To bound the first term, we apply \cite[(B.3)]{bdf_stokes}
\begin{equation}\label{cota_trun_3}
\|\overline \partial_q\bs_h^{n}-\bs_t^{n}\|_{-1}\le \|\overline \partial_q\bs_h^{n}-\bs_t^{n}\|_{0}\le C(\Delta t)^{q-1/2}\left(\int_{t_{n-q}}^{t_n}\|\partial_t^{q+1}\bs_h(s)\|_0^2 \ \di s\right)^{1/2},
\end{equation}
for a positive constant $C$.
Then, from \eqref{cota_trun_1}, \eqref{cota_trun_2}, and \eqref{cota_trun_3}
we get
\begin{equation}\label{tiempo2}
\sum_{n=q}^M\Delta t \|\btau_1^n\|_{-1}^2\le C h^{2k+2}\Delta t\sum_{n=q}^M\(\|\bu_t^n\|_k^2+\|p_t^n\|_{k-1}^2\)
+C (\Delta t)^{2q}\int_0^T\|\partial_t^{q+1}\bs_h(s)\|_0^2 \ \di s.
\end{equation}
Let us observe that, applying {\eqref{eq:cotastokes}}
we can bound
\begin{eqnarray}\label{er_pro_u}
\int_0^T\|\partial_t^{q+1}\bs_h(s)\|_0^2 \ \di s&\le &2\int_0^T\|\partial_t^{q+1}\bu(s)\|_0^2 \ \di s
+2\int_0^T\|\partial_t^{q+1}(\bs_h-\bu(s))\|_0^2 \ \di s\nonumber\\
&\le &2\int_0^T\|\partial_t^{q+1}\bu(s)\|_0^2 \ \di s
+Ch\int_0^T\(\|\partial_t^{q+1} \bu(s)\|_1^2 +\|\partial_t^{q+1}p(s)\|_0^2\)\ \di s\nonumber\\
&\le &C\int_0^T\|\partial_t^{q+1}\bu(s)\|_0^2 \ \di s,
\end{eqnarray}
for $h$ small enough.
For $\btau_2^n$, recalling \eqref{lostrun},  we have
\begin{equation}\label{cota_trun_21}
\|\btau_2^{n}\|_{-1}=\sup_{\bv\in V, \bv\neq \boldsymbol 0}\frac{\left|b\(\bs_h^{n},\bs_h^{n},\bv\)-b\(\bu^{n},\bu^{n},\bv\)\right|}{\|\nabla \bv\|_0}.
\end{equation}
A straightforward calculation leads to 
\begin{eqnarray}\label{asin2}
b(\bs_h^{n},\bs_h^{n},\bv)-b(\bu^{n},\bu^{n},\bv)=b(\bs_h^{n}-\bu^n,\bs_h^{n},\bv)+b(\bu^n,\bs_h^{n}-\bu^n,\bv).
\end{eqnarray}
For the first term on the right-hand side of \eqref{asin2}, \eqref{cota_non_4}, and \eqref{eq:cotastokes} are  used to obtain
\begin{eqnarray}\label{asin3}
b(\bs_h^{n}-\bu^n,\bs_h^{n},\bv)
&\le& C\(\|\nabla \bs_h^{n}\|_{L^{2d}}+\|\bs_h^n\|_\infty\)\|\bs_h^{n}-\bu^n\|_0 \|\nabla \bv\|_0\nonumber\\
&\le& C h^{k+1} \(\|\nabla \bs_h^{n}\|_{L^{2d}}+\|\bs_h^n\|_\infty\)\(\|\bu^n \|_{k+1}+\|p^n\|_k\)\|\nabla \bv\|_0.
\end{eqnarray}
Let us observe that $\|\bs_h^n\|_\infty$ and $\|\nabla \bs_h^{n}\|_{L^{2d}}$ are bounded in \eqref{cotasinf} and \eqref{cotas2d}, respectively.
For the second term on the right-hand side of \eqref{asin2}, using \eqref{eq:skew}, taking into account that $\nabla \cdot \bu^n=0$,
and applying \eqref{eq:cotastokes}, we get
\begin{eqnarray}\label{asin4}
b\(\bu^n,\bs_h^{n}-\bu^n,\bv\)&=&b\(\bu^n,\bv,\bs_h^{n}-\bu^n\)\le \|\bu^n\|_\infty\|\nabla\bv\|_0\|\bs_h^{n}-\bu^n\|_0
\nonumber\\
&\le&Ch^{k+1}\|\bu^n\|_\infty \(\|\bu^n \|_{k+1}+\|p^n\|_k\)\|\nabla \bv\|_0.
\end{eqnarray}
With the Sobolev embedding \eqref{sob1} we can bound $\|\bu^n\|_\infty\le C\|\bu^n\|_2$. 
Then, from \eqref{cota_trun_21}--\eqref{asin4}, we obtain
\begin{eqnarray*}
\|\btau_2^n\|_{-1}&\le& C h^{k+1} \(\|\bu^n\|_2+\|p^n\|_1\)\(\|\bu^n \|_{k+1}+\|p^n\|_k\)\nonumber\\
&\le& Ch^{k+1}\esssup{0\le t\le T}\(\|\bu(t)\|_2+\|p(t)\|_1\)\(\|\bu^n \|_{k+1}+\|p^n\|_k\),
\end{eqnarray*}
so that
\begin{eqnarray}\label{trunca2}
\sum_{n=q}^M\Delta t \|\btau_2^n\|_{-1}^2\le C h^{2k+2}\Delta t\sum_{n=q}^M\(\|\bu^n\|_{k+1}^2+\|p^n\|_{k}^2\).
\end{eqnarray}

Inserting \eqref{ini}, \eqref{tiempo2}, and \eqref{trunca2} in \eqref{eq:coner} we get for a constant $C_{\mathrm{e}}>0$
\begin{eqnarray}\label{eq:cota_fin}
\lefteqn{\|\be_h^M\|_0^2+\Delta t \nu\sum_{n=q}^M\|\nabla \be_h^n\|_0^2\le C  \ee^{T(c_{1,\bu}+2c_{2,\bu})/\lambda_{\rm min}} \left[h^{2k+2}+(\Delta t)^{2q}
\right.}\nonumber\\
&&+ h^{2k+2}\sum_{n=q}^M\Delta t\(\|\bu_t^n\|_k^2+\|p_t^n\|_{k-1}^2\)
+(\Delta t)^{2q}\int_0^T\|\partial_t^{q+1}\bu(s)\|_0^2 \ \di s\nonumber\\
&&\left. h^{2k+2}\sum_{n=q}^M\Delta t\(\|\bu^n\|_{k+1}^2+\|p^n\|_{k}^2\)\right]\le C_{\mathrm{e}} \(h^{2k+2}+(\Delta t)^{2q}\).
\end{eqnarray}
Notice that from \eqref{eq:cota_fin} one gets super-convergence in space with respect to the norm of 
$V$ between the Galerkin approximation and the Stokes projection, in agreement with \cite{paper_tesis,paper_NS_lin}, where this super-convergence result was proved for the 
continuous-in-time case.

{\em Proof of assumption \eqref{apriori}.} The derivation of \eqref{eq:cota_fin}
used the assumption \eqref{apriori}.
Then, we need to prove that for $j=0,1,\ldots,M-1$, the condition
\begin{equation}\label{apri}
\|\be_h^j\|_\infty\le c_\infty
\end{equation}
holds.
We will prove this by induction whenever the time step $\Delta t$ and the mesh size $h$ are small enough. Let us fix a, possibly large,  positive constant $c_\infty$. Assume that the mesh width, and after having set 
the mesh width also the time step, are sufficiently small such that 
\begin{equation}\label{cond_ht}
\begin{array}{rclrcl}
c_{\rm inv} C_{\mathrm{ic}}^{1/2} h^{k+1-d/2}&\le& \displaystyle \frac{c_\infty}{2}, &  c_{\rm inv}C_{\mathrm{e}}^{1/2}h^{k+1-d/2}&\le& \displaystyle \frac{c_\infty}{2},\\[1em]
c_{\rm inv} C_{\mathrm{ic}}^{1/2} h^{-d/2}(\Delta t)^q&\le& \displaystyle \frac{c_\infty}{2},& c_{\rm inv}C_{\mathrm{e}}^{1/2}h^{-d/2}(\Delta t)^q&\le& \displaystyle \frac{c_\infty}{2}.
\end{array}
\end{equation}

Consider first the initial values. Using the inverse inequality \eqref{inv} gives
\[
\|\be_h^n\|_\infty\le c_{\rm inv} h^{-d/2}\|\be_h^n\|_0.
\]
For $j=0,\ldots,q-1$, applying \eqref{ini} and \eqref{cond_ht}, we get
\[
\|\be_h^j\|_\infty\le c_{\rm inv} h^{-d/2}\|\be_h^j\|_0\le c_{\rm inv} C_{\mathrm{ic}}^{1/2}h^{-d/2}\(h^{k+1}+(\Delta t)^q\)\le c_\infty,
\]
so that condition \eqref{apriori} is satisfied for $n=0,\ldots,q-1$.

Assume now \eqref{apri} holds for $j\le n-1$.
Then, we can apply the arguments that gave \eqref{eq:cota_fin} for $n$ instead of $M$ so that
\[
\|\be_h^n\|_0\le C_{\mathrm{e}}^{1/2}\left(h^{k+1}+(\Delta t)^q\right).
\]
Applying now \eqref{cond_ht} yields
\[
\|\be_h^n\|_\infty\le c_{\rm inv} h^{-d/2}\|\be_h^n\|_0\le c_{\rm inv} h^{-d/2}C_{\mathrm{e}}^{1/2}\left(h^{k+1}+(\Delta t)^q\right)\le c_\infty,
\]
so that \eqref{apri} holds for $n$. As a consequence, \eqref{apriori} is true and the bound \eqref{eq:cota_fin} holds for the 
error bound of the fully discrete method.

{\em Application of the triangle inequality.} The proof finishes by applying the triangle
inequality $\|\bu(T) - \bu_h^M\|_0 \le \|\bu(T)-\bs_h^n\|_0  +\|\be_h^M\|_0$ and 
analogously to the $L^2(\Omega)$ norm of the gradient and then applying the estimate 
\eqref{eq:cotastokes}.
\end{proof}
\begin{remark}\label{remark1}
To derive the velocity error estimate given in Theorem~\ref{thm:velo}, 
we assumed conditions \eqref{cond_1t}, \eqref{cond_t2}, 
and \eqref{cond_ht}, which can be interpreted as follows. First, the time step should be small enough, where the smallness depends in essence on data of the problem and norms of its solution (\eqref{cond_1t}, \eqref{cond_t2}). Second, the quantity
$
h^{-d/2}(\Delta t)^q
$
should remain bounded, which gives a restriction of type $\Delta t \le C h^{d/(2q)}$.
This is a quite mild restriction for $q\ge3$. Notice that in the case $q\le 2$, resp. $\eta_q=0$ (BDF-1 and BDF-2 methods are $A$-stable), this restriction is not needed, compare \eqref{constants_aco}.
\end{remark}

\subsection{Error bound for the pressure}

\begin{Theorem}[Error bound for the pressure] \label{thpre1} Let the assumptions of Theorem~\ref{thm:velo}, assumption \eqref{con_ini_norm1} below for the initial conditions, and the time step restriction \eqref{cota_t_pre} be satisfied, then  
\[
\Delta t \sum_{n=q}^M \|p(t_n)-p_h^n\|_0^2\le C\(h^{2k}+(\Delta t)^{2q}\).
\]
The constant depends on $\nu^{-1}$ and on the inverse of the inf-sup constant. 
\end{Theorem}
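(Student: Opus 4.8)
The plan is to recover the pressure error from the velocity error equation by exploiting the discrete inf-sup condition \eqref{LBB}, which controls $\|p_h^n - p_h^{s,n}\|_0$ in terms of a discrete negative norm of the momentum residual. First I would return to the error equation \eqref{fully_error_gd}, but now test against a general $\bv_h \in V_h$ (not a discretely divergence-free one), so that the pressure-difference term $(\nabla\cdot\bv_h,\,p_h^n - p_h^{s,n})$ survives. Solving for that term gives
\begin{equation*}
(\nabla\cdot\bv_h,\,p_h^n-p_h^{s,n}) = (\btau_1^n+\btau_2^n,\bv_h) - (\overline\partial_q\be_h^n,\bv_h) - \nu(\nabla\be_h^n,\nabla\bv_h) - \bigl[b(\bs_h^n,\bs_h^n,\bv_h)-b(\bu_h^n,\bu_h^n,\bv_h)\bigr].
\end{equation*}
Dividing by $\|\nabla\bv_h\|_0$ and taking the supremum, the inf-sup condition \eqref{LBB} yields
$\beta_0\|p_h^n-p_h^{s,n}\|_0 \le$ (sum of the four terms measured in the dual norm). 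I would then estimate each term on the right: the viscous term gives $\nu\|\nabla\be_h^n\|_0$; the truncation term gives $\|\btau_1^n+\btau_2^n\|_{-1}$, already bounded in \eqref{tiempo2} and \eqref{trunca2}; and the nonlinear difference is bounded exactly as in \eqref{nonli2}, using \eqref{cota_non_1} and the a priori bounds \eqref{cotasinf}, so it is controlled by $\|\bs_h^n\|_\infty\|\nabla\be_h^n\|_0\|\be_h^n\|_0$ and hence by $\|\nabla\be_h^n\|_0$ times a bounded factor.

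The term I expect to be the main obstacle is the discrete time derivative $(\overline\partial_q\be_h^n,\bv_h)$. Dividing by $\|\nabla\bv_h\|_0$ produces the factor $\|\overline\partial_q\be_h^n\|_{-1}$, and squaring and summing in time forces control of $\Delta t\sum_n \|\overline\partial_q\be_h^n\|_{-1}^2$. A crude bound of each $\overline\partial_q\be_h^n$ by a difference quotient of $\be_h^n$ would lose a factor $(\Delta t)^{-1}$ and destroy the optimal rate, so I would instead rewrite $\overline\partial_q\be_h^n$ using the BDF coefficients \eqref{eq_bdf_q} and bound its dual norm by a telescoped combination that can be summed against the already-established velocity bound \eqref{eq:velo_est}. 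Concretely, I would split $\overline\partial_q\be_h^n = \overline\partial_q\bs_h^n - \overline\partial_q\bu_h^n$ and reassociate so that the discrete derivative acts on quantities whose $l^2(0,T;H^{-1})$-norm is already controlled by the velocity estimate and the Stokes-projection bound \eqref{sh_menos1}; this is precisely where a time-step restriction of the form \eqref{cota_t_pre}, analogous to $\Delta t \le C h^{3/(2q)}$ mentioned in the introduction, will be needed to absorb the inverse powers of $h$ that arise when passing between $\|\cdot\|_0$ and $\|\cdot\|_{-1}$ on the discrete space.

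Once each term is squared, multiplied by $\Delta t$, and summed from $n=q$ to $M$, the right-hand side consists of: the time-summed viscous energy $\Delta t\,\nu\sum_n\|\nabla\be_h^n\|_0^2$, which is $O(h^{2k}+(\Delta t)^{2q})$ by \eqref{eq:velo_est}; the truncation sums, which are $O(h^{2k+2}+(\Delta t)^{2q})$ by \eqref{tiempo2} and \eqref{trunca2}; the discrete-derivative contribution, controlled as above; and the initial-condition contribution handled by assumption \eqref{con_ini_norm1}. Finally, I would pass from $\|p_h^n-p_h^{s,n}\|_0$ to the true pressure error by the triangle inequality $\|p(t_n)-p_h^n\|_0 \le \|p(t_n)-p_h^{s,n}\|_0 + \|p_h^n-p_h^{s,n}\|_0$ and invoke the Stokes-projection pressure estimate in \eqref{eq:cotastokes}, namely $\|p-p_h^s\|_0 \le Ch^k(\|\bu\|_{k+1}+\|p\|_k)$, which supplies the $h^k$ rate and explains why the pressure bound carries one power of $h$ less than the $L^2$ velocity bound, with the constant inheriting the $\nu^{-1}$ dependence and the extra factor $\beta_0^{-1}$ from the inf-sup step.
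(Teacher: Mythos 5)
Your overall skeleton (decomposition $p^n-p_h^n=(p^n-p_h^{s,n})-(p_h^n-p_h^{s,n})$, the inf-sup step, the final triangle inequality with the Stokes-projection pressure estimate) matches the paper's proof. However, at the one step you yourself identify as the main obstacle --- controlling $\Delta t\sum_{n=q}^M\|\overline\partial_q\be_h^n\|_{-1}^2$ --- your plan has a genuine gap. ``Reassociating the BDF coefficients into a telescoped combination that can be summed against \eqref{eq:velo_est}'' is not a workable argument: the velocity theorem only controls $\|\be_h^n\|_0$ and $\Delta t\,\nu\sum_n\|\nabla\be_h^n\|_0^2$, and any rearrangement of $\overline\partial_q\be_h^n=\frac{1}{\Delta t}\sum_i\delta_i\be_h^{n-i}$ still carries the factor $(\Delta t)^{-1}$ you are trying to avoid; there is no test function onto which a summation by parts could shift the difference operator, since you are estimating a dual norm. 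What the paper actually does is a \emph{second, independent energy argument}: it tests the error equation \eqref{fully_error_gd} with $\bv_h=\overline\partial_q\be_h^n$ (and with $-\eta_q\overline\partial_q\be_h^n$ at level $n-1$), applies the $G$-stability Lemmas \ref{lema_3_6}--\ref{lem:mult_tech} with $H=V$ so that the viscous term telescopes as $\nu|\bE^n|_{G,1}^2-\nu|\bE^{n-1}|_{G,1}^2$, and thereby obtains \eqref{eq:prindt}, a direct bound for $\Delta t\sum_n\|\overline\partial_q\be_h^n\|_0^2$ in terms of $\Delta t\sum_n\|\nabla\be_h^n\|_0^2$, the truncation errors measured now in the $L^2$ norm, and the initial data in the $H^1$ seminorm. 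This is exactly where the new hypothesis \eqref{con_ini_norm1} and the restriction \eqref{cota_t_pre} enter (the latter to keep $\|\nabla\cdot\bu_h^n\|_{L^{2d/(d-1)}}$ bounded after an inverse inequality), and it is also a second, independent source of the rate $h^k$: the truncation errors in $\|\cdot\|_0$ rather than $\|\cdot\|_{-1}$ lose one power of $h$ (compare \eqref{tau1_nor0}--\eqref{tau2_nor0} with \eqref{tiempo2} and \eqref{trunca2}), so the $h^k$ rate is not due to the Stokes-projection pressure estimate alone.

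A secondary flaw: you claim the nonlinear difference in the inf-sup bound is estimated ``exactly as in \eqref{nonli2}.'' That estimate relies on the skew-symmetry identity $b(\bu_h^n,\be_h^n,\be_h^n)=0$, which is only available because the test function there is $\be_h^n$ itself. In the pressure bound the test function is a general $\bv_h\in V_h$, so the term $b(\bu_h^n,\be_h^n,\bv_h)$ survives; the paper handles it with \eqref{cota_non_5}, which requires a uniform bound on $\|\nabla\bu_h^n\|_0$ obtained from \eqref{eq:cota_fin}, the inverse inequality, and \eqref{cond_ht}. This is fixable with tools already in the paper, but it is not the argument you describe.
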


\begin{proof}
{\em Applying the discrete inf-sup condition.} Proving a bound for the pressure starts with the decomposition 
\[
p_n-p_h^n =\(p_n-p_h^{s,n}\)  -\(p_h^n-p_h^{s,n}\).
\]
Then, using the discrete inf-sup stability \eqref{LBB}, the error equation \eqref{fully_error_gd},
the dual pairing, and the Cauchy--Schwarz inequality, 
we get for the second term 
\begin{align}\label{co_pre1}
\|p_h^n-p_h^{s,n}\|_0&\le \frac{1}{\beta_0}\left(\|\overline \partial_q\be_h^n\|_{-1}+\nu \|\nabla \be_h^n\|_0+
\sup_{\bv_h\in V_h,\bv_h\neq \boldsymbol 0}\frac{\left|b\(\bs_h^n,\bs_h^n,\bv_h\)-b\(\bu_h^n,\bu_h^n,\bv_h\)\right|}{\|\nabla\bv_h\|_0}\right)\nonumber\\
 &\quad+\frac{1}{\beta_0}\|\btau_1^n+\btau_2^n\|_{-1}.
\end{align}
This relation is squared,  multiplied by $\Delta t$, and then a summation over the time instants is performed. 
For the second term on the right-hand side, we obtain by using \eqref{eq:cota_fin}
\begin{equation}\label{eq:press_02}
\Delta t \nu \sum_{n=q}^M \|\nabla \be_h^n\|_0^2 \le C_{\mathrm{e}} \(h^{2k+2}+(\Delta t)^{2q}\).
\end{equation}
The third term is estimated as performed in \eqref{asin2}--\eqref{asin3}, with $\bu^n$ replaced by $\bu_h^n$. 
In \eqref{asin3} we apply \eqref{cotasinf} and \eqref{cotas2d}. Then, instead
of \eqref{asin4}, we apply \eqref{cota_non_5} to get
\[
b\(\bu_h^n,\bs_h^{n}-\bu_h^n,\bv_h\)\le \|\nabla \bu_h^n\|_0\|\nabla \be_h^n\|_0\|\nabla \bv_h\|_0
\le \(\|\nabla \be_h^n\|_0+\|\nabla\bs_h^n\|_0\)\|\nabla\be_h^n\|_0\|\nabla \bv_h\|_0.
\]
To bound $\|\nabla\bs_h^n\|_0$ we observe that
adding and subtracting $\nabla \bu^n$ and applying \eqref{eq:cotastokes} gives $\|\bs_h^n\|_1\le C(\|\bu^n\|_1+\|p^n\|_0)$. Then, 
we finally obtain, with Poincar\'e's inequality \eqref{eq:poin} and taking into account that $\|\nabla \be_h^n\|_0$ is bounded, which follows from \eqref{eq:cota_fin}, the inverse inequality \eqref{inv}, and \eqref{cond_ht},
\begin{eqnarray*}
\sup_{\bv_h\in V_h,\bv_h\neq \boldsymbol 0}\frac{\left|b\(\bs_h^n,\bs_h^n,\bv_h\)-b\(\bu_h^n,\bu_h^n,\bv_h\)\right|}{\|\nabla\bv_h\|_0} \le
 C\(\|\be_h^n\|_0+\|\nabla \be_h^n\|_0^{2}+\|\nabla \be_h^n\|_0\)\le C\|\nabla \be_h^n\|_0,
\end{eqnarray*}
so that the bound of this term concludes applying \eqref{eq:press_02} (with a constant depending in $\nu^{-1}$).
Using \eqref{tiempo2} and \eqref{trunca2} leads for the fourth term to 
\begin{equation}\label{eq:press_04}
\Delta t \sum_{n=q}^M \|\btau_1^n+\btau_2^n\|_{-1}^2 \le C \(h^{2k+2}+(\Delta t)^{2q}\).
\end{equation}
 The only term in \eqref{co_pre1} that is not yet bounded is the first one. Instead of $\|\overline \partial_q\be_h^n\|_{-1}$,
we are going to bound $\|\overline \partial_q\be_h^n\|_{0}$ or, more precisely,
$
\sum_{n=q}^M \Delta t \|\overline \partial_q\be_h^n\|_{0}^2,
$
which, as a consequence, will give the bound for
$
\sum_{n=q}^M \Delta t \|p_h^n-p_h^{s,n}\|_0^2.
$

{\em First step of bounding the temporal derivative of the velocity difference, application of $G$-stability.}
To get the bound for the time derivative of the error we argue as in \cite[Theorem 4.2]{bdf_stokes}. We first take in \eqref{fully_error_gd} $\bv_h=\overline \partial_q\be_h^n \in V_h^{\rm div}$ to obtain
\begin{equation}\label{eq;derit1}
\(\overline \partial_q\be_h^n,\overline \partial_q\be_h^n\)+\nu\(\nabla \be_h^n,\nabla \overline \partial_q\be_h^n\)
+b\(\bs_h^n,\bs_h^n,\overline \partial_q\be_h^n\)-b\(\bu_h^n,\bu_h^n,\overline \partial_q\be_h^n\)=\(\btau_1^n+\btau_2^n,\overline \partial_q\be_h^n\).
\end{equation}
Then, we consider equation \eqref{fully_error_gd} for $n-1$ instead of $n$ and take $\bv_h=-\eta_q\overline \partial_q\be_h^n\in V_h^{\rm div}$, which gives
\begin{eqnarray}\label{eq;derit2}
\lefteqn{\hspace*{-13em}-\eta_q\(\overline \partial_q\be_h^{n-1},\overline \partial_q\be_h^n\)-\eta_q\nu\(\nabla \be_h^{n-1},\nabla \overline \partial_q\be_h^n\)
-\eta_q b\(\bs_h^{n-1},\bs_h^{n-1},\overline \partial_q\be_h^n\)}\nonumber\\
+\eta_q b\(\bu_h^{n-1},\bu_h^{n-1},\overline \partial_q\be_h^n\) &=&-\eta_q\(\btau_1^{n-1}+\btau_2^{n-1},\overline \partial_q\be_h^n\).
\end{eqnarray}
Adding \eqref{eq;derit1} and \eqref{eq;derit2} and reordering terms leads to
\begin{eqnarray}\label{eq;derit3}
\lefteqn{\nu\(\nabla \overline \partial_q\be_h^n,\nabla \(\be_h^n-\eta_q \be_h^{n-1}\)\)+\(\overline \partial_q\be_h^n,\overline \partial_q\be_h^n-\eta_q\overline \partial_q\be_h^{n-1}\)+b\(\bs_h^{n},\bs_h^{n},\overline \partial_q\be_h^n\)}\nonumber \\
&&-b\(\bu_h^{n},\bu_h^{n},\overline \partial_q\be_h^n\)-\eta_q b\(\bs_h^{n-1},\bs_h^{n-1},\overline \partial_q\be_h^n\)+\eta_q b\(\bu_h^{n-1},\bu_h^{n-1},\overline \partial_q\be_h^n\) \nonumber\\
&=&\(\btau_1^n+\btau_2^n-\eta_q\(\btau_1^{n-1}+\btau_2^{n-1}\),\overline \partial_q\be_h^n\).
\end{eqnarray}

Let us denote by
\[
\left|\bV^n\right|_{G,1}^2=\sum_{i,j=1}^q g_{i,j}\left(\nabla v^{n-i+1},\nabla v^{n-j+1}\right).
\]
Applying Lemma~\ref{lema_3_6} in combination with Lemma~\ref{lem:mult_tech}
with $H=V$ yields
\begin{eqnarray}\label{eq;derit4}
\lefteqn{\nu \left|\bE^n\right|_{G,1}^2-\nu \left|\bE^{n-1}\right|_{G,1}^2+\Delta t \|\overline \partial_q\be_h^n\|_0^2}\nonumber \\
&\le& \eta_q\Delta t\(\overline \partial_q\be_h^n,\overline \partial_q\be_h^{n-1}\)
 -\Delta t \left[b\(\bs_h^{n},\bs_h^{n},\overline \partial_q\be_h^n\)-b\(\bu_h^{n},\bu_h^{n},\overline \partial_q\be_h^n\)\right]\nonumber\\
&& +\eta_q \Delta t \left[b\(\bs_h^{n-1},\bs_h^{n-1},\overline \partial_q\be_h^n\)- b\(\bu_h^{n-1},\bu_h^{n-1},\overline \partial_q\be_h^n\)\right]
\nonumber\\
&& +\Delta t \(\btau_1^n+\btau_2^n-\eta_q\(\btau_1^{n-1}+\btau_2^{n-1}\),\overline \partial_q\be_h^n\).
\end{eqnarray}
To bound the right-hand side of \eqref{eq;derit4}, we can utilize the same techniques as before. 
Applying the Cauchy--Schwarz and the Young inequality gives for the first term 
\begin{equation}\label{aux1}
\eta_q\Delta t\(\overline \partial_q\be_h^n,\overline \partial_q\be_h^{n-1}\)\le \frac{\eta_q}{2}\Delta t \left(\|\overline \partial_q\be_h^n\|_0^2+\|\overline \partial_q\be_h^{n-1}\|_0^2\right).
\end{equation}
Using the decomposition \eqref{asin2} together with \eqref{cota_non_2}, \eqref{cota_non_3} gives
\begin{eqnarray*}
\lefteqn{\left|b\(\bs_h^{n},\bs_h^{n},\overline \partial_q\be_h^n\)-b\(\bu_h^{n},\bu_h^{n},\overline \partial_q\be_h^n\)\right|}
\nonumber\\
&\le& C\(\|\nabla \bs_h^n\|_{L^{2d/(d-1)}}+\|\bs_h^n\|_\infty+\|\bu_h^n\|_\infty+\|\nabla \cdot \bu_h^n\|_{L^{2d/(d-1)}}\) \|\nabla(\bs_h^n-\bu_h^n)\|_0\|\overline \partial_q\be_h^n\|_0.
\end{eqnarray*}
We observe that $\|\nabla \bs_h^n\|_{L^{2d/(d-1)}}$, $\|\bs_h^n\|_\infty$ are bounded in \eqref{cotas2d_dmenos1} and \eqref{cotasinf}. The third term in the parentheses can be bounded by applying \eqref{inv}, \eqref{eq:cota_fin}, \eqref{cond_ht}, and \eqref{cotasinf}
\begin{eqnarray*}
\|\bu_h^n\|_\infty\le \|\be_h^n\|_\infty+\|\bs_h^n\|_\infty&\le& c_{\rm inv} h^{-d/2}\|\be_h^n\|_0+\|\bs_h^n\|_\infty
\\&\le& c_{\rm inv} h^{-d/2}C_{\mathrm{e}}^{1/2} \(h^{k+1}+(\Delta t)^q\)+\|\bs_h^n\|_\infty\\
&\le& c_{\infty}+2\|\bu^n\|_\infty\le c_{\infty}+2 \esssup{0\le t\le T}\|\bu(t)\|_\infty.
\end{eqnarray*}
Applying \eqref{inv}, \eqref{eq:cota_fin} and \eqref{cotas2d_dmenos1} to the last term in parentheses yields
\begin{eqnarray*}
\|\nabla \cdot \bu_h^n\|_{L^{2d/(d-1)}} &\le &\|\nabla \cdot \be_h^n\|_{L^{2d/(d-1)}}+\|\nabla \bs_h^n\|_{L^{2d/(d-1)}}\\
&\le& c_{\mathrm{inv}}h^{{-3/2}}\|\be_h^n\|_{0}+\|\nabla  \bs_h^n\|_{L^{2d/(d-1)}}\nonumber\\
&\le& c_{\mathrm{inv}}h^{{-3/2}} C_{\mathrm{e}}^{1/2}\(h^{k+1}+(\Delta t)^q\)+C\esssup{0\le t\le T}\(\|\bu(t)\|_{2}+\|p(t)\|_1\),
\end{eqnarray*}
which is bounded whenever
\begin{equation}\label{cota_t_pre}
c_{\mathrm{inv}} C_{\mathrm{e}}^{1/2}(\Delta t)^q\le h^{{3/2}}.
\end{equation}
Assuming \eqref{cota_t_pre} gives the following bound for the second term on the right-hand side of 
\eqref{eq;derit4}
\begin{eqnarray}\label{aux2}
\left|b\(\bs_h^{n},\bs_h^{n},\overline \partial_q\be_h^n\)-b\(\bu_h^{n},\bu_h^{n},\overline \partial_q\be_h^n\)\right|
\le C \|\nabla \be_h^n\|_0\|\overline \partial_q\be_h^n\|_0\le C \|\nabla \be_h^n\|_0^2+\frac{1}{48}\|\overline \partial_q\be_h^n\|_0^2.
\end{eqnarray}
The same argument applied to the third term on the right-hand side of \eqref{eq;derit4}  gives
\begin{eqnarray}\label{aux3}
\eta_q \left|b\(\bs_h^{n-1},\bs_h^{n-1},\overline \partial_q\be_h^n\)- b\(\bu_h^{n-1},\bu_h^{n-1},\overline \partial_q\be_h^n\)\right|
\le C \|\nabla \be_h^{n-1}\|_0^2+\frac{1}{48}\|\overline \partial_q\be_h^n\|_0^2.
\end{eqnarray}
And, finally, for the last  term on the right-hand side of \eqref{eq;derit4}, we get
\begin{eqnarray}\label{aux4}
\lefteqn{
\left|\(\btau_1^n+\btau_2^n-\eta_q\(\btau_1^{n-1}+\btau_2^{n-1}\),\overline \partial_q\be_h^n\)\right|}\nonumber\\
&\le& C\(\|\btau_1^n+\btau_2^n\|_0^2+
\|\btau_1^{n-1}+\btau_2^{n-1}\|_0^2\)+\frac{1}{48}\|\overline \partial_q\be_h^n\|_0^2.
\end{eqnarray}
Inserting \eqref{aux1}, \eqref{aux2}, \eqref{aux3}, and \eqref{aux4} in \eqref{eq;derit4} yields
\begin{eqnarray}\label{eq;derit5}
\lefteqn{\nu \left|\bE^n\right|_{G,1}^2-\nu \left|\bE^{n-1}\right|_{G,1}^2+\frac{15}{16}\Delta t \|\overline \partial_q\be_h^n\|_0^2
-\frac{\eta_q}{2}\Delta t \|\overline \partial_q\be_h^n\|_0^2-
\frac{\eta_q}{2}\Delta t\|\overline \partial_q\be_h^{n-1}\|_0^2}\nonumber\\
&&\le C\Delta t\left(\|\nabla \be_h^n\|_0^2+\|\nabla \be_h^{n-1}\|_0^2+\|\btau_1^n+\btau_2^n\|_0^2+
\|\btau_1^{n-1}+\btau_2^{n-1}\|_0^2\right).\ 
\end{eqnarray}

{\em Second step of bounding the temporal derivative of the velocity difference.}
Adding terms in \eqref{eq;derit5} from $n=q+1,\ldots,M$ leads to 
\begin{eqnarray}\label{eq:derit6}
\lefteqn{\nu |\bE^M|_{G,1}^2+\frac{(15-16\eta_q)}{16}\Delta t\sum_{n=q+1}^M  \|\overline \partial_q\be_h^n\|_0^2
\le \nu \left|\bE^q\right|_{G,1}^2}\nonumber\\
&& +\frac{\eta_q}{2}\Delta t \|\overline \partial_q\be_h^q\|_0^2
+C \Delta t \sum_{n=q}^M \left(\|\nabla \be_h^n\|_0^2+\|\btau_1^n+\btau_2^n\|_0^2\right).
\end{eqnarray}
We will bound next the second term on the right-hand side of \eqref{eq:derit6}. To this end, we consider the error equation
\eqref{fully_error_gd} for $n=q$ and take $\bv_h=\overline\partial_q\be_h^q \in V_h^{\rm div}$
to get
\[
\|\overline\partial_q\be_h^q\|_0^2+\nu\(\nabla \be_h^q,\nabla\overline\partial_q\be_h^q\)+b\(\bs_h^q,\bs_h^q,\overline\partial_q\be_h^q\)
-b\(\bu_h^q,\bu_h^q,\overline\partial_q\be_h^q\)=\(\btau_1^q+\btau_2^q,\overline\partial_q\be_h^q\).
\]
Using the definition \eqref{eq_bdf_q} of the BDF methods, 
we can write the second term on the left-hand side above in the following way
\[
\nu\(\nabla \be_h^q,\nabla\overline\partial_q\be_h^q\)=\frac{\delta_0}{\Delta t}\nu \|\nabla \be_h^q\|_0^2
+\frac{1}{\Delta t}\nu\(\nabla \be_h^q,\sum_{i=1}^q \delta_i \nabla \be_h^{q-i}\),
\]
so that
\begin{eqnarray*}
\lefteqn{ \Delta t \|\overline\partial_q\be_h^q\|_0^2+{\delta_0}\nu \|\nabla \be_h^q\|_0^2
= -\nu\(\nabla \be_h^q,\sum_{i=1}^q \delta_i \nabla \be_h^{q-i}\)}\\
&&-\Delta t \left[b\(\bs_h^q,\bs_h^q,\overline\partial_q\be_h^q\)
-b\(\bu_h^q,\bu_h^q,\overline\partial_q\be_h^q\)\right]+\Delta t\(\btau_1^q+\btau_2^q,\overline\partial_q\be_h^q\).
\end{eqnarray*}
Utilizing standard techniques and already proved bounds, e.g., \eqref{aux2}, gives
\begin{eqnarray}\label{eq_ter1}
\Delta t \|\overline\partial_q\be_h^q\|_0^2+\nu\|\nabla \be_h^q\|_0^2\le C\left(\sum_{n=0}^{q-1}\nu\|\nabla \be_h^n\|_0^2+\Delta t \|\nabla \be_h^q\|_0^2+\Delta t\|\btau_1^q+\btau_2^q\|_0^2\right).
\end{eqnarray}
Inserting \eqref{eq_ter1} in \eqref{eq:derit6}, and since $15-16\eta_q\ge 2$, we arrive at
\begin{eqnarray*}
\nu |\bE^M|_{G,1}^2+\frac{\Delta t}{8}\sum_{n=q}^M  \|\overline \partial_q\be_h^n\|_0^2
&\le& \nu \left|\bE^q\right|_{G,1}^2+C\nu\sum_{n=1}^{q-1}\|\nabla \be_h^n\|_0^2\nonumber\\
&& +C\Delta t \sum_{n=q}^M \left(\|\nabla \be_h^n\|_0^2+\|\btau_1^n+\btau_2^n\|_0^2\right).
\end{eqnarray*}

Applying \eqref{equi_nor}, we conclude that there is a positive constant $C$ such that 
\begin{eqnarray}\label{eq:prindt}
\nu \|\nabla \be_h^M\|_0^2+\Delta t\sum_{n=q}^M  \|\overline \partial_q\be_h^n\|_0^2
\le C\left(\nu\sum_{n=1}^{q}\|\nabla \be_h^n\|_0^2+\sum_{n=q}^M \Delta t\left(\|\nabla \be_h^n\|_0^2+\|\btau_1^n+\btau_2^n\|_0^2\right)\right).
\end{eqnarray}
Again, it will be assumed that the initial conditions are sufficiently accurate, i.e., that 
\begin{equation}\label{con_ini_norm1}
\sum_{n=1}^{q-1}\nu\|\nabla \be_h^n\|_0^2\le C_{\mathrm{ic}} \(h^{2k}+(\Delta t)^{2q}\).
\end{equation}
The term $\nu \|\nabla \be_h^q\|_0^2$ was already bounded in \eqref{eq_ter1} by the initial conditions
and the truncation errors. 
To bound the first truncation error, we argue as in \eqref{cota_trun_1}--\eqref{cota_trun_3} and apply \eqref{er_pro_u} to show
\begin{equation}\label{tau1_nor0}
\sum_{n=q}^M\Delta t \|\btau_1^n\|_{0}^2\le C h^{2k}\Delta t\sum_{n=q}^M\(\|\bu_t^n\|_k^2+\|p_t^n\|_{k-1}^2\)
+C (\Delta t)^{2q}\int_0^T\|\partial_t^{q+1}\bu(s)\|_0^2 \ \di s.
\end{equation}
For the other truncation error, we use the approach presented in \eqref{asin3} and \eqref{asin4},
with the difference of getting on the right-hand side $\|\nabla(\bs_h^n-\bu^n)\|_0$
instead of $\|\bs_h^n-\bu^n\|_0$ and $\|\bv_h\|_0$ instead of $\|\nabla \bv\|_0$, i.e., of changing 
the role of these terms in the derivation of the bound, to prove
\begin{equation}\label{tau2_nor0}
\sum_{n=q}^M\Delta t \|\btau_2^n\|_{0}^2\le C h^{2k}\Delta t\sum_{n=q}^M\(\|\bu^n\|_{k+1}^2+\|p^n\|_{k}^2\).
\end{equation}
Applying \eqref{eq:cota_fin}, \eqref{con_ini_norm1}, \eqref{tau1_nor0}, and \eqref{tau2_nor0} 
at the right-hand side of \eqref{eq:prindt} 
gives
\begin{eqnarray}\label{eq:prindt2}
\lefteqn{\nu \|\nabla \be_h^M\|_0^2+\Delta t\sum_{n=q}^M  \|\overline \partial_q\be_h^n\|_0^2
\le C\(C_{\mathrm{ic}}+\frac{C_{\mathrm{e}}}{\nu}\) \left(h^{2k}+(\Delta t)^{2q}\right)}\nonumber\\
&&+C h^{2k}\Delta t\left(\sum_{n=q}^M \|\bu_t^n\|_k^2+\|\bu^n\|_{k+1}^2+\|p_t^n\|_{k-1}^2+\|p^n\|_{k}^2\right)+C (\Delta t)^{2q}\int_0^T\|\partial_t^{q+1}\bu(s)\|_0^2 \ \di s\nonumber\\
&\le& C_{\rm et}\left(h^{2k}+(\Delta t)^{2q}\right).
\end{eqnarray}

{\em Insertion of the individual bounds.} From \eqref{co_pre1} and the bounds \eqref{eq:press_02},
\eqref{eq:press_04}, and \eqref{eq:prindt2}, we finally obtain
\begin{equation*}
\Delta t \sum_{n=q}^M \|p_h^n-p_h^{s,n}\|_0^2\le C\(h^{2k}+(\Delta t )^{2q}\).
\end{equation*}

{\em Application of the triangle inequality.} Like at the end of the proof of Theorem~\ref{thm:velo},
the triangle inequality finishes the proof. 
\end{proof}

\begin{remark}\label{remark2} Concerning the time-step restrictions for getting the error bound for the pressure, apart from those assumed for the velocity, see Remark~\ref{remark1}, we need to assume
\eqref{cota_t_pre} which implies assumption $\Delta t \le C h^{3/(2q)}$, which in the case $d=2$ is slightly stronger than assumption
$\Delta t \le C h^{d/(2q)}$ in Remark~\ref{remark1}.
\end{remark}

\section{Error analysis of the method. Robust case}\label{sec:6}

In this section we consider the standard Galerkin method with grad-div stabilization. As shown in \cite{nos_grad_div,nos_bdf}
the use of grad-div stabilization allows to prove bounds with constants independent of inverse powers of the viscosity coefficient.
Those bounds were already shown in \cite{nos_bdf} for the BDF-2 method. In this section we prove robust error bounds for the non $A$-stable higher order methods BDF-$q$, $q\in\{3,4,5\}$,
thanks to Lemma~\ref{lema_bosco}. As already mentioned in the introduction, the
derivation of robust bounds can be considered as  a necessary condition for a good performance of a method in realistic problems. 

\begin{Theorem}[Robust error bound for the velocity]\label{thm:velob}
Assume that 
\begin{eqnarray*}
&&\bu \in L^\infty(0,T;H^2(\Omega)^d) \cap l^2(0,T;H^{k+1}(\Omega)^d), \
p \in l^2(0,T;H^{k}(\Omega)),\nonumber\\
&&\bu_t\in  H^{q+1}(0,T;H^1(\Omega)^d) \cap l^2(0,T;H^k(\Omega)^d).\
\nonumber
\end{eqnarray*}
Let the time step be sufficiently small, where the concrete restrictions are given in \eqref{CFL1}, \eqref{res_t}, 
\eqref{restri_delta}, \eqref{deltatul}, and \eqref{cond_htb} below in the proof. In addition, it will be assumed that the initial conditions are computed sufficiently accurately, concretely, 
that there exits a constant $C_{\mathrm{ic}}>0$ such that
\begin{equation}\label{inib}
\sum_{n=0}^{q-1} \|\bs_h^{m,n}-\bu_h^n\|_0^2\le C_{\mathrm{ic}} \(h^{2k}+(\Delta t)^{2q}\),
\end{equation}
where $\bs_h^{m,n}$ is the velocity component of the Stokes projection. Then the following 
estimate of the velocity error holds
\begin{equation}\label{eq:velo_estb}
\|\bu(T) - \bu_h^M\|_0^2 +  \Delta t \nu\sum_{n=q}^M\left\|\nabla \(\bu(t_n) - \bu_h^n\)\right\|_0^2 \le
C \(h^{2k}+(\Delta t)^{2q}\),
\end{equation}
where $k$ is the degree of the piecewise polynomials of the velocity space and the  constant
on the right-hand side of \eqref{eq:velo_estb} depends on some norms of the velocity and pressure but not explicitly on $\nu^{-1}$.  
\end{Theorem}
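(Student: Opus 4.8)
The plan is to follow the architecture of the proof of Theorem~\ref{thm:velo}, but to compare the discrete velocity $\bu_h^n$ with the \emph{modified} Stokes projection $(\bs_h^{m,n},l_h^n)$ associated with the right-hand side \eqref{eq:stokes_rhs_g}, whose error estimates \eqref{eq:cotanewpro}, \eqref{eq:cotanewpropre}, \eqref{cotas_s_h_mod}, and \eqref{nabla_sh_mod} carry constants that do not depend on $\nu^{-1}$; this is what ultimately makes the bound robust. Setting $\be_h^n=\bs_h^{m,n}-\bu_h^n$, which lies in $V_h^{\rm div}$, and testing \eqref{eq:bdfq_grad_div} only against discretely divergence-free functions, I would first establish the error equation: for all $\bv_h\in V_h^{\rm div}$,
\[
(\overline\partial_q\be_h^n,\bv_h)+\nu(\nabla\be_h^n,\nabla\bv_h)+\bigl[b(\bs_h^{m,n},\bs_h^{m,n},\bv_h)-b(\bu_h^n,\bu_h^n,\bv_h)\bigr]+\mu(\nabla\cdot\be_h^n,\nabla\cdot\bv_h)=(\btau_1^n+\btau_2^n,\bv_h)+R^n(\bv_h),
\]
where $\btau_1^n=\overline\partial_q\bs_h^{m,n}-\bu_t^n$, $\btau_2^n$ is the convective consistency term built from $\bs_h^{m,n}$ and $\bu^n$ as in \eqref{lostrun}, and $R^n(\bv_h)=(p^n-\pi_h p^n,\nabla\cdot\bv_h)+\mu(\nabla\cdot\bs_h^{m,n},\nabla\cdot\bv_h)$ collects the new pressure- and grad-div-consistency contributions; the projection pressure $l_h^n$ disappears against $\bv_h\in V_h^{\rm div}$. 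The grad-div term on the left supplies the coercive quantity $\mu\|\nabla\cdot\be_h^n\|_0^2$, which is decisive: it lets me control every divergence factor appearing below without generating a power of $\nu^{-1}$.

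Next I would test with $\bv_h=\be_h^n-\eta_q\be_h^{n-1}$ and apply Lemma~\ref{lema_3_6} together with Lemma~\ref{lem:mult_tech} for $H=L^2(\Omega)^d$, producing, as in \eqref{eq:prin}, the telescoping $G$-norm $|\bE_h^n|_G^2-|\bE_h^{n-1}|_G^2$, the viscous term $\nu\Delta t\|\nabla\be_h^n\|_0^2$, and the grad-div energy on the left. For the convective difference I split $b(\bs_h^{m,n},\bs_h^{m,n},\cdot)-b(\bu_h^n,\bu_h^n,\cdot)=b(\be_h^n,\bs_h^{m,n},\cdot)+b(\bu_h^n,\be_h^n,\cdot)$ and estimate each piece with the convective bounds of Lemma~\ref{nonli_bound}, the robust bounds \eqref{cotas_s_h_mod}, \eqref{nabla_sh_mod} for $\bs_h^{m,n}$, and the grad-div energy to absorb divergence factors, exactly in the spirit of \cite{nos_grad_div}; this yields contributions of the form $\Delta t\,\tilde c\,\|\be_h^n\|_0^2$ with $\tilde c$ independent of $\nu$. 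The genuinely new feature comes from the cross terms carrying the factor $-\eta_q\be_h^{n-1}$: writing $\be_h^{n-1}=\be_h^n-D\be_h^n$ with $D\be_h^n=\be_h^n-\be_h^{n-1}$, the diagonal parts merge with the $\be_h^n$-terms (one of them vanishing by the skew-symmetry \eqref{eq:skew}), while there survives a residual of the type $\eta_q\,\Delta t\sum_n b(\be_h^n,\bs_h^{m,n},D\be_h^n)$ (together with a similar term carrying $\bu_h^n$) — a sum over first-order finite differences that can be neither absorbed into the $L^2$-energy nor, robustly, into the $\nu$-energy.

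Controlling this residual is the principal obstacle, and it is precisely the purpose of Lemma~\ref{lema_bosco}. Using \eqref{cota_non_3}, \eqref{cota_non_4} and the robust projection bounds, the residual is bounded by $C\,\Delta t\sum_n\|\be_h^n\|_0\|\nabla D\be_h^n\|_0$ with $C$ independent of $\nu$, and after a Young step the factor $\|\nabla D\be_h^n\|_0$ is converted, through the inverse inequality \eqref{inv}, into $h^{-1}\|D\be_h^n\|_0$. To control the resulting sum $\sum_n\|D\be_h^n\|_0^2$ I would derive an auxiliary estimate by testing the error equation a second time with $D\be_h^n\in V_h^{\rm div}$: by Lemma~\ref{lema_bosco} with $H=L^2(\Omega)^d$,
\[
\Delta t(\overline\partial_q\be_h^n,D\be_h^n)\ge s_q\|D\be_h^n\|_0^2+|D\bE_h^n|_{G^q}^2-|D\bE_h^{n-1}|_{G^q}^2,
\]
so that summation yields the coercive control $s_q\sum_n\|D\be_h^n\|_0^2$; the viscous and grad-div terms telescope into positive left-hand side quantities, and the convective and data terms tested against $D\be_h^n$ are bounded as before, again using \eqref{inv} and the robust projection bounds. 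Coupling the main estimate with this auxiliary one, the finite-difference residual is absorbed at the price of the CFL-type condition $\Delta t\,h^{-2}\le C$, which is exactly the balance required so that $(\Delta t\,h^{-2})\sum_n\|D\be_h^n\|_0^2$ be controlled by $\Delta t\sum_n\|\be_h^n\|_0^2$; this recovers the restriction already found in \cite{nos_bdf} for BDF-2. I expect the careful bookkeeping of the constants in this coupling — ensuring that no $\nu^{-1}$ slips in and that $s_q$ dominates the absorbed difference terms — to be the most delicate part of the argument.

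It then remains to bound the data terms and close the argument. The truncation error $\btau_1^n$ is split as in \eqref{cota_trun_1}--\eqref{cota_trun_3}, combining the BDF consistency estimate \cite[(B.3)]{bdf_stokes} with the $\nu$-uniform projection bound \eqref{eq:cotanewpro}, while $\btau_2^n$ is handled as in \eqref{asin2}--\eqref{asin4} using \eqref{cota_non_4}, \eqref{cota_non_5}, and \eqref{cotas_s_h_mod}. The new consistency term $R^n$ is estimated against the coercive grad-div and $\nu$-energies via $\|p^n-\pi_h p^n\|_0\le Ch^{k}\|p^n\|_k$ from \eqref{eq:pressurel2} and $\mu^{1/2}\|\nabla\cdot\bs_h^{m,n}\|_0=O(h^{k})$ from \eqref{eq:cotanewpro}; these two contributions, which are only $O(h^{k})$, are responsible for the reduction of the spatial order from $h^{k+1}$ to $h^{k}$ in the robust case. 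After invoking \eqref{equi_nor} and the discrete Gronwall Lemma~\ref{gronwall2}, I would justify the standing assumption $\|\be_h^n\|_\infty\le c_\infty$ by induction, using the inverse inequality \eqref{inv} and the time-step restrictions assumed in the theorem, and conclude with the triangle inequality together with \eqref{eq:cotanewpro}, obtaining \eqref{eq:velo_estb} with a constant independent of $\nu^{-1}$.
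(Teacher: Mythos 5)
Your proposal is correct and follows essentially the same route as the paper: comparison with the modified Stokes projection, a main energy estimate with the multiplier $\be_h^n-\eta_q\be_h^{n-1}$ split as $\rho_q\be_h^n+\eta_q D\be_h^n$, an auxiliary estimate obtained by testing with $D\be_h^n$ and invoking Lemma~\ref{lema_bosco} to control $\sum_n\|D\be_h^n\|_0^2$, absorption of the difference terms under the CFL-type condition $\Delta t\,h^{-2}\le C$, grad-div absorption of the divergence factors to avoid $\nu^{-1}$, Gronwall, and an inductive verification of the $L^\infty$ a priori bound. The only cosmetic deviations are the citation of Gronwall Lemma~\ref{gronwall2} where the paper uses Lemma~\ref{gronwall}, and the placement of the inverse inequality on $\|\nabla D\be_h^n\|_0$ rather than on $\|\nabla\be_h^n\|_0$; neither affects the argument.
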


\begin{proof}
{\em Derivation of the error equation.}
We take
in \eqref{eq:bdfq_grad_div} a positive grad-div parameter $\mu>0$.
For the error analysis we will compare $\bu_h^n$ with the  velocity component of the modified Stokes projection 
$\bs_h^{m,n} := \bs_h^m(t_n)$, for which error bounds with constants independent of the inverse of $\nu^{-1}$ hold, see \eqref{eq:cotanewpro}--\eqref{nabla_sh_mod}.
Using \eqref{eq:gal_stokes}, \eqref{eq:stokes_rhs_g}, and $\nabla \cdot  \bu^{n} = 0$, one derives 
for $n\ge q$, $\bv_h\in V_h^{\rm div}$, and $\pi_h^n=\pi_h(t_n)$, see \eqref{eq:pressurel2},
\begin{eqnarray}\label{eq:fullydiscrete_shm}
\lefteqn{\(\overline \partial_q\bs_h^{m,n},\bv_h\)+\nu\(\nabla \bs_h^{m,n},\nabla \bv_h\)+b\(\bs_h^{m,n},\bs_h^{m,n},\bv_h\)
+\mu\(\nabla \cdot \bs_h^{m,n},\nabla \cdot \bv_h\)}\nonumber\\
&=&\(\boldsymbol f^{n},\bv_h\)-\(\nabla \cdot \bv_h,p^{n}-\pi_h^{n}\)
+\mu(\nabla \cdot (\bs_h^{m,n}-\bu^{n}),\nabla \cdot \bv_h)\\
&&+\(\overline \partial_q\bs_h^{m,n}-\bu_t^{n},\bv_h\)
+b\(\bs_h^{m,n},\bs_h^{m,n},\bv_h\)-b\(\bu^{n},\bu^{n},\bv_h\).
\nonumber
\end{eqnarray}
Denoting by 
$
\be_h^n=\bs_h^{m,n}-\bu_h^n \in V_h^{\rm div}
$
and subtracting \eqref{eq:bdfq_grad_div} from \eqref{eq:fullydiscrete_shm} we get
\begin{eqnarray}\label{fully_error_gd_b}
\lefteqn{\hspace*{-9em}\(\overline \partial_q\be_h^{n},\bv_h\)+\nu\(\nabla \be_h^{n},\nabla \bv_h\)
+b\(\bs_h^{m,n}, \bs_h^{m,n},\bv_h\)-b\(\bu_h^n,\bu_h^n,\bv_h)\)}\nonumber\\
+\mu\(\nabla \cdot \be_h^{n},\nabla \cdot \bv_h\) &=&\(\btau_1^{n}+\btau_2^{n},\bv_h\)+\(\btau_3^{n},\nabla \cdot \bv_h\),
\end{eqnarray}
with  $\btau_1^{n}$ and $\btau_2^{n}$ as in \eqref{lostrun}, with $\bs_h^n$ replaced by $\bs_h^{m,n}$, and 
\begin{equation*}
\btau_3^{n}=-\(p^{n}-\pi_h^{n}\)+\mu\nabla \cdot\(\bs_h^{m,n}-\bu^{n}\).
\end{equation*} 

{\em First step. Deriving a bound for $\sum_{n=q+1}^M\|D \be_h^n\|_0^2.$}
We first take in \eqref{fully_error_gd_b} $\bv_h=D\be_h^n$. Applying \eqref{cota_Aesta} from Lemma~\ref{lema_bosco} 
for $H= L^2(\Omega)^d$ yields 
\begin{eqnarray*}
 \lefteqn {s_q \|D \be_h^n\|_0^2+\|D\bE^n\|_{G^q}^2-\|D\bE^{n-1}\|_{G^q}^2+{\Delta t }\nu\(\nabla \be_h^n,\nabla D\be_h^n\)
 +{\Delta t }\mu\(\nabla \cdot\be_h^n,\nabla \cdot D\be_h^n\)}\\
 &\le & {\Delta t }\left[ b\(\bu_h^n,\bu_h^n,D\be_h^n\)-b\(\bs_h^{m,n}, \bs_h^{m,n},D\be_h^n\)\right)]+{\Delta t }\(\btau_1^{n}+\btau_2^{n},D\be_h^n\)+{\Delta t }\(\btau_3^{n},\nabla \cdot D\be_h^n\).
\end{eqnarray*}
Since for any function $\bv$ it holds
\[
(\bv^n,D\bv^n)=\frac{1}{2}\|\bv^n\|_0^2-\frac{1}{2}\|\bv^{n-1}\|_0^2+\frac{1}{2}\|\bv^n-\bv^{n-1}\|_0^2\ge
\frac{1}{2}\|\bv^n\|_0^2-\frac{1}{2}\|\bv^{n-1}\|_0^2,
\]
we obtain
\begin{eqnarray}
\lefteqn{s_q\|D \be_h^n\|_0^2+\|D\bE^n\|_{G^q}^2-\|D\bE^{n-1}\|_{G^q}^2+\frac{\Delta t }{2}\nu\|\nabla \be_h^n\|_0^2
 -\frac{\Delta t }{2}\nu\|\nabla \be_h^{n-1}\|_0^2}\nonumber\\
 &&+\frac{\Delta t }{2}\mu\|\nabla \cdot\be_h^n\|_0^2
 -\frac{\Delta t }{2}\mu\|\nabla \cdot\be_h^{n-1}\|_0^2\label{er_ro}\\
 &\le& {\Delta t }\left[ b\(\bu_h^n,\bu_h^n,D\be_h^n\)-b\(\bs_h^{m,n}, \bs_h^{m,n},D\be_h^n\)\right]+{\Delta t }\(\btau_1^{n}+\btau_2^{n},D\be_h^n\)+{\Delta t }\(\btau_3^{n},\nabla \cdot D\be_h^n\).\nonumber
\end{eqnarray}

To bound the nonlinear term, arguing as in \eqref{nonli3med} and applying \eqref{cota_non_1}, \eqref{cota_non_3},
and \eqref{cota_non_2}, we find that 
\begin{eqnarray}\label{non_ca_1}
\lefteqn{\left|b\(\bu_h^n,\bu_h^n,D\be_h^n\)-b\(\bs_h^{m,n}, \bs_h^{m,n},D\be_h^n)\)\right|} \nonumber \\
&\le& \left|b\(\be_h^n,\be_h^n,D\be_h^n\)\right|+\left|b\(\bs_h^{m,n},\be_h^n,D\be_h^n\)\right| +\left|b\(\be_h^n,\bs_h^{m,n},D\be_h^n\)\right|\\
&\le& \left(2\|\be_h^n\|_\infty+
{2}\|\bs_h^{m,n}\|_\infty+C\(\|\nabla \cdot \bs_h^{m,n}\|_{{L^{2d/(d-1)}}}+\|\nabla \bs_h^{m,n}\|_{L^{2d/(d-1)}}\)\right)\|\nabla \be_h^n\|_0
\|D\be_h^n\|_0.\nonumber
\end{eqnarray}
It will be proved in the final part of this proof that 
for a fixed constant $c_\infty$ holds
\begin{equation}\label{cotaero}
\|\be_h^n\|_\infty\le c_\infty,\quad n=0,\ldots,M,
\end{equation}
whenever $\Delta t$ and $h$ are sufficiently small. 
Inserting \eqref{cotaero} and \eqref{cotas_s_h_mod} in \eqref{non_ca_1}, we reach
\[
\left|b\(\bu_h^n,\bu_h^n,D\be_h^n)-b(\bs_h^{m,n}, \bs_h^{m,n},D\be_h^n)\)\right|\le c_{1,\bu}\|\nabla \be_h^n\|_0\|D\be_h^n\|_0,
\]
where
\[
c_{1,\bu}=2c_\infty+{4} \esssup{0\le t\le T}\|\bu(t)\|_\infty +C\ \esssup{0\le t\le T}\|\bu(t)\|_2.
\]
As a consequence, applying the inverse inequality \eqref{inv} and Young's inequality gives
\begin{eqnarray*}
{\Delta t }\left| b\(\bu_h^n,\bu_h^n,D\be_h^n)-b(\bs_h^{m,n}, \bs_h^{m,n},D\be_h^n\)\right| 
&\le & \Delta tc_{1,\bu}{c_{\mathrm{inv}}}h^{-1}\|\be_h^n\|_0\|D\be_h^n\|_0\\
&\le& \frac{(\Delta t)^2}{s_q}c_{1,\bu}^2{c_{\mathrm{inv}}^2}h^{-2}\|\be_h^n\|_0^2+\frac{s_q}{4}\|D\be_h^n\|_0^2. \nonumber
\end{eqnarray*}

Assuming
\begin{equation}\label{CFL1}
\frac{\Delta t}{s_q} c_{1,\bu}{c_{\mathrm{inv}}^2}h^{-2}<1, 
\end{equation}
we finally obtain
\begin{equation}\label{eq:b1}
{\Delta t }\left| b\(\bu_h^n,\bu_h^n,D\be_h^n\)-b\(\bs_h^{m,n}, \bs_h^{m,n},D\be_h^n\)\right| \le \Delta t c_{1,\bu}\|\be_h^n\|_0^2+\frac{s_q}{4}\|D\be_h^n\|_0^2.
\end{equation}

For the first truncation errors, we estimate
\[
{\Delta t }\left|\(\btau_1^{n}+\btau_2^{n},D\be_h^n\)\right|\le \Delta t\|\btau_1^{n}+\btau_2^{n}\|_0^2+\frac{\Delta t}{4}\|D\be_h^n\|_0^2,
\]
and assuming
\begin{equation}\label{res_t}
{\Delta t}<s_q,
\end{equation}
we reach
\begin{eqnarray}\label{eq:t1}
{\Delta t }\left|\(\btau_1^{n}+\btau_2^{n},D\be_h^n\)\right|\le \Delta t\|\btau_1^{n}+\btau_2^{n}\|_0^2+\frac{s_q}{4}\|D\be_h^n\|_0^2.
\end{eqnarray}
Using the inverse inequality \eqref{inv} to bound the last truncation error yields 
\[
{\Delta t }\(\btau_3^{n},\nabla \cdot D\be_h^n\)\le \Delta t \|\btau_3^{n}\|_0 c_{\mathrm{inv}} h^{-1}\|D\be_h^n\|_0
\le \frac{(\Delta t)^2}{s_q}{c_{\mathrm{inv}}^2}h^{-2} \|\btau_3^{n}\|_0 ^2+\frac{s_q}{4}\|D\be_h^n\|_0^2.
\]
And, under condition \eqref{CFL1}, assuming $c_{1,\bu}\ge 1$, we conclude
\begin{eqnarray}\label{eq:t2}
{\Delta t }\(\btau_3^{n},\nabla \cdot D\be_h^n\)
\le {\Delta t}\|\btau_3^{n}\|_0 ^2+\frac{s_q}{4}\|D\be_h^n\|_0^2.
\end{eqnarray}
Inserting \eqref{eq:b1}, \eqref{eq:t1}, and \eqref{eq:t2} in \eqref{er_ro}, we obtain
\begin{align*}
 &\frac{s_q}{4}\|D \be_h^n\|_0^2+\|D\bE^n\|_{G^q}^2-\|D\bE^{n-1}\|_{G^q}^2+\frac{\Delta t }{2}\nu\|\nabla \be_h^n\|^2
 -\frac{\Delta t }{2}\nu\|\nabla \be_h^{n-1}\|^2\nonumber\\
 &\quad +\frac{\Delta t }{2}\mu\|\nabla \cdot\be_h^n\|^2
 -\frac{\Delta t }{2}\mu\|\nabla \cdot\be_h^{n-1}\|^2 \le\Delta t c_{1,\bu}\|\be_h^n\|_0^2+\Delta t\|\btau_1^{n}+\btau_2^{n}\|_0^2
 +{\Delta t}\|\btau_3^{n}\|_0 ^2.\nonumber
\end{align*}
Adding from $n=q+1$ to $M$ and neglecting nonnegative terms on the left-hand side gives
\begin{eqnarray*}
\sum_{n=q+1}^M\|D \be_h^n\|_0^2 &\le& \frac{2}{s_q}\left(2\|D\bE^{q}\|_{G^q}^2+{\Delta t }\nu\|\nabla \be_h^q\|_0^2+{\Delta t }\mu\|\nabla \cdot\be_h^q\|_0^2\right)\\
&&+\frac{4\Delta t}{s_q}c_{1,\bu}\sum_{n=q+1}^M \|\be_h^n\|_0^2+\frac{4\Delta t}{s_q}\sum_{n=q+1}^M
\left(\|\btau_1^{n}+\btau_2^{n}\|_0^2+\|\btau_3^{n}\|_0 ^2\right).
\end{eqnarray*}
Denoting by $\lambda_{\rm max}^{(q)}$ the largest eigenvalue of $G^q$, we obtain
with \eqref{norma_G} and \eqref{equi_nor} 
\[
\|D\bE^{q}\|_{G^q}^2\le \lambda_{\rm max}^{(q)}\sum_{n=1}^q \|D \be_h^{n}\|_0^2\le C\sum_{n=0}^q \|\be_h^n\|_0^2,
\]
so that 
\begin{eqnarray}\label{eq:dif_def}
\sum_{n=q+1}^M\|D \be_h^n\|_0^2&\le& C\sum_{n=0}^{q-1}\|\be_h^n\|_0^2+C\left(\|\be_h^q\|_0^2+\Delta t\nu\|\nabla \be_h^q\|_0^2+\mu\Delta t\|\nabla \cdot\be_h^q\|_0^2\right)\nonumber\\
&& +\frac{4\Delta t}{s_q}c_{1,\bu}\sum_{n=q+1}^M \|\be_h^n\|_0^2+\frac{4\Delta t}{s_q}\sum_{n=q+1}^M
\left(\|\btau_1^{n}+\btau_2^{n}\|_0^2+\|\btau_3^{n}\|_0 ^2\right).
\end{eqnarray}

{\em Second step. Estimation of the error.} The following error analysis follows principally the one from 
the proof of Theorem~\ref{thm:velo}, with some differences to exploit the appearance of the grad-div stabilization. 
We first observe that
\begin{equation}\label{eq:rho_q}
\be_h^n-\eta_q\be_h^{n-1}=(1-\eta_q)\be_h^n+\eta_q D \be_h^n=\rho_q\be_h^n+\eta_q D \be_h^n \quad
\mbox{with}\quad 0<\rho_q=1-\eta_q<1.
\end{equation}
Taking in \eqref{fully_error_gd_b} $\bv_h=\be_h^n-\eta_q\be_h^{n-1}$ yields
\begin{eqnarray*}
\lefteqn{
\(\overline \partial_q\be_h^{n},\be_h^n-\eta_q\be_h^{n-1}\)+\nu\rho_q\|\nabla \be_h^{n}\|_0^2+\mu\rho_q\|\nabla \cdot \be_h^{n}\|_0^2+\nu\frac{\eta_q}{2}\|\nabla \be_h^{n}\|_0^2+\mu\frac{\eta_q}{2}\|\nabla \cdot\be_h^{n}\|_0^2} \\
&\le& \nu\frac{\eta_q}{2}\|\nabla \be_h^{n-1}\|_0^2+\mu\frac{\eta_q}{2}\|\nabla \cdot\be_h^{n-1}\|_0^2 +b\(\bu_h^n,\bu_h^n,\be_h^n-\eta_q\be_h^{n-1}\)\\
&&-b\(\bs_h^{m,n}, \bs_h^{m,n},\be_h^n-\eta_q\be_h^{n-1}\)+\(\btau_1^{n}+\btau_2^{n},\be_h^n-\eta_q\be_h^{n-1}\)+\(\btau_3^{n},\nabla \cdot \(\be_h^n-\eta_q\be_h^{n-1}\)\).
\end{eqnarray*}
Applying the definition of $\overline \partial_q$ from 
\eqref{eq_bdf_q} and Lemma~\ref{lema_3_6} in combination with Lemma~\ref{lem:mult_tech}
with $H=L^2(\Omega)$ gives
\begin{eqnarray}\label{eq:prinro}
\lefteqn{\hspace*{-4.5em}\left|\bE_h^n\right|_G^2-\left|\bE_h^{n-1}\right|_G^2+\nu\rho_q\Delta t \|\nabla \be_h^{n}\|_0^2+\mu\rho_q\Delta t \|\nabla \cdot \be_h^{n}\|_0^2+\nu\Delta t \frac{\eta_q}{2}\|\nabla \be_h^{n}\|_0^2+\mu\Delta t \frac{\eta_q}{2}\|\nabla \cdot\be_h^{n}\|_0^2}\nonumber\\
&\le& \nu\Delta t \frac{\eta_q}{2}\|\nabla \be_h^{n-1}\|_0^2+\mu\Delta t \frac{\eta_q}{2}\|\nabla \cdot\be_h^{n-1}\|_0^2\nonumber\\
&&+\Delta t \left[b\(\bu_h^n,\bu_h^n,\be_h^n-\eta_q\be_h^{n-1}\)-b\(\bs_h^{m,n}, \bs_h^{m,n},\be_h^n-\eta_q\be_h^{n-1}\)\right]\nonumber\\
&&+\Delta t \(\btau_1^{n}+\btau_2^{n},\be_h^n-\eta_q\be_h^{n-1}\) +\Delta t \(\btau_3^{n},\nabla \cdot(\be_h^n-\eta_q\be_h^{n-1})\).
\end{eqnarray}

To bound the nonlinear term, we first apply \eqref{eq:rho_q} to get
\begin{eqnarray}\label{nonli1b}
\lefteqn{b\(\bu_h^n,\bu_h^n,\be_h^n-\eta_q\be_h^{n-1}\)-b\(\bs_h^{m,n}, \bs_h^{m,n},\be_h^n-\eta_q\be_h^{n-1}\)}\\
&=&\rho_q\(b\(\bu_h^n,\bu_h^n,\be_h^n)-b(\bs_h^{m,n}, \bs_h^{m,n},\be_h^n\)\)
+\eta_q\(b\(\bu_h^n,\bu_h^n,D\be_h^n)-b(\bs_h^{m,n}, \bs_h^{m,n},D\be_h^n\)\).\nonumber
\end{eqnarray}
For the first term on the right-hand side, using the first step of \eqref{nonli2}, the  skew-symmetric property of the bilinear term, and applying standard estimates, we obtain  
\begin{eqnarray*}
\left|b\(\bu_h^n,\bu_h^n,\be_h^n)-b(\bs_h^{m,n}, \bs_h^{m,n},\be_h^n\)\right|&\le&\|\nabla \bs_h^{m,n}\|_\infty\|\be_h^n\|_0^2+
\| \bs_h^{m,n}\|_\infty\|\nabla \cdot\be_h^n\|_0\|\be_h^n\|_0\\
&\le&\left(\|\nabla \bs_h^{m,n}\|_\infty+\frac{8}{\mu\rho_q}\|\bs_h^{m,n}\|_\infty^2\right)\|\be_h^n\|_0^2+\frac{\mu\rho_q}{32}\|\nabla \cdot\be_h^n\|_0^2.\nonumber
\end{eqnarray*}
Notice that here the grad-div stabilization term is utilized to avoid a dependency of the 
error bound on inverse powers of the viscosity coefficient. 
Applying \eqref{cotas_s_h_mod} and \eqref{nabla_sh_mod} leads to 
\begin{equation}\label{nonli2b}
\left|b\(\bu_h^n,\bu_h^n,\be_h^n)-b(\bs_h^{m,n}, \bs_h^{m,n},\be_h^n\)\right|\le c_{2,\bu}\|\be_h^n\|_0^2+\frac{\mu\rho_q}{32}\|\nabla \cdot\be_h^n\|_0^2,
\end{equation}
where
$$
c_{2,\bu}=\(C_\infty+1\)\esssup{0\le t\le T}\|\nabla \bu(t)\|_\infty+\frac{32}{\mu\rho_q}\esssup{0\le t\le T}\|\bu(t)\|_\infty^2.
$$
The second term in \eqref{nonli1b} was already bounded in \eqref{eq:b1}, so that
\begin{equation}\label{nonli3b}
\left| b\(\bu_h^n,\bu_h^n,D\be_h^n\)-b\(\bs_h^{m,n}, \bs_h^{m,n},D\be_h^n\)\right| \le  c_{1,\bu}\|\be_h^n\|_0^2+\frac{s_q}{4\Delta t }\|D\be_h^n\|_0^2.
\end{equation}
Summarizing \eqref{nonli1b}, \eqref{nonli2b}, and \eqref{nonli3b}, and taking into account that $\rho_q,\eta_q<1$,
we get
\begin{eqnarray}\label{non_rodef}
\lefteqn{\Delta t \left[b\(\bu_h^n,\bu_h^n,\be_h^n-\eta_q\be_h^{n-1}\)-b\(\bs_h^{m,n}, \bs_h^{m,n},\be_h^n-\eta_q\be_h^{n-1}\)\right]}\nonumber\\
&\le& \(c_{1,\bu}+c_{2,\bu}\)\Delta t\|\be_h^n\|_0^2+\frac{\mu\rho_q}{32}\Delta t \|\nabla \cdot\be_h^n\|_0^2+\frac{s_q}{4}{\|D \be_h^n\|_0^2}.
\end{eqnarray}

For estimating the truncation errors in \eqref{eq:prinro}, we use \eqref{eq:rho_q}, the Cauchy--Schwarz inequality, 
Young's inequality, the inverse inequality \eqref{inv},
$\rho_q+\eta_q=1$, condition \eqref{CFL1}, and assume $c_{1,\bu}\ge1$,
\begin{eqnarray}\label{tru_ro}
\lefteqn{\Delta t\left| \(\btau_1^{n}+\btau_2^{n},\be_h^n-\eta_q\be_h^{n-1}\) +\Delta t \(\btau_3^{n},\nabla \cdot\(\be_h^n-\eta_q\be_h^{n-1}\)\)\right|}
\nonumber\\
& = &\Delta t  \left|\rho_q\(\btau_1^{n}+\btau_2^{n},\be_h^n\)+\rho_q\(\btau_3^{n},\nabla \cdot \be_h^n \)+\eta_q\(\btau_1^{n}+\btau_2^{n},D\be_h^n\)+\eta_q\(\btau_3^{n},\nabla \cdot D\be_h^n\)\right|\nonumber\\
&\le & \Delta t \frac{\rho_q}{2}\|\btau_1^{n}+\btau_2^{n}\|_0^2+\Delta t \frac{\rho_q}{2}\|\be_h^n\|_0^2
+\Delta t \frac{\eta_q}{2}\|\btau_1^{n}+\btau_2^{n}\|_0^2+\Delta t \frac{\eta_q}{2}\|D\be_h^n\|_0^2\nonumber\\
&&+\Delta t \frac{8\rho_q}{\mu}\|\btau_3^n\|_0^2+\Delta t \frac{\rho_q\mu}{32}\|\nabla \cdot\be_h^n\|_0^2
+\Delta t \eta_q \|\btau_3^n\|_0 c_{\rm inv}h^{-1}\|D \be_h^n\|_0\nonumber\\
&\le& \frac{\Delta t}{2} \|\btau_1^{n}+\btau_2^{n}\|_0^2+ \frac{\Delta t}{2}\|\be_h^n\|_0^2+\Delta t \frac{\mu\rho_q}{32}\|\nabla \cdot\be_h^n\|_0^2+\frac{1}{2}\(1+{\Delta t}\)\|D\be_h^n\|_0^2\nonumber\\
&&+\Delta t \(\frac{8\rho_q}{\mu}+\frac{s_q}{2}\)\|\btau_3^n\|_0^2.
\end{eqnarray}

Inserting \eqref{non_rodef} and \eqref{tru_ro} in \eqref{eq:prinro} and denoting by
\[
c^{\rm div}_q=\frac{15}{16}\rho_q,\ c_{3,\bu}=c_{1,\bu}+c_{2,\bu}+\frac{1}{2},\ c^{D}_q=\frac{s_q}{4}+ \frac{1}{2}(1+\Delta t),
\]
we reach
\begin{eqnarray*}
\lefteqn{\hspace*{-8em}\left|\bE_h^n\right|_G^2-\left|\bE_h^{n-1}\right|_G^2+\nu\rho_q\Delta t \|\nabla \be_h^{n}\|_0^2+\mu c_q^{\rm div}\Delta t \|\nabla \cdot \be_h^{n}\|_0^2
+\nu\Delta t \frac{\eta_q}{2}\|\nabla \be_h^{n}\|_0^2}\nonumber\\
+\mu\Delta t \frac{\eta_q}{2}\|\nabla \cdot\be_h^{n}\|_0^2
&\le& \nu\Delta t \frac{\eta_q}{2}\|\nabla \be_h^{n-1}\|_0^2+\mu\Delta t \frac{\eta_q}{2}\|\nabla \cdot\be_h^{n-1}\|_0^2+\Delta t c_{3,\bu}\|\be_h^n\|_0^2\nonumber\\
&& + c^D_q\|D \be_h^n\|_0^2+ \frac{\Delta t }{2}\|\btau_1^{n}+\btau_2^{n}\|_0^2+\Delta t  \(\frac{8\rho_q}{\mu}+\frac{s_q}{2}\)\|\btau_3^n\|_0^2.
\end{eqnarray*}
Adding terms from $n=q+1,\ldots,M$ and applying \eqref{equi_nor} yields
\begin{eqnarray*}
\lefteqn{\lambda_{\rm min}\|\be_h^M\|_0^2+\nu\rho_q\Delta t \sum_{n=q+1}^M\|\nabla \be_h^n\|_0^2+\mu c_q^{\rm div}\Delta t\sum_{n=q+1}^M\|\nabla \cdot\be_h^n\|_0^2 \le \lambda_{\rm max}\sum_{n=1}^{q-1}\|\be_h^n\|_0^2}\\
&& +\lambda_{\rm max}\|\be_h^q\|_0^2+\nu\Delta t \frac{\eta_q}{2}\|\nabla \be_h^{q}\|_0^2+\mu\Delta t \frac{\eta_q}{2}\|\nabla \cdot\be_h^{q}\|_0^2
+\Delta t c_{3,\bu}\sum_{n=q+1}^M\|\be_h^n\|_0^2\nonumber\\
&&+c^D_q\sum_{n=q+1}^M\|D \be_h^n\|_0^2+C \Delta t	\sum_{n=q+1}^M  \|\btau_1^{n}+\btau_2^{n}\|_0^2+C\Delta t	\sum_{n=q+1}^M \|\btau_3^n\|_0^2.
\end{eqnarray*}

{\em Third step. Joining estimations of two previous steps.}
Inserting \eqref{eq:dif_def}, we reach
\begin{eqnarray}\label{estasi}
\lefteqn{\lambda_{\rm min}\|\be_h^M\|_0^2+\nu\rho_q\Delta t \sum_{n=q+1}^M\|\nabla \be_h^n\|_0^2+\mu c_q^{\rm div}\Delta t\sum_{n=q+1}^M\|\nabla \cdot\be_h^n\|_0^2\le C\sum_{n=1}^{q-1}\|\be_h^n\|_0^2}\nonumber\\
&& +C\left(\|\be_h^q\|_0^2+\Delta t\nu\|\nabla \be_h^q\|_0^2+\mu\Delta t\|\nabla \cdot\be_h^q\|_0^2\right)
+\Delta t c_{4,\bu}\sum_{n=q+1}^M\|\be_h^n\|_0^2\nonumber\\
&&+C \Delta t	\sum_{n=q+1}^M  \|\btau_1^{n}+\btau_2^{n}\|_0^2 +C\Delta t	\sum_{n=q+1}^M \|\btau_3^n\|_0^2,
\end{eqnarray}
where
$c_{4,\bu}=c_{3,\bu}+\nicefrac{(4c_q^D c_{1,\bu})}{s_q}$.
To estimate the first three terms on the second line of \eqref{estasi}, we consider the error
equation \eqref{fully_error_gd_b} for $n=q$ and $\bv_h=\be_h^q$. Arguing similarly to \eqref{nq} yields
\begin{eqnarray}\label{nqb}
\lefteqn{\delta_0 \|\be_h^q\|_0^2+\Delta t \nu\|\nabla \be_h^{q}\|_0^2+\Delta t \mu \|\nabla \cdot \be_h^q\|_0^2\le
\Delta t\left|b\(\bs_h^{m,q}, \bs_h^{m,q},\be_h^q\)-b\(\bu_h^q,\bu_h^q,\be_h^q\)\right|}\nonumber\\
&&+\Delta t\|\btau_1^{q}+\btau_2^{q}\|_{0}\|\be_h^q\|_0+\Delta t \|\btau_3^q\|_0\|\nabla \cdot \be_h^q\|_0+\left\|\sum_{i=1}^q\delta_i\be_h^{q-i}\right\|_0\|\be_h^q\|_0.
\end{eqnarray}
Assuming 
\begin{equation}\label{restri_delta}
\Delta t \le \frac{\delta_0}{4c_{2,\bu}}
\end{equation}
gives, in combination with \eqref{nonli2b},
\begin{equation}\label{nqb2}
\Delta t\left|b\(\bs_h^{m,q}, \bs_h^{m,q},\be_h^q\)-b\(\bu_h^q,\bu_h^q,\be_h^q\)\right| 
\le \frac{\delta_0}{4}\|\be_h^q\|_0^2+\Delta t \frac{\mu}{32}\|\nabla \cdot\be_h^n\|_0^2.
\end{equation}
Inserting \eqref{nqb2} in \eqref{nqb}, applying Young's inequality and standard arguments, it is straightforward to conclude that there exists a constant $C_{\rm f}$ such that
\begin{equation}\label{erroresqf}
\|\be_h^q\|_0^2+\Delta t\nu\|\nabla \be_h^q\|_0^2+\mu\Delta t\|\nabla \cdot\be_h^q\|_0^2\le C_{\rm f}\sum_{n=0}^{q-1}\|\be_h^n\|_0^2
+C_{\rm f}\Delta t \|\btau_1^{q}+\btau_2^{q}\|_{0}^2+C_{\rm f}\Delta t \|\btau_3^q\|_0^2.
\end{equation}
This bound is inserted in  \eqref{estasi}, leading to 
\begin{eqnarray*}
\lefteqn{\|\be_h^M\|_0^2+\nu\Delta t \sum_{n=q+1}^M\|\nabla \be_h^n\|_0^2+\mu\Delta t\sum_{n=q+1}^M\|\nabla \cdot\be_h^n\|_0^2}\\
&\le& C\sum_{n=1}^{q-1}\|\be_h^n\|_0^2
+ C\Delta t c_{4,\bu}\sum_{n=q+1}^M\|\be_h^n\|_0^2+C \Delta t	\sum_{n=q}^M  \|\btau_1^{n}+\btau_2^{n}\|_0^2+C\Delta t	\sum_{n=q}^M \|\btau_3^n\|_0^2. \nonumber
\end{eqnarray*}
Applying now (Gronwall's) Lemma~\ref{gronwall} with $k=\Delta t$ and $\gamma_j=C c_{4,\bu}$, assuming
\begin{equation}\label{deltatul}
\Delta t \le \frac{1}{2Cc_{4,\bu}},
\end{equation}
so that $\Delta t \gamma_j\le 1/2$, and taking into account \eqref{erroresqf}, we reach
\begin{eqnarray}\label{final1}
\lefteqn{\|\be_h^M\|_0^2+\nu\Delta t \sum_{n=q}^M\|\nabla \be_h^n\|_0^2+\mu\Delta t\sum_{n=q}^M\|\nabla \cdot\be_h^n\|_0^2} \nonumber\\
&\le&  C e^{2TCc_{4,\bu}}\left(\sum_{n=0}^{q-1}\|\be_h^n\|_0^2+{\Delta t}\sum_{n=q}^M  \|\btau_1^{n}+\btau_2^{n}\|_0^2+ \Delta t	\sum_{n=q}^M \|\btau_3^n\|_0^2\right).
\end{eqnarray}

{\em Estimation of the terms on the right-hand side of \eqref{final1}.} For the first term on the right-hand side of \eqref{final1} we apply \eqref{inib}. To bound the first truncation error, we argue as in
\eqref{cota_trun_1}--\eqref{er_pro_u}, with bounding $\|\cdot\|_0$ instead of $\|\cdot\|_{-1}$, and applying \eqref{eq:cotanewpro}
to get
\begin{equation}\label{tau1_nor0b}
\sum_{n=q}^M\Delta t \|\btau_1^n\|_{0}^2\le C h^{2k}\Delta t\sum_{n=q}^M\|\bu_t^n\|_k^2
+ C(\Delta t)^{2q}\int_0^T\|\partial_t^{q+1}\bu(s)\|_0^2 \ \di s\le C_{\btau_1}\(h^{2k}+(\Delta t)^{2q}\).
\end{equation}
For the second truncation error we follow \eqref{tau2_nor0} with applying \eqref{eq:cotanewpro} to obtain
\begin{equation}\label{tau2_nor0b}
\sum_{n=q}^M\Delta t \|\btau_2^n\|_{0}^2\le C h^{2k}\Delta t\sum_{n=q}^M\|\bu^n\|_{k+1}^2\le C_{\btau_2}h^{2k}.
\end{equation}
Finally, to bound the third truncation error, we apply \eqref{eq:pressurel2} and \eqref{eq:cotanewpro}, which gives
\begin{equation}\label{tau3_nor0b}
\sum_{n=q}^M\Delta t \|\btau_3^n\|_{0}^2\le C h^{2k}\Delta t\sum_{n=q}^M\(\|p^n\|_{k}^2+\|\bu^n\|_{k+1}^2\)
\le C_{\btau_3}h^{2k}.
\end{equation}
Inserting \eqref{tau1_nor0b}, \eqref{tau2_nor0b}, and \eqref{tau3_nor0b} in \eqref{final1}, we conclude
\begin{equation}\label{final2}
\|\be_h^M\|_0^2+\nu\Delta t \sum_{n=q}^M\|\nabla \be_h^n\|_0^2+\mu\Delta t\sum_{n=q}^M\|\nabla \cdot\be_h^n\|_0^2\le  C_e \(h^{2k}+(\Delta t)^{2q}\),
\end{equation}
with $C_e$ depends on $T$, norms of $\bu$, $\bu_t$, and $p$ but not explicitly on $\nu^{-1}$.
With \eqref{final2}, the triangle inequality, and \eqref{eq:cotanewpro}, the estimate \eqref{eq:velo_estb}
is proved.

{\em Proof of \eqref{cotaero}.}
The derivation of \eqref{final2} used \eqref{cotaero}. We need to prove that for $j=0,\ldots,M$ 
the bound
\begin{equation}\label{cotaero2}
\|\be_h^j\|_\infty\le c_\infty,
\end{equation}
holds.
As we did with \eqref{apriori}, we will prove \eqref{cotaero2} by induction whenever the time step 
and the mesh size are small enough. Let us fix a constant $c_\infty$. Assume that $\Delta t$ and 
$h$ are sufficiently small so that 
\begin{equation}\label{cond_htb}
\begin{array}{rclrcl}
c_{\rm inv} C_{\mathrm{ic}}^{1/2} h^{k-d/2}&\le& \displaystyle \frac{c_\infty}{2}, &  K_1^{1/2}h^{k-d/2}&\le& \displaystyle \frac{c_\infty}{\sqrt{2}}\\[1em]
c_{\rm inv} C_{\mathrm{ic}}^{1/2} h^{-d/2}(\Delta t)^q&\le& \displaystyle \frac{c_\infty}{2},& K_2^{1/2}h^{-d/2}(\Delta t)^q&\le& \displaystyle \frac{c_\infty}{\sqrt{2}},
\end{array}
\end{equation}
where $$K_1=c_{\rm inv}^2 \(C_{\rm f}\max\{C_{\mathrm{ic}},C_e\}q+C_{\rm f}(2C_{\btau_1}+{2}C_{\btau_2}+C_{\btau_3})\right),\ K_2={C_{\rm f}\max\{C_{\mathrm{ic}},C_e\} q+}c_{\rm inv}^2 2C_fC_{\btau_1}.$$
Consider first the initial values. 
For $j=0,\ldots,q-1$, applying the inverse inequality \eqref{inv}, \eqref{inib}, and \eqref{cond_htb}
gives
\[
\|\be_h^j\|_\infty\le c_{\rm inv} h^{-d/2}\|\be_h^j\|_0\le c_{\rm inv} C_{\mathrm{ic}}^{1/2}h^{-d/2}\(h^{k}+(\Delta t)^q\)\le c_\infty,
\]
so that condition \eqref{cotaero2} is satisfied for $n=0,\ldots,q-1$. Assume now \eqref{cotaero2} holds for $j\le n-1$. Then, we can apply the arguments that gave \eqref{final2} for $j\le n-1$ so that 
$$
\|\be_h^j\|_0^2\le C_e \(h^{2k}+(\Delta t)^{2q}\),\quad  j\le n-1.
$$
Combining this inequality and \eqref{inib} with \eqref{erroresqf} for $q=n$ yields
\[
\|\be_h^n\|_0^2\le C_{\rm f}\max\left\{C_{\mathrm{ic}},C_e\right\}q\(h^{2k}+(\Delta t)^{2q}\)
+C_{\rm f}\Delta t \|\btau_1^{n}+\btau_2^{n}\|_{0}^2+C_{\rm f}\Delta t \|\btau_3^n\|_0^2.
\]
Applying the triangle inequality, \eqref{tau1_nor0b}, \eqref{tau2_nor0b}, and \eqref{tau3_nor0b}  leads to 
\[
\|\be_h^n\|_0^2\le C_{\rm f}\max\{C_{\mathrm{ic}},C_e\} q \(h^{2k}+(\Delta t)^{2q}\)
+ C_{\rm f}\(2C_{\btau_1}\(h^{2k}+(\Delta t)^{2q}\)+2C_{\btau_2}h^{2k} + C_{\btau_3}h^{2k}\).
\]
With inverse inequality and \eqref{cond_htb}, we obtain
\begin{eqnarray*}
\|\be_h^n\|_\infty^2&\le& c_{\rm inv}^2 h^{-d}\(C_{\rm f}\max\{C_{\mathrm{ic}},C_e\} q+C_{\rm f}(2C_{\btau_1}+{2}C_{\btau_2}+C_{\btau_3})\right)h^{2k}\\
&& +c_{\rm inv}^2 h^{-d}\({C_{\rm f}\max\{C_{\mathrm{ic}},C_e\} q} +  2C_{\rm f}C_{\btau_1}\)(\Delta t )^{2q}\nonumber\\
&\le& K_1 h^{-d} h^{2k}+K_2 h^{-d} (\Delta t)^{2q}\le c_{\infty},
\end{eqnarray*}
which concludes the proof.
\end{proof}

\begin{remark}\label{re:robu}
Comparing the non-robust and robust cases we can observe that the velocity error in $L^2(\Omega)$ behaves as $h^{k+1}$ in the first case, see \eqref{eq:velo_est},
and as $h^k$ in the second one, compare \eqref{eq:velo_estb}. The rate of convergence $k$ is the best that can be obtained with constants independent of $\nu^{-1}$ for
the Galerkin method with grad-div stabilization, see \cite{review_nos}. 

Concerning the time step restrictions, \eqref{CFL1}, \eqref{res_t}, 
\eqref{restri_delta}, \eqref{deltatul}, and \eqref{cond_htb}, we can summarize them as follows. We require a time step being sufficiently small and that the quantity $h^{-d/2}(\Delta t)^q$ remains bounded, i.e., 
\begin{equation}\label{restri1}
h^{-d/2}(\Delta t)^q\le C,
\end{equation}
as in the non-robust case, see Remark~\ref{remark1}. We also require the stronger condition \eqref{CFL1} that implies
\begin{equation}\label{restri2}
\Delta t h^{-2}\le C.
\end{equation}
This CFL-type condition arises when one proves bounds with constants independent of $\nu^{-1}$ and was already
found in \cite[Remark 3.9]{nos_bdf} for the IMEX-BDF-2 method. 

{An alternative way to derive
a robust bound would be to replace \eqref{CFL1} with the condition  $\Delta t\le C \nu$ for sufficiently 
small $C$. Then, instead of $\frac{4\Delta t}{s_q}c_{1,\bu}\sum_{n=q+1}^M \|\be_h^n\|_0^2$ we get  in 
\eqref{eq:dif_def} the term $c \Delta t \sum_{n=q+1}^M \nu\|\nabla\be_h^n\|_0^2$ with a sufficiently small constant $c$, which can be finally absorbed in the left-hand side and all other steps of the proof 
work as above.} 
\end{remark}

\begin{Theorem}\label{thpre2} {\bf (Error bound for the pressure)} Let the assumptions of Theorem~\ref{thm:velob} be satisfied and assume also the time step satisfies \eqref{cota_t_pre}, then 
\begin{equation}\label{eq:pres_bound_2}
\Delta t \sum_{n=q}^M \|p(t_n)-p_h^n\|_0^2\le C \(h^{2(k-1)}+(\Delta t)^{2q}\),
\end{equation}
where the constant $C$ does not depend on $\nu^{-1}$. 
\end{Theorem}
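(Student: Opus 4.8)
The plan is to follow the structure of the non-robust pressure estimate (Theorem~\ref{thpre1}), but to replace the Stokes projection by the modified one and to exploit the grad-div stabilization so that no inverse power of $\nu$ enters the constants. I would start from the decomposition
\[
p(t_n)-p_h^n=\(p(t_n)-\pi_h^n\)-\(p_h^n-\pi_h^n\),
\]
where $\pi_h^n$ is the $L^2(\Omega)$ projection of $p(t_n)$. The first summand is controlled directly by \eqref{eq:pressurel2}, which gives $\|p(t_n)-\pi_h^n\|_0\le Ch^{k}\|p(t_n)\|_k$, so it contributes at order $h^{k}$ and is not the bottleneck. For the second summand I would derive the full error equation for a general $\bv_h\in V_h$ (not only $\bv_h\in V_h^{\rm div}$), i.e. the analogue of \eqref{fully_error_gd_b} that keeps the pressure term $(p_h^n,\nabla\cdot\bv_h)$; combining the modified Stokes projection equations \eqref{eq:gal_stokes}--\eqref{eq:stokes_rhs_g} with \eqref{eq:bdfq_grad_div} produces on the right-hand side the quantity $(l_h^n+p(t_n)-p_h^n,\nabla\cdot\bv_h)$ together with the already familiar residuals, exactly as \eqref{fully_error_gd} produced the term $p_h^n-p_h^{s,n}$ in the non-robust case.

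Next I would invoke the discrete inf-sup condition \eqref{LBB} with $q_h=p_h^n-\pi_h^n$ and bound the resulting supremum over $\bv_h$ using the error equation. This reduces the estimate of $\|p_h^n-\pi_h^n\|_0$ to bounding, after division by $\|\nabla\bv_h\|_0$: (i) the discrete time derivative $\|\overline\partial_q\be_h^n\|_0$; (ii) $\nu\|\nabla\be_h^n\|_0$; (iii) the nonlinear residual $b(\bs_h^{m,n},\bs_h^{m,n},\bv_h)-b(\bu_h^n,\bu_h^n,\bv_h)$; (iv) the grad-div term $\mu\|\nabla\cdot\be_h^n\|_0$; (v) the projection pressure $\|l_h^n\|_0$ and $\|p(t_n)-\pi_h^n\|_0$; and (vi) the truncation residuals. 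Terms (ii) and (iv) are controlled robustly by the already proved velocity estimate \eqref{final2}, since $\nu\Delta t\sum\|\nabla\be_h^n\|_0^2$ and $\mu\Delta t\sum\|\nabla\cdot\be_h^n\|_0^2$ are of order $h^{2k}+(\Delta t)^{2q}$; term (v) uses $\|l_h^n\|_0\le C\nu h^{k}\|\bu^n\|_{k+1}$ from \eqref{eq:cotanewpropre} and \eqref{eq:pressurel2}. For the nonlinear residual (iii) I would use the skew-symmetry \eqref{eq:skew} together with \eqref{cota_non_3}, \eqref{cota_non_4} and the $\nu$-independent bounds \eqref{cotas_s_h_mod}, \eqref{nabla_sh_mod}, writing the result in terms of $\|\be_h^n\|_0\|\nabla\bv_h\|_0$ rather than $\|\nabla\be_h^n\|_0$, so that $\Delta t\sum\|\be_h^n\|_0^2\le T\,C_e(h^{2k}+(\Delta t)^{2q})$ from \eqref{final2} closes the estimate robustly; the inverse-inequality factors arising in this step are kept bounded by the time-step restriction \eqref{cota_t_pre}, exactly as in the velocity proof.

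The crucial remaining ingredient is a robust bound for $\Delta t\sum_{n=q}^{M}\|\overline\partial_q\be_h^n\|_0^2$. I would obtain it by mimicking the second part of the non-robust proof (\eqref{eq;derit1}--\eqref{eq:prindt2}): test \eqref{fully_error_gd_b} with $\bv_h=\overline\partial_q\be_h^n$, test the equation at level $n-1$ with $-\eta_q\overline\partial_q\be_h^n$, add the two, and apply Lemma~\ref{lema_3_6} combined with Lemma~\ref{lem:mult_tech} simultaneously in the $V$-seminorm (for the $\nu$-term) and in the grad-div seminorm (for the $\mu$-term). The nonlinear contributions and the residuals $\btau_1^n+\btau_2^n$ are handled as in \eqref{aux2}--\eqref{aux4}, producing $\|\nabla\be_h^n\|_0$ and $\|\btau_1^n+\btau_2^n\|_0$ that sum to $h^{2k}+(\Delta t)^{2q}$ through \eqref{final2} and the robust truncation bounds. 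The genuinely new term is $(\btau_3^n,\nabla\cdot\overline\partial_q\be_h^n)$: since $\btau_3^n$ is only controlled in $L^2(\Omega)$ and the left-hand side controls $\|\overline\partial_q\be_h^n\|_0$ but not $\|\nabla\cdot\overline\partial_q\be_h^n\|_0$, I would estimate it through the inverse inequality \eqref{inv} as $\|\btau_3^n\|_0\,c_{\mathrm{inv}}h^{-1}\|\overline\partial_q\be_h^n\|_0$ and absorb the derivative factor, which leaves the term $c_{\mathrm{inv}}^2h^{-2}\|\btau_3^n\|_0^2$. Together with \eqref{tau3_nor0b} this contributes $h^{-2}\cdot h^{2k}=h^{2(k-1)}$, and this is precisely the origin of the reduced spatial order in \eqref{eq:pres_bound_2}.

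Finally I would assemble the pieces: squaring the inf-sup bound, multiplying by $\Delta t$, summing over $n$, and inserting the individual estimates gives $\Delta t\sum_{n=q}^{M}\|p_h^n-\pi_h^n\|_0^2\le C(h^{2(k-1)}+(\Delta t)^{2q})$ with a constant independent of $\nu^{-1}$; the triangle inequality with the $O(h^k)$ projection term then yields \eqref{eq:pres_bound_2}. I expect the main obstacle to be the robust derivation of the $\|\overline\partial_q\be_h^n\|_0$ bound: one must carry the grad-div $G$-stability argument through rather than reusing the $\nu^{-1}$-dependent estimate \eqref{eq:prindt2}, and must track correctly the $h^{-1}$ loss caused by $\btau_3^n$, since that is what determines the final spatial order $h^{k-1}$.
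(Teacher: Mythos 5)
Your proposal follows essentially the same route as the paper's proof: the inf-sup argument with the extra terms $\|\btau_3^n\|_0$, $\|p^n-\pi_h^n\|_0$, $\|l_h^n\|_0$, the $G$-stability estimate for $\Delta t\sum_n\|\overline\partial_q\be_h^n\|_0^2$ carried out simultaneously in the $\nu$- and grad-div seminorms, and the treatment of $(\btau_3^n,\nabla\cdot\overline\partial_q\be_h^n)$ via the inverse inequality, which you correctly identify as the source of the $h^{-2}$ loss and hence of the reduced rate $h^{k-1}$. The only cosmetic deviation is your choice of $\pi_h^n$ rather than $p_h^{s,n}$ in the initial decomposition, and your explicit care to write the nonlinear residual in terms of $\|\be_h^n\|_0$ to avoid a $\nu^{-1}$ there is if anything slightly more detailed than the paper's corresponding remark.
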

\begin{proof}
As in the proof of Theorem~\ref{thpre1}, proving a bound for the pressure starts with the decomposition 
$
p_n-p_h^n =\(p_n-p_h^{s,n}\)  -\(p_h^n-p_h^{s,n}\).
$
Then, instead of \eqref{co_pre1}, one gets, see \cite[Eq.~(42)]{nos_grad_div},
\begin{align}\label{co_pre1b}
\|p_h^n-p_h^{s,n}\|_0 \le& \frac{1}{\beta_0}\left(\|\overline \partial_q\be_h^n\|_{-1}+\nu \|\nabla \be_h^n\|_0+
\sup_{\bv_h\in V_h,\bv_h\neq \boldsymbol 0}\frac{\left|b\(\bs_h^n,\bs_h^n,\bv_h\)-b\(\bu_h^n,\bu_h^n,\bv_h\)\right|}{\|\nabla\bv_h\|_0}\right)\nonumber\\
 &+\frac{1}{\beta_0}\left(\|\btau_1^n+\btau_2^n\|_{-1}+\|\btau_3^n\|_0+\|p^n-\pi_h^n\|_0+\|l_h^n\|_0\right),
\end{align}
where $\btau_1, \btau_2, \btau_3$ come from the error equation \eqref{fully_error_gd_b} and 
$\pi_h^n, l_h^n$ are defined in Section~\ref{sec:notations}.
This relation is squared,  multiplied by $\Delta t$, and then a summation over the time instants is performed.
The bound of the last three extra terms in \eqref{co_pre1b}, compared with \eqref{co_pre1}, is immediate by applying 
\eqref{tau3_nor0b}, \eqref{eq:pressurel2}, and \eqref{eq:cotanewpropre}.

The rest of the proof can be obtained following the steps of the proof of Theorem~\ref{thpre1}. We observe that performing the first step of bounding the temporal derivative of the velocity difference, 
where the G-stability is applied, we arrive, instead of \eqref{eq;derit4}, to 
\begin{eqnarray}\label{eq;derit42}
\lefteqn{\nu \left|\bE^n\right|_{G,1}^2-\nu \left|\bE^{n-1}\right|_{G,1}^2+\mu \left|\bE^n\right|_{G,\tilde 1}^2-\mu \left|\bE^{n-1}\right|_{G,\tilde 1}^2+\Delta t \|\overline \partial_q\be_h^n\|_0^2}\nonumber \\
&\le& \eta_q\Delta t\(\overline \partial_q\be_h^n,\overline \partial_q\be_h^{n-1}\)
 -\Delta t \left[b\(\bs_h^{n},\bs_h^{n},\overline \partial_q\be_h^n\)-b\(\bu_h^{n},\bu_h^{n},\overline \partial_q\be_h^n\)\right]\nonumber\\
&& +\eta_q \Delta t \left[b\(\bs_h^{n-1},\bs_h^{n-1},\overline \partial_q\be_h^n\)- b\(\bu_h^{n-1},\bu_h^{n-1},\overline \partial_q\be_h^n\)\right]
\nonumber\\
&& +\Delta t \(\btau_1^n+\btau_2^n-\eta_q\(\btau_1^{n-1}+\btau_2^{n-1}\),\overline \partial_q\be_h^n\)\nonumber\\
&& +\Delta t \(\btau_3^n-\eta_q \btau_3^{n-1},\nabla \cdot\overline \partial_q\be_h^n\),
\end{eqnarray}
where we have used for the grad-div term the notation
\[
\left|\bV^n\right|_{G,\tilde 1}^2=\sum_{i,j=1}^q g_{i,j}\left(\nabla \cdot v^{n-i+1},\nabla \cdot v^{n-j+1}\right).
\]
Bounding the last term on the right-hand side of \eqref{eq;derit42} with the inverse inequality \eqref{inv}
gives the term $\|\nabla \cdot\overline \partial_q\be_h^n\|_0\le {c_{\mathrm{inv}}}h^{-1}\|\overline \partial_q\be_h^n\|_0$, which leads to the extra
term 
$
C\Delta t \sum_{n=q}^M h^{-2}\|\btau_3^n\|_0^2
$
compared with \eqref{eq:derit6}. This term gives the rate of convergence $k-1$ with respect to 
the mesh width in \eqref{eq:pres_bound_2}.
\end{proof}

\section{Numerical studies}\label{sec:7}


{We performed simulations for the example used in \cite{review_nos} ($\nu\in\{10^{-4}, 10^{-6}, 10^{-8}\}, \mu = 0.01$) and obtained results 
that are of the same form as in Figure~11 of this paper. These results, on the one 
hand, show that higher order BDF methods can be used for problems with small viscosity coefficients. 
But on the other hand, we like to present here simulations that better illustrate the practical
use of BDF-q methods.}
Codes for time integration are usually used in practice with variable step sizes and, in the case of 
BDF methods with variable order. To this end, we used a variable-step size, variable-formula BDF code originally written for numerical tests in~\cite{nos_bdf}. The purpose is to check how the time step restrictions \eqref{restri1}, \eqref{restri2},  and \eqref {cota_t_pre}  affect the step size selection.

The developed code follows standard procedures in the numerical integration of ordinary differential equations.  For the implementation of the methods, we used a strategy as described in~\cite[\S~III.5]{Hairer}.
At each time level $t_n$ (after the second one) the local (time discretization) error is estimated with the quantity
$$
{\rm EST}_n=\frac{\Delta t_n}{t_{n+1}-t_{n-q}} \left\|{\cal U}^{n,q}_h\right\|_0,
\quad \mbox{where}\quad  
{\cal U}^{n,q}_h= \prod_{i=0}^{q-1}\(t_{n+1} -t_{n-i}\) \bu_h[t_{n+1},\ldots,t_{n-q}],
$$
and $\bu_h[t_{n+1},\ldots,t_{n-q}]$ is the standard $(q+1)$-th divided difference based on $t_{n+1},\ldots,t_{n-k}$ and
$\bu_h^{n+1},\ldots, \bu_h^{n-k}$, if the method of order~$q$ is used at~$t_n$.
The estimation ${\rm EST}_n$ is compared with
$$
{\rm TOL}_n ={\rm TOL}_r \left(\max\left\{\left\| \bu_h^{n+1}\right\|_0,\left\| \bu_h^{n}\right\|_0\right\}+0.001\right),
$$
where ${\rm TOL}_r$ is a given tolerance. If ${\rm EST}_n>{\rm TOL}_n$, the computed approximation $\bu_h^{n+1}$ is not considered to be sufficiently accurate and is rejected. It is then recomputed from $t_n$ with a new step length given by
\begin{equation}
\label{eq:change}
\Delta t_{n}^{\rm new} = 0.9\Delta t_n \sqrt[q+1]{\frac{{\rm TOL}_n}{{\rm EST}_n}}.
\end{equation}
On the contrary, if ${\rm EST}_n\le {\rm TOL}_n$, then, $\bu_h^{n+1}$ is accepted and the algorithm proceeds to compute $\bu_h^{n+2}$ with a step length $\Delta t_{n+1}$ given by the right-hand side of~\eqref{eq:change}.

The algorithm starts with~$q=1$ and $\Delta t_0=\Delta t_1=\sqrt{{\rm TOL}_r}/100$. At $t_1$, the first local error estimation~${\rm EST}_1$ is computed. If ${\rm EST}_1> {\rm TOL}_1$ the computation is restarted again from $t_0$ with the step size changed according to~\eqref{eq:change}. From $t_2$ onwards, the estimate corresponding to the BDF-$2$ method can be estimated, and when this estimate is smaller than that corresponding to~$q=1$, the method switches to~$q=2$. From then onwards, estimates for the method of order $q+1$ and~$q-1$ are also computed along with that for the current value of $q$, and the method increases, decreases or maintains the order according to which of the three estimates is the smallest one, but never exceeds a maximum allowed order~$q_{\rm max}$. In the results reported below, we observed that  once the algorithm reached $q_{\rm max}$, that order was maintained, except when $q_{\rm max}=5$ where it fluctuated between different values of~$q$. For this reason, only results for $q_{\rm max}\in\{2,3,4\}$ will be shown below.

We now present results for the well-known benchmark problem defined in~\cite{bench}. The domain is given by
$$
\Omega=(0,2.2)\times(0,0.41)/\left\{ (x,y) \mid (x-0.2)^2 + (y-0.2)^2 \le 0.0025\right\}
$$
and the time interval is~$[0,8]$. The velocity is identical on both vertical sides, prescribed by
$$
\bu(0,y)=\bu(2.2,y)=\frac{6}{0.41^2}\sin\(\frac{\pi t}{8}\)\left(\begin{array}{c} y(0.41-y)\\ 0\end{array}\right).
$$
On the remainder of the boundary $\bu ={\bf 0}$ is set. At~$t=0$, the initial velocity is ${\bu}={\bf 0}$. The viscosity is chosen to be $\nu=10^{-3}$
and the forcing term is $\bff={\bf 0}$.  We used quadratic elements for the velocity and linear ones for the pressure, on the same mesh as in~\cite{nos_bdf},
which has~6624 triangular mesh cells with diameters ranging from $5.53\times 10^{-3}$ to~$3.38\times 10^{-2}$. The number of degrees of freedom is~27168 for the velocity
and~3480 for the pressure. The grad-div parameter was set to~$\mu=0.01$, as suggested by numerical experience in~\cite{proy_gdiv}.

\begin{figure}[h]
\begin{center}
\includegraphics[height=2.6truecm]{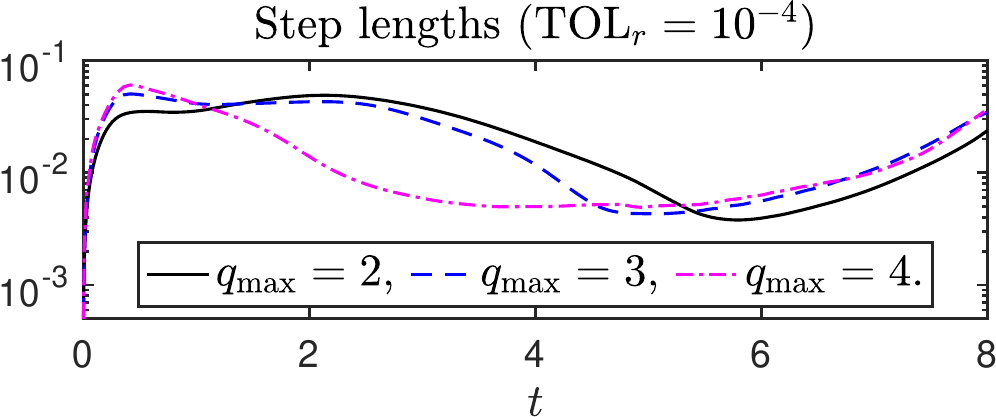} \quad 
\includegraphics[height=2.6truecm]{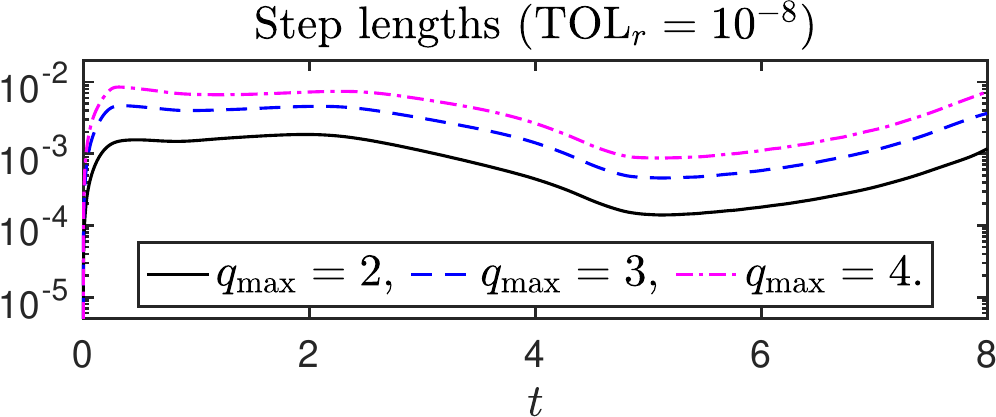}
\caption{\label{fig_hcyl} Step lengths for variable-formula, variable-order code for different values of~$q_{\rm max}$ and tolerances ${\rm TOL}_r=10^{-4}$ (left) and~${\rm TOL}_r=10^{-8}$ (right).}
\end{center}
\end{figure}

In Fig.~\ref{fig_hcyl} we show the step lengths used by the code when $q_{\rm max}$ is set to~$2$, $3$, and~$4$. The left plot corresponds to~${\rm TOL}_r=10^{-4}$, and the right one to~${\rm TOL}_r=10^{-8}$. We see that whereas for~${\rm TOL}_r=10^{-8}$ the step lengths behave as one could expect, that is, the higher the order allowed in the code, the larger the steps are taken, there is an interval in the case of~${\rm TOL}_r=10^{-4}$ where the opposite happens. The explanation is that for~${\rm TOL}_r=10^{-8}$ the step lengths selected by the code are so small that they are well inside the restrictions~\eqref{restri1}, \eqref{restri2},   and \eqref {cota_t_pre}, while with ${\rm TOL}_r=10^{-4}$ the step lengths, being larger, may violate some of these restrictions, resulting in larger errors, which in turn are detected by the estimation~${\rm EST}_n$ that forces the algorithm to reduce the step length accordingly. This phenomenon was also observed in~\cite{nos_bdf} with an IMEX-BDF-$2$  method, which suffers a time step restriction similar to~\eqref{restri2}.

\begin{figure}[h]
\begin{center}
\begin{minipage}{6.5truecm}
\includegraphics[width=6.5truecm]{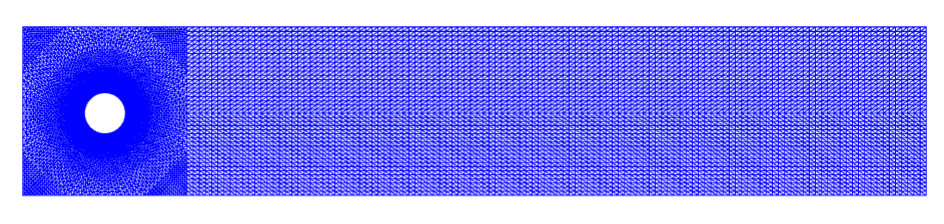}
\end{minipage}
\begin{minipage}{6.5truecm}
\includegraphics[width=6.5truecm]{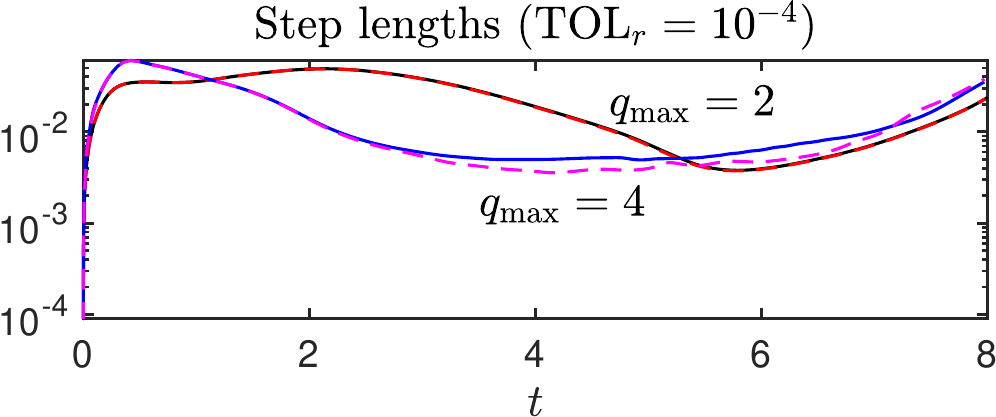}
\end{minipage}

\caption{\label{fig_hcyl2} Left, mesh with 18400 cells. Right, step lengths for~${\rm TOL_r}=10^{-4}$: coarse mesh, $q_{\rm max}=2$ (black continuous line),
$q_{\rm max}=4$ (blue continuos line); fine mesh, $q_{\rm max}=2$ (red discontinuos line), $q_{\rm max}=4$ (magenta discontinuous line).}
\end{center}
\end{figure}

We also checked the influence of the spatial mesh on the step length selection. For this purpose we repeated the computation with the mesh shown in Fig.~\ref{fig_hcyl2} (left plot) which has 18400 cells with diameters ranging from $3.30\times 10^{-3}$ to~$2.09\times 10^{-2}$, so it is finer than the mesh used before. On the right-plot, we show the step lengths on the coarser mesh used before in continuous lines and darker colors and on the finer mesh with discontinuous lines and lighter colors. It can be seen that for BDF-$2$, which does not suffer from time steps restrictions, both lines (continuous and discontinuous) are superimposed, meaning that the step size selection is unaffected by the spatial mesh diameter, whereas for $q_{\max}=4$ slightly smaller steps are taken on the finer mesh in most of the time interval, reflecting the stability restrictions suffered by BDF-$4$. In the case $q_{\rm max}=3$, which is not shown here, the differences between the step lengths on the two meshes were much less pronounced than in the case $q_{\rm max}=4$.

\section{Appendix}

{\bf Proof of Lemma \ref{lema_bosco}} In order to avoid confusion with powers, the time level will be denoted
as a subscript in this proof, in contrast to the remainder of the paper. To prove \eqref{cota_Aesta}, we have to 
study the methods in terms of the variables $D y_{n} = y_{n} - y_{n-1}, \ldots, Dy_{n-q+1} = y_{n-q+1}-y_{n-q}$. 
It is well known that the BDF formula of order~$q$ for $y'=f(t,y)$ can be written as
\begin{equation}\label{bbdfq}
\sum_{j=1}^q \frac{1}{j} D^j y_{n} = \Delta t f_{n},
\end{equation}
where $f_{n} = f(t_{n},y_{n})$.
Since $D^j = D^{j-1} D$ and
$$
D^{j-1} z_{n} = \sum_{k=0}^{j-1}(-1)^k {{j-1}\choose{k}} z_{n-k},
$$
the left-hand side of~\eqref{bbdfq} can be reformulated as
$$
D y_{n}+ \sum_{j=2}^{q} \frac{1}{j}\sum_{k=0}^{j-1} (-1)^k {j-1\choose k} Dy_{n-k}=
D y_{n}+ \sum_{k=0}^{q-1}\gamma_{q,k}Dy_{n-k},
$$
where
\begin{equation}
\label{coefs_bgamma}
\gamma_{q,k} =(-1)^k\( \sum_{j=k+1, {j>1}}^q \frac{1}{j} {j-1\choose k} \), \quad k=0,\ldots, q-1.
\end{equation}
Table~\ref{table_bgammas} shows the values of the coefficients $\gamma_{q,k}$.
Thus, the BDF formula of order~$q$ can be written as
\begin{equation*}
Dy_{n} + \sum_{k=0}^{q-1} \gamma_{q,k} Dy_{n-k} = \Delta t f_n.
\end{equation*}

\begin{table}[t!]
\begin{center}
\caption{\label{table_bgammas} Coefficients $\gamma_{q,k}$ in~\eqref{coefs_bgamma}}
$$
\begin{array}{|c|c|c|c|c|c|c|}
 \hline
q\backslash k & 0 & 1& 2& 3& 4\\  \hline
3 & 5/6 & -7/6 & 1/3 & & \\  \hline
4 & 13/12 & -23/12 & 13/12 & -1/4 & \\  \hline
5 & 77/60 & -163/60 & 137/60& -63/60 & 1/5\\  \hline
\end{array}
$$
\end{center}
\end{table}

We study now the $G$-stability of the polynomials
$
x^{q-1} + p_q(x)$, $q\in\{3,4,5\}$, where
$$
p_q(x)=\gamma_{q,0}x^{q-1} + \gamma_{q,1} x^{q-2} + \cdots + \gamma_{q,q-2} x + \gamma_{q,q-1},
$$
or to be more precise, for $\mu_q(x)= x^{q-1} $,
we compute
$$
\sigma_q=-\min_{\theta\in[-\pi,\pi]}\hbox{Re}\left( \frac{p_q\left(e^{i\theta}\right)}{\mu_q\left(e^{i\theta}\right)}\right) = -\min_{\theta\in[-\pi,\pi]}\left( \gamma_{q,0} + \gamma_{q,1}\cos(\theta) + \cdots+ \gamma_{q,q-1} \cos((q-1)\theta)\right).
$$
If $\sigma_q<1$, then, we may write the polynomial $x^{q-1}+p_q(x)$ as the sum $(1-\sigma_q)x^{q-1}$ and~$\sigma_qx^{q-1}+ p_q(x)$, so that 
\[
\Delta t \(\overline \partial_q y_n,D y_n\)_H = \(1-\sigma_q\)\left\|D y_n\right\|_{{H}}^2
+ \(\sigma_q Dy_{n} + \sum_{k=0}^{q-1} \gamma_{q,k} Dy_{n-k},Dy_{n}\).
\]
Considering the method on the right-hand side of this relation, one obtains for its 
root locus curve that 
$$
\min_{\theta\in[-\pi,\pi]}\hbox{Re}\(\frac{\sigma_q \(e^{i\theta}\)^{q-1} + p_q(e^{i\theta})}{\mu_q(e^{i\theta})}\) =
\min_{\theta\in[-\pi,\pi]}\hbox{Re}\(\sigma_q+\frac{p_q(e^{i\theta})}{\mu_q(e^{i\theta})}\) \ge 0.
$$
Hence the method is $A$-stable and 
since $A$-stability is equivalent to $G$-stability, see \cite{dalquis2}, the above inequality implies that there exist a symmetric and positive definite matrix~$G_q=(g_{q,i,j})_{1\le i,j\le q-1}$ satisfying \eqref{cota_Aesta} for $s_q=1-\sigma_q$.
To conclude, we compute $\sigma_q$ for $q\in\{3,4,5\}$.

For $q=3$, we have 
$$
\hbox{Re}\( \frac{p_3(e^{i\theta})}{\mu_3(e^{i\theta)}}\) = \frac{5}{6} - \frac{7}{6} \cos(\theta) + \frac{2}{6}\(2\cos^2(\theta) -1\)=
 \frac{3}{6} - \frac{7}{6} \cos(\theta) + \frac{4}{6}\cos^2(\theta).
 $$
 The polynomial $3 -7x +4 x^2$ achieves its minimum at $x=\nicefrac78$,
 so that
 \[
- \sigma_3 = \min_{\theta\in[-\pi,\pi]}\hbox{Re}\( \frac{p_3(e^{i\theta})}{\mu_3(e^{i\theta)}}\) = \frac{3}{6} - \frac{7}{6} \cdot\frac{7}{8} + \frac{4}{6}\cdot \frac{49}{64}=-\frac{1}{96}.
\]
 
If $q=4$, one finds that 
\[
 \hbox{Re}\( \frac{p_4(e^{i\theta})}{\mu_4(e^{i\theta)}}\)  = -\frac{7}{6}\cos(\theta)+ \frac{13}{6}\cos^2(\theta)-\cos^3(\theta).
\]
The polynomial $-7x +13x^2 - 6x^3$ has its minimum when $x\in[-1,1]$ at $x=(13-\sqrt{43})/18$, so that
$
 \sigma_4 = \nicefrac{(260+43\sqrt{43})}{2916}.
$
It is easy to check that, rounded up to five significant digits, $\sigma_4=0.1859$, and consequently,
$
\sigma_4 < \nicefrac{3}{16} < \nicefrac{1}{5}.
$

Finally, for $q=5$, one can prove that
\[
 \hbox{Re}\( \frac{p_5(e^{i\theta})}{\mu_5(e^{i\theta)}}\)= -\frac{24}{30} +\frac{13}{30}\cos(\theta) + \frac{89}{30}\cos^2(\theta)-\frac{126}{30}\cos^3(\theta) +\frac{48}{30}\cos^4(\theta).
\]
 In order to find the minimum for $x\in[-1,1]$ of the polynomial $-24+13x + 89x^2 - 126x^3 +48x^4$, we look for the zeros of its derivative,
 $13 +178x -378x^2 +192 x^3$. We find the minimum for $x= -0.0640410$ rounded to 6 significant digits, so that it is in~the interval~$[-1,1]$. With this value of $x$ on can compute
$\sigma_5$, resulting in
$\sigma_5=0.814454$ rounded to six significant digits. We notice that
$
\sigma_5< \nicefrac{9}{11}.
$

\end{document}